\documentclass[11pt]{article}
\usepackage[a4paper,margin=31mm]{geometry}
\usepackage{amsmath,amssymb,amsthm,mathtools,needspace}
\usepackage[colorlinks=true,linkcolor=blue,citecolor=blue,urlcolor=blue]{hyperref}
\hypersetup{
  pdftitle={Gevrey localization for simple and distinct zeros in a prime-modulus Dirichlet family},
  pdfauthor={Zhixu Hua and Xiufan Yang}
}
\newtheorem{theorem}{Theorem}[section]
\newtheorem{proposition}[theorem]{Proposition}
\newtheorem{lemma}[theorem]{Lemma}
\newtheorem{remark}[theorem]{Remark}
\newtheorem{corollary}[theorem]{Corollary}
\newtheorem{definition}[theorem]{Definition}
\newcommand{\F}{\mathcal F}
\newcommand{\Nfam}{\mathcal N}
\newcommand{\tr}{\operatorname{tr}}
\newcommand{\dist}{\operatorname{dist}}

\title{Gevrey localization for simple and distinct zeros in a prime-modulus Dirichlet family}
\author{%
Zhixu Hua\thanks{Changkong College, Nanjing University of Aeronautics and Astronautics, Nanjing, Jiangsu, China. Email: \href{mailto:hzx001@nuaa.edu.cn}{\nolinkurl{hzx001@nuaa.edu.cn}}.}
\and
Xiufan Yang\thanks{Corresponding author. School of Science, Nanjing University of Posts and Telecommunications, Nanjing, Jiangsu, China. Email: \href{mailto:B25100020@njupt.edu.cn}{\nolinkurl{B25100020@njupt.edu.cn}}.}
}
\date{}
\begin{document}
\maketitle

\noindent\textit{2020 Mathematics Subject Classification.}
Primary 11M26; Secondary 11M06, 15A42.

\noindent\textit{Keywords.} Dirichlet $L$-functions, simple zeros, distinct
zeros, Gevrey localization, nuclear norm, explicit formula, character average,
inertia.

\begin{abstract}
Let $q$ tend to infinity through odd primes, let $T=T(q)$, and put
$\ell=\log(qT/2\pi)$.  Suppose that $\ell^s=o(T)$ for some fixed $s>1$ and
that
\[
 \lambda_{\rm bw}:=\min\left\{1,
 \liminf_{\substack{q\to\infty\\q\ \mathrm{prime}}}
 \frac{\log(q-1)}{\ell}\right\}>0.
\]
For the unweighted family of the $q-2$ nonprincipal characters modulo $q$,
we prove unconditional lower bounds, relative to the total zero multiplicity
in $(T,2T]$, of $2-1/c_{\lambda_{\rm bw}}^*$ for both simple and distinct
critical-line zeros and of
$\tfrac12(3-1/c_{\lambda_{\rm bw}}^*)$ for all distinct zeros, where
\[
 c_\lambda^*=\frac{\sqrt2\tan(\lambda/\sqrt2)}
 {1+(\lambda/\sqrt2)\tan(\lambda/\sqrt2)}.
\]
When $\log T=o(\log q)$, this gives respectively
$0.6725007036\ldots$ and $0.8362503518\ldots$; in particular it covers every
fixed $T=(\log q)^A$ with $A>1$.

As the growing-height companion to the authors' public mesoscopic
shrinking-height theorem, this article is organized around the different
localization technology required when $T$ grows.  An abstract theorem converts
complex-strip Gevrey decay, sampling density, and a local multiset count into
stretched-exponential localization in trace and nuclear norm.  In the
Dirichlet specialization the strip growth is $X^{1/4}$, where
$X=\exp(\lambda\ell)$, and a buffer $D_0=(K\ell)^s$ absorbs the remote-zero
contribution uniformly in the character.  Quantitative finite-sampling
end-effect bounds, a finite explicit-formula matrix, character-averaged first
and second traces, hyperbolic functional-equation blocks, and a rank--trace
inequality then yield the three zero statistics.  The shared finite-inertia
mechanism and the Montgomery--Taylor constant are not claimed as new.  No
form of GRH is assumed.
\end{abstract}

\section{Introduction}
Let $q$ be an odd prime and let $\F_q$ be the $q-2$ nonprincipal characters
modulo $q$.  We study zeros with ordinate in $I=(T,2T]$ after summing over
$\F_q$, with the total multiplicity as denominator.  The three numerators
count simple critical-line zeros, distinct critical-line zeros, and distinct
zeros without a location restriction.  These are genuinely different
statistics, and the family ratio used here is not an average of percentages
attached to individual characters.

Writing $\ell=\log(qT/2\pi)$, the theorem applies whenever a fixed Gevrey
order $s>1$ satisfies $\ell^s=o(T)$ and the prime length
$X=\exp(\lambda\ell)$ remains below the character-orthogonality threshold
for every fixed $\lambda<\lambda_{\rm bw}$.  The precise bandwidth ceiling
is given in Definition~\ref{def:admissible-height}.  In the full-bandwidth
case $\log T=o(\log q)$, Theorem~\ref{thm:main} gives
\[
 \frac{\Nfam_0^s}{\Nfam},\ \frac{\Nfam_0^*}{\Nfam}
 \ \ge\ \frac32-\frac1{\sqrt2}\cot\frac1{\sqrt2}-o(1),
 \qquad
 \frac{\Nfam_d}{\Nfam}
 \ \ge\ \frac12\left(\frac52-\frac1{\sqrt2}\cot\frac1{\sqrt2}\right)-o(1).
\]
Thus the limiting constants are $0.6725007036\ldots$ and
$0.8362503518\ldots$.  The theorem also retains the reduced-bandwidth
constant when $T$ grows polynomially with $q$.

The main analytic issue is uniform localization of the zero side of the
explicit formula.  A compactly supported test of bandwidth $L=\lambda\ell$
acquires a factor $e^{L/4}=X^{1/4}$ when evaluated throughout the strip
occupied by nontrivial zero ordinates.  Polynomial Fourier decay cannot
absorb this growth on a conductor-logarithmic buffer.  Section~\ref{sec:gevrey} therefore isolates two reusable statements:
an abstract Gevrey strip-decay proposition and an abstract nuclear-localization
theorem.  Their inputs are only support and derivative bounds, strip growth,
sampling density, a local multiset count, and a buffer scale.  For Dirichlet
zeros the choice $D_0=(K\ell)^s$ gives an arbitrarily small normalized
nuclear tail, uniformly in $\chi$, while admissibility ensures $D_0=o(T)$.
This localization theorem, rather than the common finite-matrix algebra, is
the organizing interface of the paper.

After localization, the proof uses a concise self-contained version of the
finite-inertia mechanism.  Weil's explicit formula is sampled at critical
density to produce a real-symmetric matrix $G_\chi$.  On the prime side,
character orthogonality evaluates the first two family traces; same-sign
reciprocal congruences and finite-sampling endpoints are lower order.  On the
zero side, a simple critical-line zero contributes a positive rank-one form,
while an off-line functional-equation pair contributes a hyperbolic block.
A rank--trace inequality converts these structural facts and the two trace
asymptotics into zero counts.  The final one-dimensional optimization is the
Montgomery--Taylor cosine problem
\cite{Montgomery1973,Montgomery1975,CCLM}.

The averaging and denominator distinguish the result from conductor-averaged
or individual-height theorems such as
\cite{CIS,Wu2016,Wu2019,Sono,Dickinson}.  The public shrinking-height paper
\cite{HuaYangShrinking2026} treats the disjoint mesoscopic regime
$T=(\log q)^{-\alpha}$, $0<\alpha<1$.  The two articles share the
explicit-formula finite compression, off-line hyperbolic blocks, first and
second family traces, rank--trace extraction, and Montgomery--Taylor
optimization.  Their regime-specific analytic modules are different.  The
public paper uses Hiary--Zhao low-height inputs, exceptional-character
deletion, shrinking-scale Gabor compression, and a complex zero-density tail.
This growing-height companion instead uses a Gevrey endpoint taper,
complex-strip decay, the abstract nuclear-localization theorem, growing-height
admissibility, and quantitative finite-sampling end-effect estimates.  The
shared matrix material is retained only to keep the proof logically
self-contained.

Neither article claims the shared finite-inertia mechanism as original.
The corresponding zeta-function theorem and fixed primitive Dirichlet
height-aspect extension appear in \cite[Theorems~A and~B]{claudeCurrent}.
The Montgomery--Taylor variational constant is classical
\cite{Montgomery1973,Montgomery1975,CCLM}.  An earlier version suggested that
character averaging might permit polylogarithmic height with a different
Gevrey taper, without carrying out that argument
\cite[Remark~7.2(iii)]{claudeHistorical}.  The present article proves the
one-prime-modulus family result and the abstract localization interfaces;
neither cited manuscript is a logical input.

The statement is a family theorem, not a characterwise theorem.  It does not
cover the nonlocalizable endpoint $T=\ell^{1+o(1)}$, composite moduli, or
higher moments, and it assumes no form of GRH.  Section~\ref{sec:notation}
records the uniform classical estimates; Section~\ref{sec:EF} constructs the
matrix; Section~\ref{sec:gevrey} proves strip decay and nuclear localization; Sections~\ref{sec:average} and \ref{sec:traces}
evaluate the two family traces; and Section~\ref{sec:zeroside} completes the
finite-dimensional extraction.

\section{Main result and order of limits}
\begin{definition}[Gevrey-admissible height]\label{def:admissible-height}
Let $T=T(q)$ be positive and put
\[
 \ell=\log\frac{qT}{2\pi},\qquad
 \lambda_{\rm bw}(T):=\min\left\{1,
 \liminf_{\substack{q\to\infty\\q\ \mathrm{prime}}}
 \frac{\log(q-1)}{\ell}\right\}.
\]
We call $T$ \emph{Gevrey-admissible} if $\lambda_{\rm bw}(T)>0$ and there
is a fixed $s>1$ such that
\begin{equation}\label{eq:height-localizable}
 \ell^s=o(T).
\end{equation}
It is \emph{full-bandwidth admissible} if moreover
$\lambda_{\rm bw}(T)=1$.  Under \eqref{eq:height-localizable}, this is
equivalent to $\log T=o(\log q)$.  In particular, the two transparent
conditions
\[
 \log T=o(\log q),\qquad (\log q)^{1+\delta}=o(T)
\]
for some fixed $\delta>0$ imply full-bandwidth admissibility.
\end{definition}
Indeed, \eqref{eq:height-localizable} forces $T\to\infty$, and
$\ell=\log q+\log T+O(1)$.  Thus the defining ratio tends to one exactly
when $\log T=o(\log q)$.  Under the two sufficient conditions, choose any
fixed $s$ with $1<s<1+\delta$.

For an odd prime $q$, let $\F_q$ be the $q-2$ nonprincipal characters modulo
$q$; all are primitive.  Let $N_\chi(T,2T)$ count nontrivial zeros of
$L(s,\chi)$ with ordinate in $(T,2T]$, with multiplicity.  Let
$N^s_{0,\chi}(T,2T)$ and $N^*_{0,\chi}(T,2T)$ count, respectively, simple
and distinct critical-line zeros in the same interval, and let
$N_{d,\chi}(T,2T)$ count distinct nontrivial zeros there without restriction
to the critical line.  Put
\[
 \Nfam(q,T)=\sum_{\chi\in\F_q}N_\chi(T,2T),\qquad
 \Nfam_0^s(q,T)=\sum_{\chi\in\F_q}N^s_{0,\chi}(T,2T),
\]
\[
 \Nfam_0^*(q,T)=\sum_{\chi\in\F_q}N^*_{0,\chi}(T,2T),\qquad
 \Nfam_d(q,T)=\sum_{\chi\in\F_q}N_{d,\chi}(T,2T).
\]
For $0<\lambda\le1$, define
\begin{equation}\label{eq:clambda-star-main}
 c_\lambda^*:=\frac{\sqrt2\tan(\lambda/\sqrt2)}
 {1+(\lambda/\sqrt2)\tan(\lambda/\sqrt2)}.
\end{equation}

\Needspace{0.30\textheight}
\begin{theorem}[Admissible-height theorem]\label{thm:main}
Let $T=T(q)$ be Gevrey-admissible and put
$\lambda_*=\lambda_{\rm bw}(T)$.  As $q\to\infty$ through odd primes,
\begin{enumerate}
\item[\textnormal{(i)}]
\[
 \liminf_{\substack{q\to\infty\\q\ \text{prime}}}
 \frac{\Nfam_0^s(q,T)}{\Nfam(q,T)}\ge
 2-\frac1{c_{\lambda_*}^*};
\]
\item[\textnormal{(ii)}]
\[
 \liminf_{\substack{q\to\infty\\q\ \text{prime}}}
 \frac{\Nfam_0^*(q,T)}{\Nfam(q,T)}\ge
 2-\frac1{c_{\lambda_*}^*};
\]
\item[\textnormal{(iii)}]
\[
 \liminf_{\substack{q\to\infty\\q\ \text{prime}}}
 \frac{\Nfam_d(q,T)}{\Nfam(q,T)}\ge
 \frac12\left(3-\frac1{c_{\lambda_*}^*}\right).
\]
\end{enumerate}
\end{theorem}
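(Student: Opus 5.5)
The proof runs through the finite-inertia mechanism outlined in the introduction. I fix $\lambda<\lambda_*$, a Gevrey order $s$ as in \eqref{eq:height-localizable}, and a large constant $K$; at the end I let $K\to\infty$ and $\lambda\uparrow\lambda_*$. Since $c_\lambda^*$ is continuous in $\lambda$, the stated constants follow.

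For each $\chi\in\F_q$ I take $L=\lambda\ell$ and an even test function $\phi$ supported in $[-L,L]$. It is the Montgomery--Taylor cosine profile multiplied by a Gevrey endpoint taper, so that $\hat\phi$ decays like $\exp(-c|t|^{1/s})$ throughout the strip $|\Im t|\le 1/2$, with the loss $e^{L/4}=X^{1/4}$. I sample the explicit formula at critical density on $(T-D_0,2T+D_0]$ with buffer $D_0=(K\ell)^s$. This gives a real symmetric matrix $G_\chi$ of size about $N_\chi(T,2T)$. On the zero side, $G_\chi$ splits as a sum over zeros. A simple critical-line zero contributes a positive rank-one form. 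A multiple critical-line zero contributes $m\,vv^{*}$, which has rank one. An off-line pair $\rho,1-\bar\rho$ contributes a block with one positive and one negative direction.

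The first main step is localization. I apply the abstract Gevrey strip-decay proposition and the nuclear-localization theorem of Section~\ref{sec:gevrey}, using the local zero count $N_\chi(t+1)-N_\chi(t)\ll\ell$. This shows that zeros outside the buffered window change $G_\chi$ by nuclear norm $\ll N_\chi\exp(-cK)$. The bound is uniform in $\chi$, because the stretched exponential at distance $D_0$ beats $X^{1/4}\ell$. Admissibility gives $D_0=o(T)$, so the buffer costs only $o(\Nfam)$ zeros.

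The second main step evaluates the traces. Summing over $\F_q$ and using character orthogonality, valid because $X<q-1$ for $\lambda<\lambda_{\rm bw}$, I would show the following. First, $\sum_\chi\tr G_\chi$ equals the family zero count times $\phi(0)$, up to $o(\Nfam)$. Second, $\sum_\chi\tr G_\chi^2$ equals $\Nfam$ times the Montgomery pair-correlation-type quadratic form in $\phi$, again up to $o(\Nfam)$. In the second trace, the diagonal $p^a\equiv p'^b$ terms give the main term, and same-sign reciprocal congruences contribute $O(X^2/q)$. The finite-sampling end effects are handled by the quantitative endpoint bounds.

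The final step is the rank--trace inequality. For a symmetric matrix, $(\tr A_+)^2\le\operatorname{rank}(A_+)\,\tr A_+^2$. Applied to the localized $G_\chi$, summed over $\chi$, and combined with Cauchy--Schwarz, it bounds from below the number of positive rank-one contributions. Hyperbolic blocks contribute negative trace mass, which is what separates critical-line zeros from off-line ones. The resulting count is $\ge(2-1/c)\Nfam$ for zeros carrying positive rank. Here $c$ is the trace ratio, and optimizing over $\phi$ gives $c_\lambda^*$. Multiple zeros occupy a single rank, and this gives (i) and (ii). For (iii), I count distinct zeros without a location restriction. Each off-line pair has rank at least two, so it is counted fully, and each multiple zero is counted once. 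This gives the averaged bound $\frac12(3-1/c)$.

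The hardest step is the localization. The bound must be uniform in $\chi$ under only the mild growth $\ell^s=o(T)$. So the Gevrey decay at distance $D_0$ must beat both $X^{1/4}$ and the local multiplicity, and simultaneously the end-effect error must stay $o(\Nfam)$. I would try $D_0=(K\ell)^s$ with the taper order tuned to $s$ first.
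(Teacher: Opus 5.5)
Your architecture matches the paper's: a Gevrey-tapered Montgomery--Taylor window, buffered localization, and character-averaged first and second traces. The gap is the finite-dimensional extraction, which as you state it does not produce the claimed constants.

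Combine $(\tr A_+)^2\le\operatorname{rank}(A_+)\tr A_+^2$ over $\chi$ by Cauchy--Schwarz, using $\sum_\chi\tr\widehat G_\chi\sim\Nfam$ and $\sum_\chi\|\widehat G_\chi\|_F^2\sim\Nfam/c$. This gives only $\sum_\chi n_+(\widehat A_\chi)\ge(c-o(1))\Nfam$, with $c=c_\lambda^*$. Each critical-line zero, of any multiplicity, and each off-line pair supplies at most one positive direction. So the bound is $s_1+s_2+p\ge cN$, and together with $N\ge s_1+2s_2+2p$ it gives $s_1\ge(2c-1)N$.

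For $\tfrac12<c<1$ these are strictly weaker than the theorem:
$(2-1/c)-(2c-1)=(1-c)(2c-1)/c$ and $\tfrac12(3-1/c)-c=(1-c)(2c-1)/(2c)$. At full bandwidth $c_1^*\approx0.7533$, so you would get about $0.5066$ and $0.7533$ instead of $0.6725$ and $0.8363$. A rank count is blind to multiplicity. Your remark that ``multiple zeros occupy a single rank'' is exactly why rank counting cannot give (i) with $2-1/c$.

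Your description of off-line pairs is also wrong. For real even $\phi$, the pair $\{\rho,1-\bar\rho\}$ contributes trace $2m_\rho\Re\sum_k\widehat\phi(\gamma_\rho-\tau_k)^2$, roughly $2m_\rho$ after normalization. Its trace is therefore positive, not negative. Its defect is the signature $(1,1)$: one positive direction carrying two units of trace.

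The missing idea is the paper's rank--trace inequality. If $P\succeq0$ has rank at most $r$ and $n_+(Q)\le b$, then $\|P+Q\|_F^2\ge2\tr P-r+4\tr Q-4b$. This comes from applying $x^2\ge2x-1$ to the differences of ordered eigenvalues of $P$ and $Q_-$ (after a trace rearrangement), and $x^2\ge4x-4$ to the at most $b$ positive eigenvalues of $Q$.

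The paper takes $P$ to be the simple-zero part. It has rank at most $s_1$ and, crucially, $\tr P\le s_1$, by the sampling identity $\sum_k\widehat\phi(t-\tau_k)^2=aL^2$. $Q$ is the rest, with $n_+(Q)\le s_2+p$. This yields $3s_1+4s_2+4p\ge4\tr\widehat A_\chi-\|\widehat A_\chi\|_F^2$. Then $N\ge s_1+2s_2+2p$ gives $2-1/c$ for (i)--(ii) and $\tfrac12(3-1/c)$ for (iii).

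Your localization step also has normalization slips, which are repairable:
\begin{itemize}
\item The window must be supported in $[-L/2,L/2]$. Support in $[-L,L]$ brings primes up to $X^2$ into the explicit formula and makes the strip loss $X^{1/2}$ per evaluation.
\item Sampling must be on $[T,2T]$, discarding only zeros at distance at least $D_0$. Zeros just outside a sampling set contribute $O(1)$ each.
\item Each zero enters as $u_zu_z^T$, so the stretched exponential must beat $X^{1/2}$.
\item The normalized nuclear bound must be a negative power of $qT$ (take $cK>\lambda/2+B+1$), not something like $N_\chi e^{-cK}$. The Frobenius perturbation $\varepsilon(2\|\widehat G_\chi\|_F+\varepsilon)$ contains $\varepsilon^2$, which for fixed $K$ would not be $o(N_\chi)$.
\end{itemize}
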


\begin{corollary}[Full-bandwidth and polylogarithmic heights]\label{cor:polylog}
If $T$ is full-bandwidth admissible, then the first two lower bounds in
Theorem~\ref{thm:main} equal
\[
 C_{\rm MT}:=\frac32-\frac1{\sqrt2}\cot\frac1{\sqrt2}
 =0.6725007036794116\ldots,
\]
and the third equals
\[
 \frac{1+C_{\rm MT}}2=0.8362503518397058\ldots.
\]
In particular this holds for every fixed $T=(\log q)^A$ with $A>1$.
\end{corollary}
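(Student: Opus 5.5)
The corollary follows from Theorem~\ref{thm:main} by evaluating $c_1^*$ and checking admissibility. I will not reprove the analytic content.

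\emph{Step 1: full bandwidth.} Full-bandwidth admissibility means $\lambda_*=\lambda_{\rm bw}(T)=1$. Substituting $\lambda=1$ into \eqref{eq:clambda-star-main}, write $u=1/\sqrt2$ and $t=\tan u$. Then
\[
 c_1^*=\frac{\sqrt2\,t}{1+ut},\qquad
 \frac1{c_1^*}=\frac{1+ut}{\sqrt2\,t}=\frac{u}{t}+\frac{u}{\sqrt2}
 =\frac1{\sqrt2}\cot\frac1{\sqrt2}+\frac12,
\]
using $1/\sqrt2=u$ and $u/\sqrt2=1/2$. Hence
\[
 2-\frac1{c_1^*}=\frac32-\frac1{\sqrt2}\cot\frac1{\sqrt2}=C_{\rm MT}.
\]
For the third bound,
\[
 \frac12\Bigl(3-\frac1{c_1^*}\Bigr)=\frac12\bigl(1+(2-1/c_1^*)\bigr)=\frac{1+C_{\rm MT}}2.
\]
The decimal values follow from evaluating $\cot(0.70710678\ldots)$; I would only verify a few digits to confirm there is no sign slip.

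\emph{Step 2: polylogarithmic heights.} Fix $A>1$ and take $T=(\log q)^A$. Then $\log T=A\log\log q=o(\log q)$. Also $(\log q)^{1+\delta}=o(T)$ for any fixed $\delta$ with $0<\delta<A-1$. By the two transparent sufficient conditions in Definition~\ref{def:admissible-height}, $T$ is therefore full-bandwidth admissible. Concretely, choose $1<s<A$. Since $\ell=\log q+A\log\log q+O(1)\sim\log q$, we get $\ell^s\sim(\log q)^s=o\bigl((\log q)^A\bigr)$, and $\log(q-1)/\ell\to1$. Step~1 then applies.

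There is no real obstacle here. The only point needing care is the trigonometric simplification of $1/c_1^*$, namely that the $ut$ term produces exactly the additive $1/2$.
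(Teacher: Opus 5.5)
Your proposal is correct and follows the paper's route. The paper also proves the corollary by directly evaluating $2-1/c_1^*$, which rests on your identity $1/c_1^*=\tfrac12+\tfrac1{\sqrt2}\cot\tfrac1{\sqrt2}$. It checks full-bandwidth admissibility of $T=(\log q)^A$ through the sufficient conditions $\log T=o(\log q)$ and $(\log q)^{1+\delta}=o(T)$ with $1<s<1+\delta$, which matches your choice $1<s<A$.
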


\begin{remark}[Examples beyond fixed polylogarithmic height]\label{rem:height-examples}
If $T=q^{o(1)}$ and $\ell^s=o(T)$ for some fixed $s>1$, then
$\lambda_{\rm bw}(T)=1$.  This includes, for example,
$T=\exp\{(\log q)^\beta\}$ with fixed $0<\beta<1$.  If instead
$T=q^{\theta+o(1)}$ with fixed $\theta>0$, then
$\lambda_{\rm bw}(T)=(1+\theta)^{-1}$, so
Theorem~\ref{thm:main} remains valid with the corresponding reduced-bandwidth
constants.  Thus the theorem is not restricted to polylogarithmic height;
the full Montgomery--Taylor constants require the subpower bandwidth
condition.
\end{remark}

The theorem is an unweighted statement about the combined zero multiset of
all nonprincipal characters modulo one prime.  It does not assert any of the
three proportions character by character.  If a displayed lower bound is
negative for a small bandwidth ceiling, nonnegativity gives the corresponding
trivial improvement to zero.

For later reference, the relative errors in the two trace formulae are
\begin{equation}\label{eq:error-ledger}
 \frac1\ell,\qquad \frac{\log L}{TL},\qquad
 \frac{\log L\log(2T)}T,\qquad \frac1T,\qquad
 \frac{X}{qT\ell},\qquad \frac{\sqrt X}{qT\ell}.
\end{equation}

\begin{lemma}[Admissible-height margins]\label{lem:height-margins}
Let $T$ be Gevrey-admissible, choose an admissibility order $s>1$, and fix
$0<\lambda<\lambda_{\rm bw}(T)$.  Set
\[
 L=\lambda\ell,\qquad X=e^L,
 \qquad D_0=(K\ell)^s
\]
with fixed $K>0$.  Then, for some $\eta=\eta(\lambda,T)>0$ and all
sufficiently large $q$,
\begin{equation}\label{eq:height-margins}
 \frac{X}{q-1}\le e^{-\eta\ell},\qquad
 \frac{D_0}{T}=o(1),\qquad
 \frac{\log L\log(2T)}{T}=o(1).
\end{equation}
Moreover every relative error in \eqref{eq:error-ledger} tends to zero.
\end{lemma}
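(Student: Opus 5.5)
The plan is to verify each margin directly from the definitions, using two facts from Definition~\ref{def:admissible-height} and the remark after it. First, \eqref{eq:height-localizable} forces $T\to\infty$. Second, $\ell=\log q+\log T+O(1)\to\infty$.

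\emph{The bandwidth margin $X/(q-1)\le e^{-\eta\ell}$.} Since $\lambda<\lambda_{\rm bw}(T)\le\liminf\log(q-1)/\ell$, pick $\eta=\tfrac12(\lambda_{\rm bw}(T)-\lambda)>0$. For all sufficiently large prime $q$ we then have $\log(q-1)\ge(\lambda+\eta)\ell$, so
\[
 X/(q-1)=\exp\bigl(\lambda\ell-\log(q-1)\bigr)\le e^{-\eta\ell}.
\]
The only care needed is that the $\liminf$ is taken over primes, which is exactly the sequence along which $q$ runs.

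\emph{The buffer margin.} We have $D_0/T=K^s\ell^s/T$, and $K$, $s$ are fixed, so this is $o(1)$ by \eqref{eq:height-localizable}.

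\emph{The margin $\log L\log(2T)/T$.} Here $\log L=\log\lambda+\log\ell=O(\log\ell)$ and $\log(2T)\le\ell+O(1)$. Thus the quantity is $O(\ell\log\ell/T)$. Since $s>1$, we have $\ell\log\ell\le\ell^s$ for large $\ell$, and the bound is $o(1)$.

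\emph{The remaining entries of \eqref{eq:error-ledger}.}
\begin{itemize}
\item[] $1/\ell\to0$ and $1/T\to0$ because $\ell,T\to\infty$.
\item[] $\log L/(TL)\le \log(2T)\log L/T$ once $L\ge1$ and $T\ge2$, so it is covered by the previous margin.
\item[] $X/(qT\ell)\le X/(q-1)\le e^{-\eta\ell}\to0$.
\item[] $\sqrt X/(qT\ell)\le X/(qT\ell)$ since $X\ge1$.
\end{itemize}

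No step is a genuine obstacle. The only point needing attention is choosing $\eta$ uniformly from the $\liminf$ hypothesis, which is handled by the strict inequality $\lambda<\lambda_{\rm bw}$.
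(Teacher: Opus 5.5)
Your proof is correct and follows the paper's argument step for step. You obtain the bandwidth margin from the strict inequality $\lambda<\lambda_{\rm bw}(T)\le\liminf\log(q-1)/\ell$, the buffer margin directly from $\ell^s=o(T)$, and the third margin from $\log L\ll\log\ell$, $\log(2T)\le\ell+O(1)$ and $s>1$. The only cosmetic difference is that you bound $X/(qT\ell)$, $\sqrt X/(qT\ell)$ and $\log L/(TL)$ by margins already proved, whereas the paper evaluates $X/(qT\ell)=e^{-(1-\lambda)\ell}/(2\pi\ell)$ exactly via $qT=2\pi e^{\ell}$.

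One caveat is shared with the paper rather than specific to your argument. You take from the remark after Definition~\ref{def:admissible-height} that $\ell^s=o(T)$ forces $T\to\infty$, but this is really a tacit hypothesis. For example, $T=2\pi e^{1/q}/q$ gives $\ell=1/q$, satisfies the literal definition with $\lambda_{\rm bw}(T)=1$, and yet $1/\ell\to\infty$, so the growth of $T$ should be assumed outright.
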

\begin{proof}
The definition of the liminf gives $\eta>0$ such that
$\log(q-1)-\lambda\ell\ge\eta\ell$ eventually, which proves the first
margin and in particular $X<q-1$.  The second is
\eqref{eq:height-localizable}.  Since $\log(2T)\le\ell+O(1)$ and
$\log L\ll\log\ell$, while $\ell\log\ell=o(\ell^s)$ for $s>1$, the third
margin follows.  The same estimates give $T\to\infty$,
$\log L=o(T)$, $\log L/(TL)=o(1)$, and $1/T=o(1)$.  Finally,
\[
 \frac{X}{qT\ell}=\frac{e^{-(1-\lambda)\ell}}{2\pi\ell},\qquad
 \frac{\sqrt X}{qT\ell}=\frac{e^{-(1-\lambda/2)\ell}}{2\pi\ell},
\]
so the last two entries of \eqref{eq:error-ledger} also vanish.
\end{proof}

The proof fixes $s$ as above and $0<\lambda<\lambda_{\rm bw}(T)$, lets
$q\to\infty$, and only then lets $\lambda\uparrow\lambda_{\rm bw}(T)$.
For each prescribed tail exponent $B$, the buffer constant
$K=K(B,\lambda,s)$ is fixed before $q\to\infty$.  No finite-$q$ argument
takes the limiting bandwidth.

\section{Notation and uniform estimates}\label{sec:notation}
All implied constants may depend on the fixed parameters $s$ and $\lambda$
(and on $K$ after it is chosen), but not on $q$, $T$, the character, or a
zero.  For real $a<b$,
let $N_\chi(a,b)$ count nontrivial zeros with $a<\gamma\le b$, with
multiplicity.  We write $I=(T,2T]$.  The symbols
$\|\cdot\|_F$ and $\|\cdot\|_1$ denote respectively the Frobenius and
nuclear norms of a matrix.  If $u$ is a complex column vector, then
$\|uu^T\|_1=\|u\|_2^2$; no conjugate transpose is intended in this identity.

We repeatedly use the elementary scales
\begin{equation}\label{eq:scales}
 L=\lambda\ell\asymp\ell,\qquad X=(qT/2\pi)^\lambda,
 \qquad D_0=(K\ell)^s,
\end{equation}
with $0<\lambda<\lambda_{\rm bw}(T)$.  Lemma~\ref{lem:height-margins} gives
\[
 X/(q-1)\to0,\qquad D_0/T\to0,
 \qquad \frac{\log L\log(2T)}{T}\to0.
\]
These are respectively the bandwidth, localization, and finite-sampling
margins.  They are the only height restrictions used later.

We use a symmetric zero-count theorem and derive from it both the local
one-sided bound and the family count required below.  For $U\ge0$, let
\[
 N_\chi^{\mathrm{sym}}(U)
 :=\sum_{\substack{\rho=\beta+i\gamma\\ |\gamma|\le U}}m_\rho,
\]
where only nontrivial zeros are included.

\begin{lemma}[Symmetric and local zero counts]\label{lem:zero-counts}
For a primitive character $\chi\pmod q$ and $U\ge2$,
\begin{equation}\label{eq:symmetric-zero-count}
 N_\chi^{\mathrm{sym}}(U)
 =\frac{U}{\pi}\log\frac{qU}{2\pi e}
 +O\!\left(\log(q(U+2))\right).
\end{equation}
Moreover, for every real $u$,
\begin{equation}\label{eq:local-zero-count}
 N_\chi(u,u+1)\ll\log(q(|u|+3)).
\end{equation}
The constants are absolute, and both counts include multiplicity.
\end{lemma}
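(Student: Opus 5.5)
The plan is to run the classical argument for the Riemann--von Mangoldt formula for primitive Dirichlet $L$-functions: argument principle on a rectangle, Stirling for the Gamma factor, and a Jensen-type bound for the logarithmic derivative. Then we read off the local count from the symmetric one.

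\textbf{Completed $L$-function.} Let $\chi$ be primitive modulo $q$ with parity $\mathfrak a\in\{0,1\}$. Put
\[
\xi(s,\chi)=(q/\pi)^{(s+\mathfrak a)/2}\,\Gamma\!\big(\tfrac{s+\mathfrak a}{2}\big)L(s,\chi).
\]
This is entire, its zeros are exactly the nontrivial zeros, and it satisfies $\xi(1-s,\bar\chi)=\varepsilon(\chi)\xi(s,\chi)$ with $|\varepsilon(\chi)|=1$. Fix $U\ge2$, perturbed by at most $O(1)$ so that no zero has ordinate $\pm U$. This perturbation is harmless once the local bound is available, since it costs $O(\log(qU))$. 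Apply the argument principle to $\xi$ on the rectangle with vertices $-1/2\pm iU$ and $3/2\pm iU$. Because $\chi$ is complex in general, $\xi$ is not self-conjugate, so we cannot fold the contour in half. Instead we treat the whole rectangle, using the functional equation to transfer the left half to $\bar\chi$ on the right half.

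\textbf{Main term and error terms.}
\begin{itemize}
\item \emph{Gamma and conductor factors.} The factors $(q/\pi)^{s/2}\Gamma((s+\mathfrak a)/2)$ contribute $\frac{U}{\pi}\log\frac{qU}{2\pi e}+O(1)$ by Stirling. The $\mathfrak a$ shift only changes $O(1)$ terms.
\item \emph{Right vertical side.} On $\sigma=3/2$, $\log L$ is bounded absolutely, so this side gives $O(1)$.
\item \emph{Horizontal sides.} We must bound the variation of $\arg L(\sigma\pm iU,\chi)$ for $\sigma\in[1/2,3/2]$, and similarly for $\bar\chi$, by $O(\log(qU))$.
\end{itemize}

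\textbf{The main obstacle} is the horizontal-side bound, which must be uniform in $q$ and in $U\ge2$. I would use the standard route through the partial fraction formula
\[
\frac{L'}{L}(s,\chi)=\sum_{|\gamma-t|\le1}\frac1{s-\rho}+O(\log(q(|t|+2))),\qquad -1\le\sigma\le2,
\]
derived from the Hadamard product together with the facts that $\operatorname{Re}B(\chi)=-\sum_\rho\operatorname{Re}(1/\rho)$ and $L'/L(2+it)\ll1$. Evaluating the real part at $2+it$ gives
\[
\sum_\rho\frac{2-\beta}{(2-\beta)^2+(t-\gamma)^2}\ll\log(q(|t|+2)).
\]
This yields at once $N_\chi(t-1,t+1)\ll\log(q(|t|+3))$, which is \eqref{eq:local-zero-count} for every real $u$; negative $u$ is covered since the estimate is in $|t|$. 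Integrating the partial fraction formula along the horizontal segment, each nearby zero contributes $O(1)$ to the argument change, and there are $O(\log(qU))$ of them. This gives the horizontal error $O(\log(q(U+2)))$.

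All constants are absolute, since every input (Stirling, $|L'/L(2+it)|\le|\zeta'/\zeta(2)|$, and the Hadamard identity) is uniform in $\chi$. Counting via the argument principle includes multiplicity automatically.
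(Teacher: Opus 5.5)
Your proposal is correct, but it takes a different route from the paper. The paper does not prove the Riemann--von Mangoldt formula. It quotes the explicit version in BMOR, Theorem~1.1, after checking the hypotheses (conductor $q>1$, $U\ge5/7$) and absorbing the parity term $-\chi(-1)/4$ and the bounded-conductor branch into the error. It then \emph{deduces} the local bound from the symmetric count by differencing. That step needs a three-case split ($u\ge2$, $u\le-4$, $-4<u<2$), because a complex $\chi$ has no $\gamma\mapsto-\gamma$ symmetry.

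You reverse the logical order. Positivity of $\Re(2+it-\rho)^{-1}$ in the Hadamard identity at $2+it$ gives $N_\chi(t-1,t+1)\ll\log(q(|t|+2))$ directly for every real $t$, so negative ordinates need no separate treatment. That local bound then feeds the horizontal-side estimate in the argument-principle computation. Your use of $\xi(1-s,\bar\chi)=\varepsilon(\chi)\xi(s,\chi)$ handles the left half correctly for non-real $\chi$: it maps the left half-contour onto the right half-contour for $\bar\chi$ with the same orientation. The Stirling bookkeeping does give $\frac U\pi\log\frac{qU}{2\pi e}+O(1)$.

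The paper's route is shorter and rests on a published explicit theorem, so explicit constants are available if ever needed. Yours is self-contained and textbook-standard (Davenport, Ch.~16), and it obtains the local count more cleanly. The cost is length and only unspecified absolute constants.

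One small imprecision: the partial-fraction formula does not hold with error $O(\log(q(|t|+2)))$ throughout $-1\le\sigma\le2$ when $|t|$ is small. The reason is that $\frac{\Gamma'}{\Gamma}\bigl(\frac{s+\mathfrak a}{2}\bigr)$ has a pole at the trivial zero $s=-\mathfrak a$. You only use the formula on $|t|=U\ge2$ and at $\sigma=2$, so nothing breaks.
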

\begin{proof}
Equation~\eqref{eq:symmetric-zero-count} is the symmetric
Riemann--von Mangoldt formula.  More precisely,
\cite[Theorem~1.1]{BMOR} defines $N(U,\chi)$ by counting nontrivial zeros
with $|\gamma|\le U$, with multiplicity, and applies to a character of
conductor $q>1$ once $U\ge5/7$.  With
$\ell_B=\log(q(U+2)/(2\pi))$, it gives $N(U,\chi)=0$ when
$\ell_B\le1.567$ and otherwise gives an explicit estimate centred at
\[
 \frac{U}{\pi}\log\frac{qU}{2\pi e}-\frac{\chi(-1)}4
\]
with an error bounded by $O(\log(q(U+2)))$.  In the first branch the
conductor-height product lies in a fixed bounded range, so the same
big-$O$ asymptotic follows trivially.  Since the present lemma assumes
$U\ge2$ and every $\chi\in\F_q$ is primitive of conductor $q$, all
hypotheses match directly.  The bounded parity term is absorbed by
\eqref{eq:symmetric-zero-count}; see also \cite[\S5.3]{IK}.

It remains to deduce the local estimate without assuming a
$\gamma\mapsto-\gamma$ symmetry for a fixed non-real character.  If $u\ge2$,
monotonicity gives
\[
 N_\chi(u,u+1)
 \le N_\chi^{\mathrm{sym}}(u+1)-N_\chi^{\mathrm{sym}}(u).
\]
The difference of the two main terms in
\eqref{eq:symmetric-zero-count} is $O(\log(q(u+3)))$, and the two error terms
have the same order.  If $u\le-4$, then $u<\gamma\le u+1$ implies
$|u|-1\le|\gamma|<|u|$.  To avoid any endpoint ambiguity, enlarge this to a
symmetric shell of width two:
\[
 N_\chi(u,u+1)
 \le N_\chi^{\mathrm{sym}}(|u|)
      -N_\chi^{\mathrm{sym}}(|u|-2).
\]
Applying \eqref{eq:symmetric-zero-count} at the two endpoints again gives
$O(\log(q(|u|+3)))$.  For $-4<u<2$, the left side is at most
$N_\chi^{\mathrm{sym}}(5)\ll\log q$.  These three cases prove
\eqref{eq:local-zero-count}.
\end{proof}

Coefficient conjugation sends a zero $\beta+i\gamma$ of $L(s,\chi)$ to
$\beta-i\gamma$ of $L(s,\bar\chi)$.  Since $\F_q$ is closed under
$\chi\mapsto\bar\chi$ and $T>0$, the endpoint conventions give the exact
family identity
\[
 2\Nfam(q,T)=\sum_{\chi\in\F_q}
 \left\{N_\chi^{\mathrm{sym}}(2T)-N_\chi^{\mathrm{sym}}(T)\right\}.
\]
Applying \eqref{eq:symmetric-zero-count} at $T$ and $2T$ therefore yields
\begin{align}
 \Nfam(q,T)
 &=\frac{(q-2)T}{2\pi}\log\frac{2qT}{\pi e}+O(q\ell)\notag\\
 &=\frac{(q-2)T\ell}{2\pi}+O(qT+q\ell)
 =\frac{(q-2)T\ell}{2\pi}\left(1+O(\ell^{-1})\right).
 \label{eq:Nfam}
\end{align}

\begin{lemma}[Archimedean density]\label{lem:gamma-density}
The function $\mu_\chi$ introduced in Proposition~\ref{prop:EF} depends only on the parity
of $\chi$.  Uniformly in the two parities and for real $t$,
\[
 |\mu_\chi(t)|\ll \ell+\log^+\!\frac{|t|}{4T}+1.
\]
On $I=(T,2T]$ one has
\[
 \mu_\chi(t)=\frac{\ell}{2\pi}+O(1),\qquad
 \mu_\chi'(t)\ll T^{-1},\qquad
 \int_T^{2T}\mu_\chi(t)\,dt=\frac{T\ell}{2\pi}+O(T).
\]
\end{lemma}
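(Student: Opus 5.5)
Since $\mu_\chi$ is only introduced later in Proposition~\ref{prop:EF}, I take it to be the archimedean density in Weil's explicit formula for a primitive character of conductor $q$ and parity $\mathfrak a\in\{0,1\}$:
\[
 \mu_\chi(t)=\frac1{2\pi}\left\{\log\frac q\pi+\operatorname{Re}\frac{\Gamma'}{\Gamma}\!\left(\frac14+\frac{\mathfrak a}2+\frac{it}2\right)\right\}.
\]
This is the derivative of the smooth main term in the argument principle for the completed function $\xi(s,\chi)=(q/\pi)^{(s+\mathfrak a)/2}\Gamma((s+\mathfrak a)/2)L(s,\chi)$. The normalization should be confirmed against Proposition~\ref{prop:EF}; if the proposition uses a half-sum of the $\chi$ and $\bar\chi$ gamma factors, nothing below changes, because the gamma factor depends only on $\mathfrak a$. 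That dependence is the first claim: $q$ is common to the whole family and the only $\chi$-dependence is through $\mathfrak a=(1-\chi(-1))/2$.

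For the estimates I use Stirling's formula in the form $\frac{\Gamma'}{\Gamma}(z)=\log z-\frac1{2z}+O(|z|^{-2})$, valid uniformly for $\operatorname{Re}z\ge\frac14$. For bounded $|t|$, the real part of the digamma function on the segment $\operatorname{Re}z\in\{\frac14,\frac34\}$ is $O(1)$, since $\psi$ is continuous away from the poles. For large $|t|$ one gets $\operatorname{Re}\psi(\frac14+\frac{\mathfrak a}2+\frac{it}2)=\log\frac{|t|}2+O(|t|^{-2})$, because the $\operatorname{Re}(1/z)$ terms cancel up to $O(t^{-2})$. Hence
\[
 2\pi\mu_\chi(t)=\log\frac{q|t|}{2\pi}+O(\min\{1,t^{-2}\})\quad(|t|\ge1),
 \qquad 2\pi|\mu_\chi(t)|\ll\log q+1\quad(|t|\le1).
\]

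The global bound then follows by cases. If $|t|\le4T$, then $\log(q|t|/2\pi)\le\ell+\log 4$. If $|t|>4T$, then $\log(q|t|/2\pi)=\ell+\log\frac{|t|}{4T}+\log 4$. The small-$|t|$ region is covered since $\log q\le\ell+O(1)$ once $T\ge1$, which holds under admissibility because $T\to\infty$. On $I=(T,2T]$, $\log(qt/2\pi)=\ell+\log(t/T)\in[\ell,\ell+\log2]$, so $\mu_\chi(t)=\ell/2\pi+O(1)$. Integrating gives $\int_T^{2T}\mu_\chi=T\ell/2\pi+O(T)$; one can also integrate $\log(t/T)$ exactly to get the main term plus $(2\log2-1)T/2\pi$.

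The derivative bound is the only step needing a little care. We have $\mu_\chi'(t)=-\frac1{4\pi}\operatorname{Im}\psi'(\frac14+\frac{\mathfrak a}2+\frac{it}2)$. Using $\psi'(z)=\frac1z+O(|z|^{-2})$ in the same sector gives $\operatorname{Im}\psi'\ll|t|^{-1}$, hence $\mu_\chi'(t)\ll T^{-1}$ on $I$. All constants come from Stirling and are uniform in the two parities, independent of $q$, $T$ and $\chi$, as the section's conventions require.
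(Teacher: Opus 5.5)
Your proposal is correct and takes essentially the same route as the paper. The parity dependence is read off the definition of $\mu_\chi$, which is exactly the formula you guessed, with $\kappa_\chi$ in place of $\mathfrak a$. The bounds then come from the digamma and trigamma Stirling expansions on $\Re z\ge 1/4$ at $z=(1/2+\kappa_\chi+it)/2$, with continuity covering bounded $|t|$.

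Using the trigamma expansion directly for $\mu_\chi'$ is the sound way to get the derivative bound, rather than differentiating an $O(T^{-1})$ remainder. One cosmetic slip: $\Re(1/(2z))$ is already $O(t^{-2})$ on its own, so no cancellation is involved.
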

\begin{proof}
The dependence on parity is immediate from the definition.  In the half-plane
$\Re z\ge1/4$, the digamma expansion \cite[(5.11.2)]{DLMF} and the
trigamma expansion \cite[(5.15.8)]{DLMF} give
\[
 \frac{\Gamma'}{\Gamma}(z)=\log z+O((1+|z|)^{-1}),\qquad
 \left(\frac{\Gamma'}{\Gamma}\right)'(z)=z^{-1}+O((1+|z|)^{-2})
\]
away from a fixed compact set.  The arguments used here remain in the
closed half-plane $\Re z\ge1/4$, which lies in an admissible sector;
boundedness on the compact remainder supplies the same global majorant
after enlarging the constant.
Substitution of
$z=(1/2+\kappa_\chi+it)/2$ proves the first estimate.  On $T<t\le2T$ it gives
$\mu_\chi(t)=(2\pi)^{-1}\log(qt/(2\pi))+O(T^{-1})$, from which all three displayed
claims follow by differentiation and integration.
\end{proof}

\begin{lemma}[Prime sums used in the trace formulae]\label{lem:prime-sums}
Uniformly for $x\ge2$,
\[
 \sum_{n\le x}\frac{\Lambda(n)}{\sqrt n}\ll\sqrt x,
 \qquad
 \sum_{n\le x}\Lambda(n)^2\ll x\log x,
\]
and
\[
 \sum_{n\le x}\frac{\Lambda(n)^2}{n}
 =\frac12\log^2x+O(\log x).
\]
\end{lemma}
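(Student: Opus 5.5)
The plan is to deduce all three estimates from the prime number theorem in the form $\psi(x)=\sum_{n\le x}\Lambda(n)=x+O(x\exp(-c\sqrt{\log x}))$. Weaker inputs would also do: the Chebyshev bound $\psi(x)\ll x$ for the first two claims, and Mertens-type asymptotics for the third. In every case we use Abel/Stieltjes summation against $\psi$ or against the squared weights. Uniformity in $x\ge2$ is automatic, since all implied constants come from absolute inputs.

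For the first bound, partial summation gives
\[
 \sum_{n\le x}\frac{\Lambda(n)}{\sqrt n}=\frac{\psi(x)}{\sqrt x}+\frac12\int_2^x\frac{\psi(u)}{u^{3/2}}\,du .
\]
The Chebyshev bound $\psi(u)\ll u$ then makes this $\ll\sqrt x+\int_2^x u^{-1/2}\,du\ll\sqrt x$.

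For the second bound, we use $\Lambda(n)\le\log n\le\log x$ for $n\le x$. This gives
\[
 \sum_{n\le x}\Lambda(n)^2\le\log x\,\psi(x)\ll x\log x .
\]

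For the third estimate, we set $\theta_2(x):=\sum_{n\le x}\Lambda(n)^2$. The idea is to separate prime powers from primes.

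\emph{Prime powers.} The higher prime powers $p^k$ with $k\ge2$ contribute
\[
 \sum_{p^k\le x,\,k\ge2}\frac{\log^2p}{p^k}\ll\sum_p\frac{\log^2p}{p^2}=O(1).
\]

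\emph{Primes.} The remaining sum is $\sum_{p\le x}\log^2p/p$. Writing $A(u)=\sum_{p\le u}\log p/p=\log u+O(1)$ (Mertens), Stieltjes integration gives
\[
 \sum_{p\le x}\frac{\log^2p}{p}=\int_{2^-}^x\log u\,dA(u)=A(x)\log x-\int_2^x\frac{A(u)}{u}\,du .
\]
Substituting $A(u)=\log u+O(1)$ yields
\[
 \log^2x+O(\log x)-\tfrac12\log^2x+O(\log x)=\tfrac12\log^2x+O(\log x).
\]

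Combining the prime and prime-power contributions proves the claimed asymptotic with error $O(\log x)$.

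None of these steps is a genuine obstacle; the lemma is classical (cf.\ \cite{IK}). The only point needing care is that the third estimate requires the Mertens asymptotic $A(u)=\log u+O(1)$ uniformly from $u=2$ onward, rather than a mere Chebyshev bound. We cite it directly, since it follows from $\log n!=n\log n-n+O(\log n)$ and $\log n!=\sum_{d\le n}\Lambda(d)\lfloor n/d\rfloor$, with $\psi(x)\ll x$ used to control the error.
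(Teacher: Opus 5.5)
Your proposal is correct and follows essentially the paper's route: Chebyshev's bound $\psi(x)\ll x$ with partial summation for the first two estimates, and, for the third, Mertens' $A(u)=\log u+O(1)$ (derived from the $\log N!$ identity), Stieltjes integration over primes, and an $O(1)$ prime-power contribution. The only differences are that the paper proves $\psi(x)\ll x$ from $\binom{2n}{n}$ rather than citing it, and that the prime number theorem in your opening sentence is never actually needed.
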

\begin{proof}
We give an elementary derivation, so no prime-number-theorem remainder is
needed.  Write
\[
 \vartheta(x)=\sum_{p\le x}\log p,
 \qquad
 \psi(x)=\sum_{n\le x}\Lambda(n).
\]
Every prime $n<p\le2n$ divides $\binom{2n}{n}$, whence
\[
 \vartheta(2n)-\vartheta(n)
 \le\log\binom{2n}{n}\le2n\log2.
\]
Dyadic summation gives $\vartheta(x)\ll x$, and therefore
\[
 \psi(x)=\sum_{k\ge1}\vartheta(x^{1/k})
 \ll x+\sqrt x\log(2x)\ll x.
\]
Partial summation now yields
$\sum_{n\le x}\Lambda(n)n^{-1/2}\ll\sqrt x$, while
$\Lambda(n)^2\le(\log x)\Lambda(n)$ gives
$\sum_{n\le x}\Lambda(n)^2\ll x\log x$.

It remains to obtain the weighted asymptotic.  The exact identity
\[
 \log N!=\sum_{m\le N}\Lambda(m)\left\lfloor\frac Nm\right\rfloor
\]
and the bound $\psi(N)\ll N$ imply
\[
 \sum_{m\le N}\frac{\Lambda(m)}m
 =\frac{\log N!}{N}+O(1)=\log N+O(1),
\]
where the last estimate follows by comparing
$\sum_{j\le N}\log j$ with $\int_1^N\log t\,dt$.  The contribution of
prime powers $p^k$, $k\ge2$, to the left side is bounded, so
\[
 A(x):=\sum_{p\le x}\frac{\log p}{p}=\log x+O(1).
\]
The passage from integral $N$ to real $x\ge2$ follows by monotonicity,
since $\log\lfloor x\rfloor=\log x+O(x^{-1})$.
A final partial summation gives
\[
 \sum_{p\le x}\frac{(\log p)^2}{p}
 =A(x)\log x-\int_{2}^{x}\frac{A(t)}t\,dt
 =\frac12\log^2x+O(\log x).
\]
Finally,
$\sum_{k\ge2}\sum_p(\log p)^2p^{-k}<\infty$, so adding the higher
prime powers proves the last assertion.
\end{proof}

Summing \eqref{eq:local-zero-count} over the $q-2$ nonprincipal characters
will be used without further comment.  In particular, an interval of length
$D\ge1$ adjacent to $I$ contains $O(qD\ell)$ zeros, while
\eqref{eq:Nfam} shows that the main family count in $I$ is of order
$qT\ell$.

\section{The explicit formula and the sampling kernel}\label{sec:EF}
For a primitive nonprincipal character $\chi\pmod q$, put
\[
 \Lambda(s,\chi)=\left(\frac q\pi\right)^{(s+\kappa_\chi)/2}
 \Gamma\!\left(\frac{s+\kappa_\chi}{2}\right)L(s,\chi),\qquad
 \kappa_\chi=\frac{1-\chi(-1)}2.
\]
Thus $\Lambda(s,\chi)=\epsilon_\chi\Lambda(1-s,\bar\chi)$ and
$|\epsilon_\chi|=1$; this completed normalization is equivalent to the
primitive functional equation \cite[(25.15.5)]{DLMF}.  If
$\rho=\beta+i\gamma$ is a nontrivial zero, write
$\gamma_\rho=(\rho-\tfrac12)/i=\gamma-i(\beta-\tfrac12)$.  The functional
equation and coefficient conjugation show, with multiplicity, that
\begin{equation}\label{eq:zero-symmetry}
 \rho\in Z(\chi)\quad\Longleftrightarrow\quad1-\bar\rho\in Z(\chi),
 \qquad \gamma_{1-\bar\rho}=\overline{\gamma_\rho}.
\end{equation}
This point is important when $\chi$ is not real: one does not need to join the
zero sets of $\chi$ and $\bar\chi$.

For $f\in C_c^2(\mathbb R)$ set
$h_f(z)=\int_{\mathbb R}f(u)e^{izu}\,du$.  For $f,g\in C_c^2(\mathbb R)$ define
\[
 W_\chi(f,g)=\sum_{\rho\in Z(\chi)}m_\rho h_f(\gamma_\rho)
          \overline{h_g(\overline{\gamma_\rho})}.
\]
The series is absolutely convergent for such $f$ and $g$.  Indeed, two
integrations by parts give
$h_f(x+iy)\ll_f e^{|y|R}(1+|x|)^{-2}$ for
$\operatorname{supp}f\subset[-R,R]$, and the standard order-one zero count
then applies.  Equation \eqref{eq:zero-symmetry} gives
$W_\chi(g,f)=\overline{W_\chi(f,g)}$.

\begin{proposition}[Explicit formula]\label{prop:EF}
Let $f,g\in C_c^\infty(\mathbb R)$ be supported in $[-L/2,L/2]$, and put
$X=e^L$. Then
\[
 W_\chi(f,g)=\int_{\mathbb R}h_f(t)\overline{h_g(t)}\nu_\chi(t)\,dt,
\]
where
\begin{align*}
 \nu_\chi(t)&=\mu_\chi(t)+P_\chi(t),\\
 \mu_\chi(t)&=\frac1{2\pi}\left\{\log\frac q\pi+
 \Re\frac{\Gamma'}\Gamma\!\left(\frac{1/2+\kappa_\chi+it}{2}\right)\right\},\\
 P_\chi(t)&=-\frac1{2\pi}\sum_{n<X}\frac{\Lambda(n)}{\sqrt n}
 \{\chi(n)n^{-it}+\overline{\chi(n)}n^{it}\}.
\end{align*}
\end{proposition}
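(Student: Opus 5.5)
The plan is to reduce the bilinear identity to the classical Weil explicit formula for a single test function. Put $\tilde g(u)=\overline{g(-u)}$ and $F=f*\tilde g$. Then $F\in C_c^\infty(\mathbb R)$ with $\operatorname{supp}F\subset[-L,L]$. A direct computation gives
\[
 \overline{h_g(\bar z)}=\int\overline{g(u)}e^{-izu}\,du=h_{\tilde g}(z),
\]
and hence $h_F(z)=h_f(z)\overline{h_g(\bar z)}$ for every complex $z$. So $W_\chi(f,g)=\sum_\rho m_\rho h_F(\gamma_\rho)$. For real $t$ the identity specializes to $h_F(t)=h_f(t)\overline{h_g(t)}$, which is exactly the integrand on the right side of Proposition~\ref{prop:EF}. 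Repeated integration by parts gives $h_F(x+iy)\ll_N e^{L|y|}(1+|x|)^{-N}$. This is the only analytic property of the test function that will be needed.

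Next I would prove the explicit formula for $h=h_F$ by contour integration. Write $s=\tfrac12+iz$, so that $z=\gamma_\rho$ exactly when $s=\rho$. Integrate $\frac1{2\pi i}\,h\big((s-\tfrac12)/i\big)\,\frac{\Lambda'}{\Lambda}(s,\chi)$ around the rectangle with vertical sides $\Re s=1+\epsilon$ and $\Re s=-\epsilon$ and horizontal sides at heights $\pm U_j$. The heights $U_j\to\infty$ are chosen so that $\frac{L'}{L}(\sigma\pm iU_j,\chi)\ll\log^2(qU_j)$ uniformly in $-\epsilon\le\sigma\le1+\epsilon$; this uses the local count \eqref{eq:local-zero-count}. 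The residues give the zero sum. The polynomial decay of $h$ kills the horizontal segments. On the left line I apply the functional equation
\[
 \frac{\Lambda'}{\Lambda}(s,\chi)=-\frac{\Lambda'}{\Lambda}(1-s,\bar\chi).
\]
On both lines I then expand $\frac{L'}{L}=-\sum\Lambda(n)\chi(n)n^{-s}$ and shift each line back to $\Re s=\tfrac12$. There is no pole in between, because $\chi$ is nonprincipal, and this shift is legitimate because $\frac{\Gamma'}{\Gamma}$ is regular there.

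The archimedean terms from the two lines combine to
\[
 \frac1{2\pi}\int h(t)\Big\{\log\frac q\pi+\tfrac12\frac{\Gamma'}{\Gamma}\Big(\tfrac{1/2+\kappa_\chi+it}2\Big)+\tfrac12\frac{\Gamma'}{\Gamma}\Big(\tfrac{1/2+\kappa_\chi-it}2\Big)\Big\}\,dt .
\]
For real $t$ the bracket equals $2\pi\mu_\chi(t)$, since $\overline{\psi(w)}=\psi(\bar w)$. The prime terms come out as
\[
 -\frac1{2\pi}\sum_n\frac{\Lambda(n)}{\sqrt n}\int h(t)\{\chi(n)n^{-it}+\overline{\chi(n)}n^{it}\}\,dt .
\]
The $\chi$ part comes from the right line. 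The $\bar\chi$ part comes from the reflected left line: there $n^{-(1-s)}=n^{-1/2}n^{-it}$ with $\bar\chi$ coefficients, and after conjugation bookkeeping this produces $\overline{\chi(n)}n^{it}$. Fourier inversion gives $\int h(t)n^{\mp it}\,dt=2\pi F(\pm\log n)$. This vanishes unless $\log n<L$, because $F$ vanishes at the endpoints $\pm L$ of its support. So the sum truncates to $n<X$, and interchanging sum and integral is then trivial. Replacing $h(t)$ by $h_f(t)\overline{h_g(t)}$ yields $\nu_\chi=\mu_\chi+P_\chi$ as stated.

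The step I expect to need the most care is the sign and conjugation bookkeeping on the reflected line. The test function $h$ is not even, and $\chi$ need not be real, so one must check that the $\bar\chi$ terms produce $\overline{\chi(n)}n^{+it}$ rather than $n^{-it}$. The same check must confirm that the two gamma terms pair into a real part evaluated at $+it$. I would verify both against \eqref{eq:zero-symmetry} and against the Hermitian symmetry $W_\chi(g,f)=\overline{W_\chi(f,g)}$: the right side is visibly Hermitian, because $\nu_\chi$ is real. The justification of the contour limits is routine given Lemma~\ref{lem:zero-counts} and the rapid decay of $h_F$ in the strip $|\Im z|\le\tfrac12+\epsilon$.
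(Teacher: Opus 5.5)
Your proof is correct, but it takes a different route from the paper's.

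\textbf{How the paper argues.} The paper proves only the diagonal case $g=f$. It uses the test function $k_f=f*\widetilde{\bar f}$, so that $h_{k_f}=|h_f|^2$ on the real axis. It then quotes the standard explicit formula (Zhao's Lemma~4, Iwaniec--Kowalski Theorem~5.12, Weil) rather than deriving it. Most of its effort goes into reconciling the cited source's Fourier normalization and its parity symbol $\delta_\chi$ with the paper's own $\kappa_\chi$ and transform convention. Arbitrary $(f,g)$ are then recovered by complex polarization, since both sides are Hermitian forms.

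\textbf{How you argue.} You work directly with the non-Hermitian test $F=f*\tilde g$. Because $h_F(z)=h_f(z)\overline{h_g(\bar z)}$ holds for every complex $z$, you get $W_\chi(f,g)=\sum_\rho m_\rho h_F(\gamma_\rho)$ with no polarization. You then derive the formula from scratch by integrating $h\,\Lambda'/\Lambda$ over a rectangle and reflecting the left line with the functional equation.

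\textbf{What each route buys.} Your route makes the signs, the factors of $2\pi$, the truncation at $n<X$, and the pairing of the two digamma terms into a real part self-evident, with no convention matching against an outside reference. The cost is that you must supply the routine analysis yourself:
\begin{itemize}
\item choose heights where $L'/L\ll\log^2(qU)$, taken independently at the top and bottom, since the ordinates of a non-real $\chi$ need not be symmetric;
\item interchange sum and integral on $\Re s=1+\epsilon$, where the Dirichlet series converges absolutely, and only then shift each term $h\,n^{-s}$ (which is entire) to $\Re s=\tfrac12$.
\end{itemize}
Your phrase ``interchanging sum and integral is then trivial'' after the shift puts this step in the wrong order. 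Each term already equals $n^{-1/2}F(\pm\log n)$ on $\Re s=1+\epsilon$, so truncation is automatic there.

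\textbf{One small slip.} On the reflected line with $s=\tfrac12+it$ you have $1-s=\tfrac12-it$. Hence $n^{-(1-s)}=n^{-1/2}n^{+it}$ directly, with coefficient $\overline{\chi(n)}$, and no conjugation is involved. Your intermediate $n^{-1/2}n^{-it}$ corresponds to parametrizing by $1-s=\tfrac12+it$. In that parametrization $h$ is evaluated at $-t$, and the substitution $t\mapsto-t$ produces the same $\overline{\chi(n)}n^{it}$ term.
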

\begin{proof}
First take $g=f$ and put
$k_f=f*\widetilde{\bar f}$, where
$\widetilde{\bar f}(u)=\overline{f(-u)}$.  Then
\[
 h_{k_f}(z)=h_f(z)\overline{h_f(\bar z)},\qquad
 h_{k_f}(t)=|h_f(t)|^2\quad(t\in\mathbb R),
\]
so the scalar test is real on the real axis.  Apply the standard explicit
formula obtained from the logarithmic derivative of the completed function
above.  Its zero coordinate and prime-side Fourier convention agree with
\cite[Lemma~4]{Zhao2026}, where the sum is over general nontrivial zeros
through $(\rho-\tfrac12)/i$ before the subsequent GRH simplification.  We
do not import the printed parity symbol in that source: its
$\delta_\chi=(1+\chi(-1))/2$ is complementary to the standard
$\kappa_\chi$ used here.  The archimedean term is instead fixed directly by
the completed normalization and \cite[(25.15.5)]{DLMF}.  The general
formula is also recorded in \cite[Theorem~5.12]{IK} and in Weil's
formulation \cite{weil}.

The function $k_f$ lies in $C_c^\infty(\mathbb R)$, is supported in
$[-L,L]$, and satisfies $k_f(-y)=\overline{k_f(y)}$.  Repeated integration
by parts shows that $h_{k_f}$ has the strip decay required by the explicit
formula.  Since $\chi$ is nonprincipal, the completed function is entire
and there is no pole term.  Its logarithmic derivative contributes the
integral against $\mu_\chi$.

For the arithmetic term, Zhao's Fourier convention is
$\widehat H(\xi)=\int_{\mathbb R}H(t)e^{-2\pi it\xi}\,dt$.  With
$H=h_{k_f}$ this gives
\[
 \widehat H\!\left(\frac{\log n}{2\pi}\right)=2\pi k_f(\log n),
\]
and hence its prime term is
\[
 -\sum_{n\ge2}\frac{\Lambda(n)}{\sqrt n}
 \{\chi(n)k_f(\log n)+\overline{\chi(n)}k_f(-\log n)\}.
\]
Equivalently, in the transform convention of this article,
\[
 \int_{\mathbb R}h_{k_f}(t)e^{-it\log n}\,dt=2\pi k_f(\log n),\qquad
 \int_{\mathbb R}h_{k_f}(t)e^{it\log n}\,dt=2\pi k_f(-\log n),
\]
so this arithmetic side is exactly
$\int_{\mathbb R}h_{k_f}(t)P_\chi(t)\,dt$.  This proves the asserted
identity on the diagonal.

Both sides are Hermitian quadratic forms in $f$, so complex polarization
recovers the cross term for arbitrary $f,g$.  In that cross term
$k=f*\widetilde{\bar g}$ satisfies
$h_k(z)=h_f(z)\overline{h_g(\bar z)}$ and is supported in $[-L,L]$.
Thus the conjugations, support truncation, sign, and every factor of
$2\pi$ are preserved.  The endpoint $n=X$ is immaterial because the
convolution vanishes at the boundary of its support.
\end{proof}

Let $\phi\in C_c^\infty(\mathbb R)$ be real and even, supported in $[-L/2,L/2]$.  The Gevrey window constructed in Section~\ref{sec:gevrey} has these properties.  Put
\[
 a=L^{-1}\int\phi^2,\quad b=L^{-1}\int\phi^4,\quad
 \Phi=\widehat{\phi^2},\quad
 g(y)=(\phi^2\star\phi^2)(y):=\int_{\mathbb R}\phi(u)^2\phi(u+y)^2\,du,
\]
$h=2\pi/L$, $\tau_k=T+kh$, $d=\lfloor T/h\rfloor$, and
$f_k(u)=\phi(u)e^{-i\tau_ku}$.  Thus
$h_{f_k}(z)=\widehat\phi(z-\tau_k)$.

\begin{lemma}[Critical-density sampling identity]\label{lem:Poisson}
For real $t,t'$,
\[
 \sum_{k\in\mathbb Z}\widehat\phi(t-\tau_k)\widehat\phi(t'-\tau_k)
 =L\Phi(t-t'),\qquad
 \sum_{k\in\mathbb Z}\widehat\phi(t-\tau_k)^2=aL^2.
\]
\end{lemma}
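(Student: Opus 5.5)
The identity is a Poisson summation in the sampling index $k$, and I will apply it directly. Fix real $t,t'$ and consider
\[
 F(x)=\widehat\phi(t-T-x)\,\widehat\phi(t'-T-x),
\]
so that the sum equals $\sum_{k\in\mathbb Z}F(kh)$ with $h=2\pi/L$. Since $\phi\in C_c^\infty$, $\widehat\phi$ is Schwartz, and so is $F$; hence Poisson summation applies with absolute convergence:
\[
 \sum_k F(kh)=\frac1h\sum_{m\in\mathbb Z}\int_{\mathbb R}F(x)e^{-2\pi i m x/h}\,dx
 =\frac{L}{2\pi}\sum_m\int F(x)e^{-imLx}\,dx .
\]

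Next I compute the Fourier integral of the product. With the paper's convention $\widehat\phi(\xi)=\int\phi(u)e^{i\xi u}du$, write $\widehat\phi(t-T-x)=\int\phi(u)e^{i(t-T)u}e^{-ixu}du$, and similarly for the second factor with variable $v$. Integrating in $x$ against $e^{-imLx}$ produces $2\pi\,\delta(u+v+mL)$. This gives
\[
 \int F(x)e^{-imLx}dx=2\pi\int\phi(u)\phi(-u-mL)e^{i(t-T)u}e^{-i(t'-T)(u+mL)}\,du .
\]
The only substantive point is that the aliasing terms $m\ne0$ vanish. This follows from the support condition: $\operatorname{supp}\phi\subset[-L/2,L/2]$ forces $u\in[-L/2,L/2]$ and $u+mL\in[-L/2,L/2]$, which is possible only on a boundary point of measure zero when $m\neq0$. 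Smoothness gives $\phi(\pm L/2)=0$ in any case. This is exactly the critical-density choice $h=2\pi/L$.

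For $m=0$, evenness of $\phi$ gives $\phi(-u)=\phi(u)$, so the sum becomes
\[
 \frac{L}{2\pi}\cdot2\pi\int\phi(u)^2e^{i(t-t')u}du=L\,\widehat{\phi^2}(t-t')=L\Phi(t-t').
\]
Setting $t'=t$ yields $L\Phi(0)=L\int\phi^2=aL^2$, by the definition of $a$. This proves the second identity.

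To avoid manipulating the delta function formally, I would instead write the $x$-integral as a Fourier transform of a convolution: $F$ is the product of two Fourier transforms, so its transform is the convolution of the modulated $\phi$'s, and Plancherel with $\phi$ real makes this rigorous. This bookkeeping of signs and $2\pi$ factors is the only step needing care. I do not expect any genuine obstacle beyond it, since the vanishing of the $m\ne0$ terms is immediate from the support condition.
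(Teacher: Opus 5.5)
Your proposal is correct and follows essentially the same route as the paper's proof. Both apply Poisson summation over $T+h\mathbb{Z}$ and identify the Fourier transform of $\widehat\phi(t-x)\widehat\phi(t'-x)$ as a convolution of modulated copies of $\phi$; that convolution vanishes for $|\xi|\ge L$, so only the zero dual frequency survives at the critical spacing $h=2\pi/L$, giving $L\Phi(t-t')$ and then $aL^2$ at $t=t'$. Your justification of absolute convergence, namely that $\widehat\phi$ is Schwartz because $\phi\in C_c^\infty$, suffices and is simpler than the paper's appeal to Gevrey decay.
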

\begin{proof}
Set $\Upsilon(x)=\widehat\phi(t-x)\widehat\phi(t'-x)$.  A direct Fourier
calculation gives
\[
 \widehat\Upsilon(\xi)=2\pi e^{it'\xi}
 \int\phi(u)\phi(\xi-u)e^{i(t-t')u}\,du,
\]
which vanishes for $|\xi|\ge L$; at $|\xi|=L$ the two support
intervals meet in only one point, so the integral is still zero.  Poisson
summation on $T+h\mathbb Z$ therefore leaves only the zero dual frequency,
even at the critical spacing $2\pi/h=L$.  Its value is
$L\int\phi(u)^2e^{i(t-t')u}\,du=L\Phi(t-t')$.  Taking $t=t'$ proves the
second identity.  The Gevrey decay in Section~\ref{sec:gevrey} justifies
Poisson summation absolutely.
\end{proof}

For $0\le k,l<d$ define $G_{\chi,kl}=W_\chi(f_k,f_l)$.  Since $\phi$ is
real and even,
\[
 \overline{\widehat\phi(\bar z)}=\widehat\phi(-z)=\widehat\phi(z).
\]
Consequently Proposition~\ref{prop:EF} yields the two exactly equal representations
\begin{equation}\label{eq:G-two-sides}
 G_{\chi,kl}=
 \sum_{\rho\in Z(\chi)}m_\rho\widehat\phi(\gamma_\rho-\tau_k)
 \widehat\phi(\gamma_\rho-\tau_l)
 =\int_{\mathbb R}\widehat\phi(t-\tau_k)\widehat\phi(t-\tau_l)
 \nu_\chi(t)\,dt.
\end{equation}
The prime-side representation is real symmetric, whereas the zero-side
representation exposes the functional-equation blocks.  We use
\[
 \widetilde G_\chi=G_\chi/L,\qquad
 \widehat G_\chi=G_\chi/(aL^2)=\widetilde G_\chi/(aL).
\]

\section{Gevrey strip decay and nuclear localization}\label{sec:gevrey}
\subsection{The Montgomery--Taylor window and strip decay}
Put $v_\lambda(x)=\cos(\sqrt2\lambda x)$ and
$H_\lambda(x)=v_\lambda(x)^{1/2}$ on $[-1/2,1/2]$.  Since
$v_\lambda\ge\cos(\lambda/\sqrt2)>0$, $H_\lambda$ is analytic in a fixed complex
neighbourhood and Cauchy's estimates give
\begin{equation}\label{eq:Hder}
 \|H_\lambda^{(j)}\|_\infty\le C_\lambda A_\lambda^j j!.
\end{equation}
For fixed $s>1$ define
\[
 \eta_s(t)=\begin{cases}e^{-t^{-1/(s-1)}},&t>0,\\0,&t\le0,\end{cases}
 \quad R_s(t)=\frac{\eta_s(t)}{\eta_s(t)+\eta_s(1-t)}\quad(0<t<1),
\]
extended by zero and one on the two outer half-lines.
We record a self-contained derivative estimate for this particular cutoff.
Put $\alpha=(s-1)^{-1}$.  For $t>0$, choose a fixed
$0<\theta_s<1$ so that, on the complex disc $|z-t|\le\theta_st$,
\[
 \Re(z^{-\alpha})\ge c_st^{-\alpha}.
\]
Cauchy's estimate applied to $e^{-z^{-\alpha}}$ gives
\[
 |\eta_s^{(m)}(t)|
 \le m!(\theta_st)^{-m}e^{-c_st^{-\alpha}}
 \le C_sB_s^m(m!)^s.
\]
Indeed, maximizing $t^{-m}e^{-c_st^{-\alpha}}$ gives
$O(C_s^m m^{(s-1)m})$, and
$m^{(s-1)m}\le e^{(s-1)m}(m!)^{s-1}$.  The same estimate tends to zero as
$t\downarrow0$ after any fixed differentiation, so the extension of
$\eta_s$ by zero is smooth and flat at the origin.

Let $D(t)=\eta_s(t)+\eta_s(1-t)$.  On $[0,1]$ one has
$D(t)\ge d_s>0$, and the preceding estimate applies to every derivative of
$D$.  Differentiating $D\,D^{-1}=1$ gives recursively, for $m\ge1$,
\[
 \|(D^{-1})^{(m)}\|_\infty
 \le d_s^{-1}\sum_{j=1}^m\binom mj
 \|D^{(j)}\|_\infty\|(D^{-1})^{(m-j)}\|_\infty.
\]
Induction, using
\[
 \binom mj(j!)^s((m-j)!)^s
 =\frac{(m!)^s}{\binom mj^{s-1}}\le(m!)^s,
\]
and increasing the exponential constant, yields the same
$C_sB_s^m(m!)^s$ bound for $D^{-1}$.  Leibniz' formula then gives
\begin{equation}\label{eq:Rgev}
 \|R_s^{(m)}\|_\infty\le C_sB_s^m(m!)^s.
\end{equation}
At $0$ the factor $\eta_s(t)$ is flat, and at $1$ the identity
$1-R_s(t)=\eta_s(1-t)/D(t)$ is flat; hence the extensions by zero and one
are $C^\infty$ and satisfy the same estimate.
Set
\[
 r_L(u)=R_s(L/2-u)R_s(L/2+u),\qquad
 \phi_{\lambda,L}(u)=
 \begin{cases}
 H_\lambda(u/L)r_L(u),& |u|\le L/2,\\
 0,& |u|>L/2.
 \end{cases}
\]
The flatness of $R_s$ at $0$ makes this piecewise definition $C^\infty$ at
$u=\pm L/2$.  The product defining $r_L$ avoids any artificial
nonsmoothness from inserting $|u|$ into the transition itself.
From this point onward, all previously introduced window-dependent notation is
specialized to $\phi=\phi_{\lambda,L}$; in particular this applies to
$a,b,\Phi,g,f_k,G_\chi,\widetilde G_\chi$, and $\widehat G_\chi$.

\begin{lemma}[Uniform derivatives]\label{lem:G1}
For $L\ge3$, $\|\phi_{\lambda,L}\|_1\ll_{\lambda,s}L$, and for every integer $k\ge1$,
\[
 \|\phi_{\lambda,L}^{(k)}\|_1\le C_{\lambda,s}B_{\lambda,s}^k(k!)^s.
\]
\end{lemma}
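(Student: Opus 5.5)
The $L^1$ bound is immediate. By construction $0\le R_s\le1$, and $|H_\lambda|\le1$ on $[-1/2,1/2]$. Hence $|\phi_{\lambda,L}|\le1$ on its support $[-L/2,L/2]$, which gives $\|\phi_{\lambda,L}\|_1\le L$.

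For the derivatives I will write $\phi=H_\lambda(\cdot/L)\,r_L$ on $|u|\le L/2$; both factors extend smoothly, and $\phi$ is $C^\infty$ across $\pm L/2$ by flatness. Leibniz' formula then gives
\[
 \phi^{(k)}=\sum_{j=0}^k\binom kj L^{-j}H_\lambda^{(j)}(u/L)\,r_L^{(k-j)}(u).
\]
The two factors are controlled as follows.

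The Gevrey bound for $r_L$ comes from the product $R_s(L/2-u)R_s(L/2+u)$. The first factor is constant equal to $1$ unless $u\ge L/2-1$, and the second is constant unless $u\le -L/2+1$. Since $L\ge3$, these transition zones $[L/2-1,L/2]$ and $[-L/2,-L/2+1]$ are disjoint. So for $m\ge1$, $r_L^{(m)}$ is supported in the union of two unit intervals, and on each of them it equals $\pm R_s^{(m)}$ of a single translate. By \eqref{eq:Rgev}, this gives $\|r_L^{(m)}\|_\infty\le C_sB_s^m(m!)^s$ and $\|r_L^{(m)}\|_1\le 2C_sB_s^m(m!)^s$.

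For $H_\lambda$, \eqref{eq:Hder} gives $L^{-j}\|H_\lambda^{(j)}\|_\infty\le C_\lambda A_\lambda^j j!\,L^{-j}$.

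I then split the Leibniz sum into two kinds of terms.

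\emph{Terms with $j<k$.} Here $r_L$ is differentiated at least once, so the term lives on the two unit intervals. Its $L^1$ norm is at most $\binom kj C_\lambda A_\lambda^j j!\cdot 2C_sB_s^{k-j}((k-j)!)^s$. Using $j!\le (j!)^s$ and the inequality $\binom kj(j!)^s((k-j)!)^s\le(k!)^s$ already recorded in the text, each such term is at most $2C_\lambda C_s\max(A_\lambda,B_s)^k(k!)^s$. Summing over at most $k+1\le 2^k$ terms costs only a factor $2^k$, which I absorb into the exponential constant.

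\emph{The term $j=k$.} This is the one place where the support has length $L$ rather than $O(1)$, so the main issue is ensuring no positive power of $L$ survives; this is the obstacle. The term is $L^{-k}H_\lambda^{(k)}(u/L)r_L(u)$, with $L^1$ norm at most $L\cdot L^{-k}C_\lambda A_\lambda^k k!$. Because $k\ge1$ and $L\ge3$, we have $L^{1-k}\le1$, so this is at most $C_\lambda A_\lambda^k(k!)^s$. This uniformity in $L$ for $k\ge1$ is exactly what the lemma requires, and the hypothesis $L\ge3$ is also what keeps the transition zones disjoint.

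Combining the two cases yields $\|\phi_{\lambda,L}^{(k)}\|_1\le C_{\lambda,s}B_{\lambda,s}^k(k!)^s$, with $B_{\lambda,s}=2\max(A_\lambda,B_s)$ and $C_{\lambda,s}=3C_\lambda C_s$, say.
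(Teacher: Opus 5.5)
Your proof is correct and is essentially the paper's argument: Leibniz' formula with the $j=k$ term separated (so that $L\cdot L^{-k}\le1$ removes the support length), sup-norm bounds on the rescaled $H_\lambda$-derivatives paired with $L^1$ bounds on $r_L^{(m)}$ over the two unit transition intervals, the inequality $\binom kj(j!)^s((k-j)!)^s\le(k!)^s$, and $k+1\le2^k$. The only cosmetic difference is that you pass through $j!\le(j!)^s$, whereas the paper uses the exact identity $\binom kj j!((k-j)!)^s=k!((k-j)!)^{s-1}$.
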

\begin{proof}
The zeroth derivative is bounded by support length.  From \eqref{eq:Hder},
\[
 \|(H_\lambda(\cdot/L))^{(j)}\|_\infty\le C_\lambda A_\lambda^j j!L^{-j},
 \quad
 \|(H_\lambda(\cdot/L))^{(j)}\|_1\le C_\lambda A_\lambda^j j!\quad(j\ge1).
\]
For $m\ge1$, $r_L^{(m)}$ is supported in two intervals of total length two; on each interval
one transition factor equals one.  Hence
$\|r_L^{(m)}\|_1\le2C_sB_s^m(m!)^s$.  Leibniz' formula, separating the term $j=k$, gives
terms bounded by
\[
 \binom{k}{j}A_\lambda^j j!B_s^{k-j}((k-j)!)^s.
\]
The exact inequality
$\binom{k}{j}j!((k-j)!)^s=k!((k-j)!)^{s-1}\le(k!)^s$
and $k+1\le2^k$ absorb the sum into $C B^k(k!)^s$.
\end{proof}

\begin{proposition}[Abstract Gevrey strip decay]\label{prop:abstract-strip}
Fix $s>1$ and $\sigma>0$.  Let $\{\varphi_L\}_{L\ge1}$ be a family in
$C_c^\infty(\mathbb R)$ with support in $[-L/2,L/2]$.  Suppose that, for
constants $C_0,B_0$ independent of $L$,
\begin{equation}\label{eq:abstract-gevrey-assumptions}
 \|\varphi_L\|_\infty\le C_0,\qquad
 \|\varphi_L\|_1\le C_0L,
 \qquad
 \|\varphi_L^{(k)}\|_1\le C_0B_0^k(k!)^s\quad(k\ge1).
\end{equation}
Then there are $C,c,R_0>0$, depending only on the displayed data, such that
for $|y|\le\sigma$ and $|r|\ge R_0$,
\begin{equation}\label{eq:abstract-strip-decay}
 |\widehat\varphi_L(r-iy)|
 \le C e^{\sigma L/2}e^{-c|r|^{1/s}}.
\end{equation}
For all real $r$ and $|y|\le\sigma$, one also has
$|\widehat\varphi_L(r-iy)|\le CLe^{\sigma L/2}$.
\end{proposition}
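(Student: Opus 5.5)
The approach is $k$-fold integration by parts, followed by an optimization over $k$. Write $z=r-iy$ with $|y|\le\sigma$. Since $\widehat\varphi_L(z)=\int\varphi_L(u)e^{izu}\,du$ and $\operatorname{supp}\varphi_L\subset[-L/2,L/2]$, we have $|e^{izu}|=e^{yu}\le e^{\sigma L/2}$ on the support. The trivial bound then gives $|\widehat\varphi_L(z)|\le e^{\sigma L/2}\|\varphi_L\|_1\le C_0Le^{\sigma L/2}$. This is the second assertion, valid for all real $r$.

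For the decay estimate we use that $\varphi_L$ is smooth and compactly supported, so integrating by parts $k$ times produces no boundary terms:
\[
 \widehat\varphi_L(z)=\left(\frac{-1}{iz}\right)^{k}\int\varphi_L^{(k)}(u)e^{izu}\,du .
\]
Since $|z|\ge|r|$, the hypothesis \eqref{eq:abstract-gevrey-assumptions} gives, for every $k\ge1$,
\[
 |\widehat\varphi_L(r-iy)|\le e^{\sigma L/2}\,C_0\,\frac{B_0^k(k!)^s}{|r|^k}
 \le C_0e^{\sigma L/2}\left(\frac{B_0k^s}{|r|}\right)^{k},
\]
where we used $k!\le k^k$. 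The important feature is that the constants $C_0,B_0$ do not depend on $L$, so the only $L$-dependence is the factor $e^{\sigma L/2}$.

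The step that needs care is the choice of $k$. Take $k=\lfloor(|r|/(eB_0))^{1/s}\rfloor$. Then $B_0k^s/|r|\le e^{-1}$, so the bound becomes $C_0e^{\sigma L/2}e^{-k}$. Now choose $R_0$ so large that $k\ge1$ and $k\ge\tfrac12(|r|/(eB_0))^{1/s}$ whenever $|r|\ge R_0$. This yields \eqref{eq:abstract-strip-decay} with $c=\tfrac12(eB_0)^{-1/s}$ and $C=C_0$. All of $C,c,R_0$ depend only on $s,\sigma,C_0,B_0$. In fact $\sigma$ enters only through the factor $e^{\sigma L/2}$, and the hypothesis $\|\varphi_L\|_\infty\le C_0$ is not needed. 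No genuine obstacle arises beyond this bookkeeping of the optimization over $k$.
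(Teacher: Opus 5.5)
Your proof is correct. It reaches the paper's estimate by a slightly different and cleaner route.

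\textbf{The paper's route.} The paper absorbs the vertical shift into the function, setting $F_y(u)=\varphi_L(u)e^{yu}$, and integrates by parts only against the real oscillation $e^{iru}$. This forces a Leibniz expansion of $F_y^{(k)}$. The $j\ge1$ terms are controlled by the Gevrey hypothesis. The $j=0$ term $y^k\varphi_L e^{yu}$ is handled separately, using $\|\varphi_L\|_\infty\le C_0$ and the $\sinh$ computation. The result is $\|F_y^{(k)}\|_1\le Ce^{\sigma L/2}B_1^k(k!)^s$ with an enlarged, $\sigma$-dependent constant $B_1$.

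\textbf{Your route.} You integrate by parts against the full complex exponential $e^{izu}$, $z=r-iy$. This puts all $k$ derivatives on $\varphi_L$ and produces the factor $|z|^{-k}\le|r|^{-k}$ directly. The weight $|e^{izu}|=e^{yu}\le e^{\sigma L/2}$ enters only as a pointwise bound on the support.

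\textbf{What each buys.} Your version gives a shorter argument with no Leibniz bookkeeping. It yields constants $C=C_0$, $c=\tfrac12(eB_0)^{-1/s}$ and $R_0$ that are independent of $\sigma$. It also shows, correctly, that the hypothesis $\|\varphi_L\|_\infty\le C_0$ is superfluous for this proposition. The paper's formulation, which views $\widehat\varphi_L(r-iy)$ as the real-frequency transform of $F_y$, yields a Gevrey bound for $F_y$ itself as a by-product, but the statement does not need it.
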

\begin{proof}
Put $F_y(u)=\varphi_L(u)e^{yu}$.  Its endpoint derivatives vanish.  For
$k\ge1$, Leibniz' formula and the $j\ge1$ bounds in
\eqref{eq:abstract-gevrey-assumptions} give
\[
 \sum_{j=1}^k\left\|e^{yu}\binom{k}{j}y^{k-j}
 \varphi_L^{(j)}(u)\right\|_1
 \le Ce^{\sigma L/2}B_1^k(k!)^s.
\]
For the remaining $j=0$ term, if $y\ne0$, then
\[
 |y|^k\int_{-L/2}^{L/2}|\varphi_L(u)|e^{yu}\,du
 \le 2C_0|y|^{k-1}\sinh(|y|L/2)
 \le Ce^{\sigma L/2}B_1^k;
\]
when $y=0$ this term vanishes.  Hence
\begin{equation}\label{eq:abstract-Fy-derivative}
 \|F_y^{(k)}\|_1\le Ce^{\sigma L/2}B_1^k(k!)^s.
\end{equation}
Integrating by parts $k$ times gives the right side of
\eqref{eq:abstract-Fy-derivative} multiplied by $|r|^{-k}$.  Choose
$k=\lfloor(|r|/(e^sB_1))^{1/s}\rfloor$; after increasing $R_0$ and changing
constants, this yields \eqref{eq:abstract-strip-decay}.  The all-frequency
bound follows directly from
$\|F_y\|_1\le C_0Le^{\sigma L/2}$.
\end{proof}

The derivative bound has two useful consequences which will be used with no
further comment.  First, for $j=1,2$,
\begin{equation}\label{eq:poly-decay}
 |\widehat\phi(r)|\le \min\{CL,C_j|r|^{-j}\}.
\end{equation}
Secondly, for suitable constants $C_1,C_2>0$, put $\psi(0)=L$ and, for
$r\ne0$,
$\psi(r)=\min\{L,C_1/|r|,C_2/r^2\}$.  Then
\begin{equation}\label{eq:psi-integrals}
 \int_0^\infty\psi(r)\,dr\ll\log L,\qquad
 \int_{\mathbb R}\psi(r)^2\,dr\ll L,\qquad
 \int_0^\infty r\psi(r)^2\,dr\ll\log L.
\end{equation}
These estimates follow by splitting at $L^{-1}$ and $1$.  They are the source of the
logarithmic losses in the one- and two-variable family end-effect estimates.

\begin{lemma}[Complex-strip Fourier decay]\label{lem:G2}
There are $C,c,R_0>0$ such that, uniformly for $|y|\le1/2$ and $|r|\ge R_0$,
\[
 |\widehat\phi_{\lambda,L}(r-iy)|\le Ce^{L/4}e^{-c|r|^{1/s}}
 =CX^{1/4}e^{-c|r|^{1/s}}.
\]
For all $r$, the upper bound $O(Le^{L/4})$ holds.
\end{lemma}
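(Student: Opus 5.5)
The plan is to deduce Lemma~\ref{lem:G2} directly from Proposition~\ref{prop:abstract-strip}, applied with $\varphi_L=\phi_{\lambda,L}$ and $\sigma=1/2$. The Proposition then gives
\[
 |\widehat\phi_{\lambda,L}(r-iy)|\le Ce^{L/4}e^{-c|r|^{1/s}}
\]
for $|y|\le1/2$ and $|r|\ge R_0$, together with the all-frequency bound $CLe^{L/4}$. The identity $e^{L/4}=X^{1/4}$ is just the definition $X=e^L$. So the whole task is to verify the three hypotheses \eqref{eq:abstract-gevrey-assumptions} with constants depending only on $\lambda$ and $s$.

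\emph{Sup norm.} On $[-L/2,L/2]$ we have $0\le v_\lambda\le1$, so $0\le H_\lambda\le1$. Since $0\le R_s\le1$, also $0\le r_L\le1$. Hence $\|\phi_{\lambda,L}\|_\infty\le1$.

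\emph{$L^1$ and derivative bounds.} Lemma~\ref{lem:G1} gives $\|\phi_{\lambda,L}\|_1\ll L$ and $\|\phi_{\lambda,L}^{(k)}\|_1\le C_{\lambda,s}B_{\lambda,s}^k(k!)^s$ for $k\ge1$, valid for $L\ge3$. Setting $C_0=\max\{1,C_{\lambda,s},\text{the }L^1\text{ constant}\}$ and $B_0=B_{\lambda,s}$ completes the hypotheses. The support condition $\operatorname{supp}\phi_{\lambda,L}\subset[-L/2,L/2]$ and smoothness hold by construction, using the flatness of $R_s$ recorded before Lemma~\ref{lem:G1}.

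\emph{Range of $L$.} The Proposition is stated for $L\ge1$, while Lemma~\ref{lem:G1} needs $L\ge3$. Since $L=\lambda\ell\to\infty$, only $L\ge3$ matters. If a uniform statement over $1\le L<3$ were wanted, those finitely many scales could be absorbed by enlarging the constants.

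The only step needing care is checking that the constants in Lemma~\ref{lem:G1} are independent of $L$. This holds because the rescaling $H_\lambda(\cdot/L)$ only improves derivative sup-norms by factors $L^{-j}$. The $r_L$ derivatives live on two unit-length transition intervals. The $L^1$ bound for $j\ge1$ on the $H$-factor uses the support length $L$ against the gain $L^{-j}\le L^{-1}$. Everything else in the conclusion, including the dependence of $C,c,R_0$ only on $(\lambda,s)$, is inherited from Proposition~\ref{prop:abstract-strip}.
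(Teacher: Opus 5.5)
Your proposal is correct and is essentially the paper's proof: the paper likewise notes that the support and uniform sup-norm bounds are built into the window, takes the $L^1$ and Gevrey derivative bounds from Lemma~\ref{lem:G1}, and applies Proposition~\ref{prop:abstract-strip} with $\sigma=1/2$. Your explicit check that $0\le H_\lambda, r_L\le 1$, and your remark that only $L\ge3$ matters because $L=\lambda\ell\to\infty$, fill in details the paper leaves implicit.
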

\begin{proof}
The support and uniform $L^\infty$ bounds are built into the window.
Lemma~\ref{lem:G1} supplies
\eqref{eq:abstract-gevrey-assumptions}; hence
Proposition~\ref{prop:abstract-strip} applies with $\sigma=1/2$.
\end{proof}

The factor $e^{L/4}$ in Lemma~\ref{lem:G2} is not an artifact.  A nontrivial
zero satisfies $|\Im\gamma_\rho|=|\beta-1/2|<1/2$, and the support bound
$|u|\le L/2$ would by itself contribute $e^{L/4}$.  Squaring a sampled
evaluation therefore produces the $X^{1/2}$ factor in the tail estimate.
Keeping this factor explicit is what determines the necessary size of the
enlargement $D_0$.

Define
\[
 J=\frac{2}{L^3}\int_0^L yg(y)\,dy
 =\frac1{L^3}\iint|u-u'|\phi(u)^2\phi(u')^2\,du\,du'.
\]
Put $I_0=[-1/2,1/2]$ and
\[
 (Tv)(x)=\int_{I_0}|x-y|v(y)\,dy,\qquad
 \mathcal C_\lambda(v)=
 \frac{\lambda(\int_{I_0}v)^2}{\int_{I_0}v^2+\lambda^2\langle Tv,v\rangle}.
\]
For the candidate $v_\lambda(x)=\cos(\sqrt2\lambda x)$, define
$ a_\lambda^*=\int_{I_0}v_\lambda$,
$ b_\lambda^*=\int_{I_0}v_\lambda^2$, and
$ J_\lambda^*=\langle Tv_\lambda,v_\lambda\rangle$.
\begin{proposition}[Limiting variational problem]\label{prop:variational}
For $0<\lambda<1$, the maximum of $\mathcal C_\lambda$ over nonzero
nonnegative $v\in L^2(I_0)$ is
\[
 c_\lambda^*=\frac{\sqrt2\tan(\lambda/\sqrt2)}
 {1+(\lambda/\sqrt2)\tan(\lambda/\sqrt2)}.
\]
Up to a positive scalar, the unique maximizer is
$v_\lambda(x)=\cos(\sqrt2\lambda x)$.
\end{proposition}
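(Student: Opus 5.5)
The plan is to treat the denominator as a quadratic form and apply Cauchy--Schwarz in the inner product it defines. Put $Q_\lambda(v)=\int_{I_0}v^2+\lambda^2\langle Tv,v\rangle$, which is the quadratic form of the bounded self-adjoint operator $A_\lambda=I+\lambda^2T$ on $L^2(I_0)$. The first and decisive step is to show that $A_\lambda$ is positive definite, with a spectral gap, for $0<\lambda<1$. Granting this, $\langle v,w\rangle_\lambda=\langle A_\lambda v,w\rangle$ is an inner product equivalent to the $L^2$ one. Cauchy--Schwarz then gives
\[
 \Bigl(\int_{I_0}v\Bigr)^2=\langle v,A_\lambda^{-1}1\rangle_\lambda^2\le Q_\lambda(v)\,\langle A_\lambda^{-1}1,1\rangle .
\]
Hence $\mathcal C_\lambda(v)\le\lambda\langle A_\lambda^{-1}1,1\rangle$, with equality exactly when $v$ is a scalar multiple of $w=A_\lambda^{-1}1$. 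It then suffices to find $w$ explicitly, check that it is a positive function, and evaluate $\lambda\int w$. Positivity of $w$ shows the bound is attained inside the nonnegative cone. Uniqueness up to a positive scalar is the equality case of Cauchy--Schwarz.

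To solve $w+\lambda^2Tw=1$, use the identity $(Tv)''=2v$, which follows from $\partial_x^2|x-y|=2\delta_{x-y}$. Differentiating the equation twice gives $w''+2\lambda^2w=0$. The operator $A_\lambda$ commutes with the reflection $x\mapsto-x$, so by uniqueness $w$ is even, and therefore $w=A\cos(kx)$ with $k=\sqrt2\lambda$.

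For this cosine, $Tw=A\{-(2/k^2)\cos(kx)+\alpha\}$. The linear term vanishes by evenness. Evaluating at $x=1/2$, where $T(\cos k\cdot)(1/2)=\int_{I_0}(1/2-y)\cos(ky)\,dy=\sin(k/2)/k$, gives
\[
 \alpha=\frac{\sin(k/2)}{k}+\frac{\cos(k/2)}{\lambda^2}.
\]
Since $\lambda^2\cdot 2/k^2=1$, the cosine terms cancel in $w+\lambda^2Tw$, and the equation reduces to $\lambda^2\alpha A=1$. Because $k/2=\lambda/\sqrt2<\pi/2$, we have $\alpha>0$, so $A>0$ and $w>0$ on $I_0$. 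Finally,
\[
 \lambda\int_{I_0}w=\frac{2\lambda\sin(k/2)}{\lambda^2\sin(k/2)+k\cos(k/2)},
\]
which after dividing by $\cos(\lambda/\sqrt2)$ becomes $\sqrt2\tan(\lambda/\sqrt2)/(1+(\lambda/\sqrt2)\tan(\lambda/\sqrt2))=c_\lambda^*$. As a check, inserting $v_\lambda$ into $\mathcal C_\lambda$ directly should reproduce this value via $a_\lambda^*,b_\lambda^*,J_\lambda^*$.

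The main obstacle is the positivity of $A_\lambda$, since $T$ is indefinite. Because $T$ is compact and self-adjoint, it suffices to show that every negative eigenvalue $\mu$ of $T$ satisfies $\lambda^2|\mu|<1$. If $Tv=\mu v$ with $\mu<0$, then $\mu v''=2v$, so $v=\cos(\omega x)$ or $\sin(\omega x)$ with $\mu=-2/\omega^2$. The requirement becomes $\omega>\sqrt2\lambda$, and I would show $\omega\ge\sqrt2$ for every negative eigenvalue. The admissible $\omega$ are cut out by boundary conditions at $x=\pm1/2$, obtained by comparing $Tv$ with its explicit integral, much as $\alpha$ was computed above.

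For even eigenfunctions the constant term must vanish, which gives $\sin(\omega/2)/\omega=0$ and hence $\omega\ge2\pi$. For odd eigenfunctions one gets a transcendental condition linking $\tan(\omega/2)$ to $\omega/2$, whose least positive root I expect to exceed $\sqrt2$ comfortably, numerically near $2.3$ or larger. Establishing this root bound rigorously, for instance by a sign check of the defining function on $(0,\sqrt2]$, is the step I would carry out most carefully. With it, $A_\lambda\ge(1-\lambda^2/\omega_{\min}^2\cdot2)I>0$ for $\lambda<1$, and this completes the argument.
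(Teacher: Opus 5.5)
Your core argument is the paper's: Cauchy--Schwarz in the inner product $\langle A_\lambda\cdot,\cdot\rangle$ with $A_\lambda=I+\lambda^2T$, equality exactly at $w=A_\lambda^{-1}\mathbf{1}$, then $(Tw)''=2w$ forcing $w''+2\lambda^2w=0$, reflection symmetry making $w$ even, and explicit evaluation. Your constant $\alpha$ is correct. So are the normalization $1/(\lambda^2\alpha)=1/(\cos\theta+\theta\sin\theta)$ with $\theta=\lambda/\sqrt2$ (the paper's $D_\lambda$), the positivity of $w$, and the final value $c_\lambda^*$.

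The problem is the coercivity step, which you call decisive. As written, your eigenvalue analysis has the wrong boundary conditions.

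\emph{Even case.} For $v=\cos(\omega x)$ the constant term of $Tv$ is the same $\alpha$ you computed earlier with $k$ replaced by $\omega$, namely $\sin(\omega/2)/\omega+2\cos(\omega/2)/\omega^2$, not $\sin(\omega/2)/\omega$. The even condition is therefore $\omega\tan(\omega/2)=-2$. Its least positive root is $\omega\approx5.60<2\pi$, so your claim ``$\omega\ge2\pi$'' is false.

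\emph{Odd case.} For $v=\sin(\omega x)$ one has $(Tv)'(x)=2(\cos(\omega/2)-\cos(\omega x))/\omega$. Hence $Tv=-2\omega^{-2}\sin(\omega x)+2\omega^{-1}\cos(\omega/2)\,x$, and the condition is simply $\cos(\omega/2)=0$, with least root $\omega=\pi$. It is not a transcendental equation.

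The conclusion you need is nevertheless true: $\omega_{\min}=\pi>\sqrt2$, so the most negative eigenvalue of $T$ is $-2/\pi^2$. The route is therefore repairable, but as submitted the positivity of $A_\lambda$ is not established.

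More to the point, the eigenvalue analysis is unnecessary, because indefiniteness of $T$ is irrelevant once you have a norm bound. The paper uses Schur's test: $\sup_{|x|\le1/2}\int_{I_0}|x-y|\,dy=\sup_{|x|\le1/2}(x^2+\tfrac14)=\tfrac12$. This gives $\|T\|_{2\to2}\le\tfrac12$ and $\langle A_\lambda v,v\rangle\ge(1-\lambda^2/2)\|v\|_2^2$. Even more crudely, $|x-y|\le1$ on $I_0^2$ gives $|\langle Tv,v\rangle|\le\|v\|_1^2\le\|v\|_2^2$, so $A_\lambda\ge(1-\lambda^2)I$ on exactly the stated range $0<\lambda<1$. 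Replace your spectral paragraph with either bound and the proof is complete.
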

\begin{proof}
Schur's test gives $\|T\|_{2\to2}\le1/2$, since
$\sup_x\int_{I_0}|x-y|dy=1/2$. Thus $A_\lambda=I+\lambda^2T$ is
self-adjoint and
\[
 \langle A_\lambda v,v\rangle\ge(1-\lambda^2/2)\|v\|_2^2,
\]
so it is coercive. If
$w=A_\lambda^{-1}\mathbf1$, Cauchy--Schwarz in the $A_\lambda$ inner
product gives
\[
 |\langle v,\mathbf1\rangle|^2\le
 \langle A_\lambda v,v\rangle
 \langle A_\lambda^{-1}\mathbf1,\mathbf1\rangle,
\]
with equality precisely when $v$ is proportional to $w$.

The equation $A_\lambda w=\mathbf1$ and $(Tw)''=2w$ (in the
distributional sense, hence classically after bootstrapping) imply
$w''+2\lambda^2w=0$. Reflection commutes with $A_\lambda$ and fixes
$\mathbf1$, so uniqueness makes $w$ even. Hence
$w=D_\lambda\cos(\sqrt2\lambda x)$.  Set
$\theta=\lambda/\sqrt2$.  A direct evaluation gives
\[
 v_\lambda+\lambda^2Tv_\lambda
 =(\cos\theta+\theta\sin\theta)\mathbf1,
\]
so the equation $A_\lambda w=\mathbf1$ forces
\[
 D_\lambda=\frac1{\cos\theta+\theta\sin\theta}>0.
\]
Thus the nonnegativity constraint is inactive.  Moreover,
\[
 a_\lambda^*=\frac{\sin\theta}{\theta}.
\]
Multiplication by $v_\lambda$ and integration yields
$b_\lambda^*+\lambda^2J_\lambda^*
=(\cos\theta+\theta\sin\theta)a_\lambda^*$, and therefore
\[
 \mathcal C_\lambda(v_\lambda)
 =\frac{\lambda a_\lambda^*}{\cos\theta+\theta\sin\theta}
 =\frac{\sqrt2\tan\theta}{1+\theta\tan\theta}.
\]
\end{proof}

\begin{lemma}[Endpoint stability]\label{lem:G4}
$a=a_\lambda^*+O(L^{-1})$, $b=b_\lambda^*+O(L^{-1})$ and
$J=J_\lambda^*+O(L^{-1})$.  Consequently
\[
 c_\lambda(\phi):=\frac{\lambda a^2}{b+\lambda^2J}
 =c_\lambda^*+O(L^{-1}),\quad
 c_\lambda^*=\frac{\sqrt2\tan(\lambda/\sqrt2)}
 {1+(\lambda/\sqrt2)\tan(\lambda/\sqrt2)}.
\]
\end{lemma}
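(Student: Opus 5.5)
The plan is to rescale each of $a$, $b$, $J$ to the unit interval and compare $\phi_{\lambda,L}$ with the untruncated profile $H_\lambda(u/L)$. By construction $\phi(u)=H_\lambda(u/L)r_L(u)$ with $0\le r_L\le1$, and $r_L(u)=1$ except when $u$ lies in the two transition intervals $L/2-1<|u|\le L/2$. (Indeed $R_s(L/2\mp u)=1$ once $L/2\mp u\ge1$, and the other factor is then $1$ as well because $L\ge3$.) Hence
\[
 \phi(u)^2=v_\lambda(u/L)+E(u),\qquad |E(u)|\le \|v_\lambda\|_\infty\,\mathbf 1_{\{L/2-1<|u|\le L/2\}},
\]
where we use $H_\lambda^2=v_\lambda$. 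The same description holds for $\phi^4=v_\lambda(u/L)^2+E_4(u)$, with $E_4$ bounded and supported on the same set of total length $2$.

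For $a$ and $b$, substitute $u=Lx$:
\[
 a=L^{-1}\int\phi^2=\int_{I_0}v_\lambda(x)\,dx+L^{-1}\int E=a_\lambda^*+O(L^{-1}),
\]
and in the same way $b=b_\lambda^*+O(L^{-1})$. The constants depend only on $\lambda$.

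For $J$, use the double-integral form $J=L^{-3}\iint|u-u'|\phi(u)^2\phi(u')^2\,du\,du'$ and expand both factors $\phi^2=v_\lambda(\cdot/L)+E$. The main term, after the substitution $u=Lx$, $u'=Ly$, is exactly $\iint_{I_0^2}|x-y|v_\lambda(x)v_\lambda(y)\,dx\,dy=J_\lambda^*$. Each cross term has the form $L^{-3}\iint|u-u'|\,v_\lambda(u/L)\,E(u')\,du\,du'$. Here $|u-u'|\le L$, the $u$-integration contributes at most $L$, and the $u'$-integration at most $2\|v_\lambda\|_\infty$, so each cross term is $O(L^{-1})$. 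The $E\cdot E$ term is $O(L^{-2})$. Altogether $J=J_\lambda^*+O(L^{-1})$.

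The consequence for $c_\lambda(\phi)$ follows because $(a,b,J)\mapsto \lambda a^2/(b+\lambda^2J)$ is smooth near $(a_\lambda^*,b_\lambda^*,J_\lambda^*)$. The denominator there is $b_\lambda^*+\lambda^2J_\lambda^*=\langle A_\lambda v_\lambda,v_\lambda\rangle\ge(1-\lambda^2/2)\|v_\lambda\|_2^2>0$ by the coercivity in Proposition~\ref{prop:variational}. So the denominator stays bounded below for large $L$, and a first-order Taylor (or Lipschitz) estimate gives $c_\lambda(\phi)=\mathcal C_\lambda(v_\lambda)+O(L^{-1})$. Finally $\mathcal C_\lambda(v_\lambda)=c_\lambda^*$ by Proposition~\ref{prop:variational}. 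For $\lambda=1$, if needed, coercivity still holds since $1-1/2>0$, and the explicit evaluation in that proof is unchanged.

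The only point needing care is the bookkeeping of the transition region: checking that $r_L\equiv1$ off a set of measure $2$ uniformly in $L\ge3$, and that $\|H_\lambda\|_\infty$ is bounded. Both are immediate from the construction, so no step is expected to be a genuine obstacle.
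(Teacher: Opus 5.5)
Your proposal is correct and follows the paper's proof essentially step for step. The paper likewise writes $\phi^2=V_L+\delta$ with $V_L(u)=v_\lambda(u/L)\mathbf 1_{[-L/2,L/2]}$ and $\delta$ bounded and supported on endpoint intervals of total length two, reads off $a$ and $b$ directly, bounds the cross terms in $J$ using $|u-u'|\le L$, and concludes by smoothness of the rational functional together with Proposition~\ref{prop:variational}; your explicit lower bound on the denominator $b_\lambda^*+\lambda^2J_\lambda^*$ via coercivity is a small justification that the paper leaves implicit.
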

\begin{proof}
Let $V_L(u)=v_\lambda(u/L)\mathbf1_{[-L/2,L/2]}$ and
$\delta=\phi^2-V_L$.  The perturbation is supported in endpoint intervals of total length two,
so $\|\delta\|_1=O(1)$ and $\|V_L\|_1=O(L)$.  The assertion for $a$ is immediate.  For $b$, the uniform bounds
$0\le\phi^2,V_L\le1$ give
\[
 |\phi^4-V_L^2|=|\delta|\,|\phi^2+V_L|\le2|\delta|,
\]
so $L^{-1}\int|\phi^4-V_L^2|=O(L^{-1})$.
For $J$, expand the exact double integral after replacing $V_L$ by $V_L+\delta$.  Since
$|u-u'|\le L$, the difference is at most
\[
 L(2\|V_L\|_1\|\delta\|_1+\|\delta\|_1^2)=O(L^2).
\]
Division by $L^3$ proves the claim.  The displayed rational functional is smooth near the
limiting triple. Proposition~\ref{prop:variational} identifies the displayed
value $c_\lambda^*$.
\end{proof}

\subsection{Abstract and Dirichlet-family nuclear localization}\label{sec:tail}
We first isolate the mechanism without $L$-function notation.

\begin{theorem}[Abstract nuclear localization]\label{thm:abstract-localization}
Fix $s>1$ and $\sigma>0$.  Let $J=[a,b]\subset\mathbb R$, let
$\mathcal T_L\subset J$ be a finite sampling set satisfying
\begin{equation}\label{eq:abstract-sampling-density}
 \#(\mathcal T_L\cap[x,x+1])\le C_\tau L\qquad(x\in\mathbb R),
\end{equation}
and let $\varphi_L\in C_c^\infty(\mathbb R)$.  Assume that for some
$A_L\ge1$ and fixed $c,R_0>0$,
\begin{equation}\label{eq:abstract-strip-input}
 |\widehat\varphi_L(r+iy)|\le A_Le^{-c|r|^{1/s}}
 \qquad(|y|\le\sigma,\ |r|\ge R_0).
\end{equation}
Let $\mathcal Z$ be a multiset in $|\Im z|\le\sigma$, with multiplicities
$m_z$, and suppose that for some $\mathfrak n\ge1$ and $p\ge0$,
\begin{equation}\label{eq:abstract-local-count}
 \sum_{\substack{z\in\mathcal Z\\ t<\Re z\le t+1}}m_z
 \le C_Z\mathfrak n\bigl(1+\dist(t,J)\bigr)^p
 \qquad(t\in\mathbb R).
\end{equation}
For $z\in\mathcal Z$, put
$u_z=(\widehat\varphi_L(z-\tau))_{\tau\in\mathcal T_L}$ and, for
$D\ge R_0+2$, define
\[
 E_D=\sum_{\substack{z\in\mathcal Z\\dist(\Re z,J)\ge D}}
 m_z u_zu_z^T.
\]
Then the series converges in nuclear norm and there are
$c_1>0$ and $M=M(s,p)$ such that
\begin{equation}\label{eq:abstract-nuclear-tail}
 \|E_D\|_1\le
 C A_L^2L\mathfrak n(1+D)^M e^{-c_1D^{1/s}}.
\end{equation}
If $\mathfrak a_L\ge a_0L^2$ and
$\widehat E_D=E_D/\mathfrak a_L$, then
\begin{equation}\label{eq:abstract-normalized-tail}
 \|\widehat E_D\|_1\le
 C\frac{A_L^2\mathfrak n}{L}(1+D)^M e^{-c_1D^{1/s}}.
\end{equation}
Consequently, for every matrix $G$ of the same size and
$\varepsilon=\|\widehat E_D\|_1$,
\begin{equation}\label{eq:abstract-trace-perturbation}
 |\tr\widehat E_D|\le\varepsilon,
 \qquad
 \bigl|\|G-\widehat E_D\|_F^2-\|G\|_F^2\bigr|
 \le\varepsilon(2\|G\|_F+\varepsilon).
\end{equation}
\end{theorem}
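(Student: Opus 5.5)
The plan is to bound the nuclear norm termwise, then sum over unit shells. The triangle inequality and $\|uu^T\|_1=\|u\|_2^2$ (transpose, not conjugate transpose, as fixed in Section~\ref{sec:notation}) give
\[
 \|E_D\|_1\le\sum_{\dist(\Re z,J)\ge D}m_z\|u_z\|_2^2
 =\sum_{\dist(\Re z,J)\ge D}m_z\sum_{\tau\in\mathcal T_L}|\widehat\varphi_L(z-\tau)|^2 .
\]
Absolute convergence of the right side then gives convergence of the series in nuclear norm, since finite partial sums form a Cauchy sequence in $\|\cdot\|_1$.

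Next I bound each $\|u_z\|_2^2$. Write $z=x+iy$ with $|y|\le\sigma$ and $\delta=\dist(x,J)\ge D$. Every $\tau\in\mathcal T_L\subset J$ satisfies $|x-\tau|\ge\delta\ge R_0+2$, so \eqref{eq:abstract-strip-input} applies to every term. I group the samples by the integer part of $|x-\tau|-\delta$. Each unit block contains at most $2C_\tau L$ points by \eqref{eq:abstract-sampling-density}, one from each side of $x$. This gives
\[
 \|u_z\|_2^2\le 2C_\tau LA_L^2\sum_{j\ge0}e^{-2c(\delta+j)^{1/s}} .
\]
Since $(\delta+j)^{1/s}\ge$ a constant times $\delta^{1/s}+j^{1/s}$ by concavity, the sum is $\ll e^{-c'\delta^{1/s}}$. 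Alternatively I can compare it with $\int_\delta^\infty e^{-2cr^{1/s}}dr\ll\delta^{1-1/s}e^{-2c\delta^{1/s}}$, which is polynomial times exponential. Either way, $\|u_z\|_2^2\ll A_L^2L(1+\delta)e^{-2c\delta^{1/s}}$.

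For the shell sum, I decompose $\{x:\dist(x,J)\ge D\}$ into unit intervals $(t,t+1]$ with $\dist(t,J)=D+n-O(1)$, two per $n\ge0$ (one on each side of $J$). The local count \eqref{eq:abstract-local-count} bounds the multiplicity in each by $C_Z\mathfrak n(1+D+n)^p$. Summing gives
\[
 \|E_D\|_1\ll A_L^2L\mathfrak n\sum_{n\ge0}(1+D+n)^{p+1}e^{-2c(D+n)^{1/s}} .
\]
Comparing with the integral $\int_D^\infty(1+r)^{p+1}e^{-2cr^{1/s}}dr$ and substituting $r=w^s$ bounds this by $(1+D)^Me^{-c_1D^{1/s}}$ with $M=p+2$ and any $c_1<2c$. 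This proves \eqref{eq:abstract-nuclear-tail}. Dividing by $\mathfrak a_L\ge a_0L^2$ gives \eqref{eq:abstract-normalized-tail}.

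The perturbation statements are standard. First, $|\tr M|\le\|M\|_1$. Second, $\|G-\widehat E\|_F^2-\|G\|_F^2=-2\Re\tr(G^*\widehat E)+\|\widehat E\|_F^2$, with $|\tr(G^*\widehat E)|\le\|G\|_{\mathrm{op}}\|\widehat E\|_1\le\|G\|_F\varepsilon$ and $\|\widehat E\|_F\le\|\widehat E\|_1=\varepsilon$. Together these give \eqref{eq:abstract-trace-perturbation}.

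The main obstacle is bookkeeping rather than depth. The shell sum needs endpoint conventions so that the polynomial growth of the local count and the sampling multiplicity are absorbed uniformly into $(1+D)^M$. It also needs the strip hypothesis to be used only where $|r|\ge R_0$, which is guaranteed by requiring $D\ge R_0+2$. I also have to check that the constants $C,c_1,M$ depend only on $s,\sigma,c,C_\tau,C_Z,p$ and not on $L$ or $J$.
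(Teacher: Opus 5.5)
Your proposal is correct and follows essentially the same route as the paper: you bound each term by $\|u_zu_z^T\|_1=\|u_z\|_2^2$, count samples in unit blocks via the density hypothesis to get a stretched-exponential bound on $\|u_z\|_2^2$, sum over exterior unit shells against the local count with an integral comparison, and finish with the standard trace and Frobenius perturbation identities. One small slip: the inequality $(\delta+j)^{1/s}\ge\frac12(\delta^{1/s}+j^{1/s})$ follows from monotonicity via $\max\{\delta,j\}$, not from concavity, and that route gives only $e^{-c\delta^{1/s}}$ rather than $e^{-2c\delta^{1/s}}$; this is harmless because the theorem only asks for some $c_1>0$.
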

\begin{proof}
If $x=\Re z$ and $D_z=\dist(x,J)$, then
\eqref{eq:abstract-sampling-density} and a unit-shell comparison give
\begin{align*}
 \sum_{\tau\in\mathcal T_L}e^{-2c|x-\tau|^{1/s}}
 &\ll L\sum_{j\ge0}e^{-2c(D_z+j)^{1/s}}\\
 &\ll L(1+D_z^{1-1/s})e^{-c'D_z^{1/s}}.
\end{align*}
The last estimate follows after substituting $v=t^{1/s}$ in the corresponding
integral and decreasing the exponential constant.  Hence
\[
 \|u_z\|_2^2\ll A_L^2L(1+D_z^{1-1/s})e^{-c'D_z^{1/s}}.
\]
For a complex vector $u$, the matrix $uu^T=u\,\overline u^{\,*}$ has the
single nonzero singular value $\|u\|_2^2$.  Thus
$\|u_zu_z^T\|_1=\|u_z\|_2^2$.  Sum this estimate over the two exterior
unit-shell families and use \eqref{eq:abstract-local-count}.  A second
integral comparison absorbs the resulting fixed polynomial into
$(1+D)^M$ and decreases $c'$ to $c_1$, proving
\eqref{eq:abstract-nuclear-tail}.  Division by $\mathfrak a_L$ gives
\eqref{eq:abstract-normalized-tail}.  Finally
$|\tr H|\le\|H\|_1$, $\|H\|_F\le\|H\|_1$, and the Frobenius inner-product
identity give \eqref{eq:abstract-trace-perturbation}.
\end{proof}

We now specialize the abstract inputs.  Let $I=(T,2T]$,
$J=[T,2T]$, and $I'=(T-D_0,2T+D_0]$.  By
Lemma~\ref{lem:height-margins}, $D_0/T\to0$.  The involution
$\rho\mapsto1-\bar\rho$ preserves the real ordinate, so the inside and
outside parts of the zero-side sum are separately Hermitian.
\begin{proposition}[Dirichlet-family nuclear tail]\label{prop:G3}
For every fixed $B>0$, $K=K(B,\lambda,s)$ may be chosen so that, for all
sufficiently large $q$ and uniformly for $\chi\in\F_q$, the normalized
contribution $\widehat E_\chi$ of nontrivial zeros with ordinate outside
$I'$ satisfies $\|\widehat E_\chi\|_1\le(qT)^{-B}$.
\end{proposition}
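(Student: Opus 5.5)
The plan is to apply Theorem~\ref{thm:abstract-localization} directly, with the inputs taken from the earlier lemmas. We use the interval $J=[T,2T]$. The sampling set is $\mathcal T_L=\{\tau_k:0\le k<d\}\subset J$, and the window is $\varphi_L=\phi_{\lambda,L}$. For each $\chi\in\F_q$ the multiset $\mathcal Z$ is $\{\gamma_\rho\}$ with multiplicities $m_\rho$. The inputs are checked as follows.

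\begin{itemize}
\item The strip condition holds with $\sigma=1/2$, since $|\Im\gamma_\rho|=|\beta-\tfrac12|<\tfrac12$.
\item The sampling density bound \eqref{eq:abstract-sampling-density} holds with $C_\tau=1$ up to an additive $1$. The spacing is $h=2\pi/L$, so at most $L/(2\pi)+1\le L$ points lie in any unit interval once $L\ge3$.
\item The strip-decay hypothesis \eqref{eq:abstract-strip-input} is Lemma~\ref{lem:G2}, with $A_L=Ce^{L/4}=CX^{1/4}$. Since $\phi$ is real and even, $\widehat\phi(r+iy)=\widehat\phi(-r-iy)$, so both signs of $y$ are covered.
\item The local count \eqref{eq:abstract-local-count} follows from \eqref{eq:local-zero-count}. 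Note $\Re\gamma_\rho=\gamma$. Then $N_\chi(t,t+1)\ll\log(q(|t|+3))\le \ell\,(1+\dist(t,J))$ up to constants, because $\log(q(|t|+3))\ll \log q+\log(2T+\dist(t,J)+3)\ll \ell+\log(1+\dist(t,J))$. So we may take $\mathfrak n=\ell$, $p=1$, and $C_Z$ absolute. This holds uniformly in $\chi$, since every $\chi\in\F_q$ is primitive of conductor $q$.
\end{itemize}

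Next we identify $\widehat E_\chi$ with $\widehat E_D$ for $D=D_0$, $\mathfrak a_L=aL^2$, and $a_0>0$. Here $a=a_\lambda^*+O(L^{-1})$ is bounded below by Lemma~\ref{lem:G4}, and $a_\lambda^*>0$. The zero-side representation \eqref{eq:G-two-sides} writes $G_\chi=\sum_\rho m_\rho u_\rho u_\rho^T$ with $u_\rho=(\widehat\phi(\gamma_\rho-\tau_k))_k$. Zeros with ordinate outside $I'=(T-D_0,2T+D_0]$ satisfy $\dist(\gamma,J)\ge D_0$. The only boundary issue is the endpoint $T-D_0$, which has measure-zero ambiguity, so the outside part is dominated termwise and the estimate is unaffected. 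The condition $D_0\ge R_0+2$ holds for large $q$ because $D_0=(K\ell)^s\to\infty$. Then \eqref{eq:abstract-normalized-tail} gives
\[
 \|\widehat E_\chi\|_1\ll \frac{X^{1/2}\ell}{L}(1+D_0)^M e^{-c_1K\ell}
 \ll (1+(K\ell)^s)^M e^{(\lambda/2-c_1K)\ell}.
\]
Here $D_0^{1/s}=K\ell$, and the constants depend only on $s$ and $\lambda$, not on $K$.

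The key step, and the only real point, is that $D_0^{1/s}$ is exactly linear in $\ell$. This lets the stretched-exponential decay beat the unavoidable $X^{1/2}=e^{\lambda\ell/2}$ growth from squaring the strip bound. It is also why the buffer must be $(K\ell)^s$ rather than a multiple of $\ell$. Since $qT=2\pi e^{\ell}$, we have $(qT)^{-B}\asymp e^{-B\ell}$. We choose $K$ with $c_1K>\lambda/2+B+1$. The polynomial factor $\ell^{sM}$ is then absorbed by $e^{-\ell}$ for large $q$, which yields $\|\widehat E_\chi\|_1\le(qT)^{-B}$.

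The one thing to verify carefully is that $c_1$ and $M$ in Theorem~\ref{thm:abstract-localization} do not depend on $D$ or $K$. This is true because they come only from $s$, $p$, and the constant $c$ in Lemma~\ref{lem:G2}. Hence choosing $K$ afterwards is legitimate.
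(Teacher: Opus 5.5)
Your proposal is correct and follows the paper's proof essentially step for step. Like the paper, you apply Theorem~\ref{thm:abstract-localization} with $A_L=CX^{1/4}$, $\mathfrak n=\ell$, $p=1$, divide by $aL^2$, and choose $c_1K>\lambda/2+B+1$ using $D_0^{1/s}=K\ell$. There is one cosmetic slip: the point where ``ordinate outside $I'$'' and ``$\dist(\gamma,J)\ge D_0$'' differ is $\gamma=2T+D_0$, not $T-D_0$, and your termwise-domination remark (or applying the theorem to the sub-multiset of zeros outside $I'$) disposes of it.
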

\begin{proof}
Apply Theorem~\ref{thm:abstract-localization} to the multiset
\[
 \mathcal Z_\chi=\{\gamma_\rho=\gamma-i(\beta-\tfrac12):
 \rho=\beta+i\gamma\in Z(\chi)\},
\]
with zero multiplicities retained.  The lattice
$\{\tau_k:0\le k<d\}\subset J$ has at most $1+L/(2\pi)$ points in every
unit interval.  Lemma~\ref{lem:G2} supplies
\eqref{eq:abstract-strip-input} with $\sigma=1/2$ and
$A_L=CX^{1/4}$.  If $D=\dist(t,J)$, then
$|t|+3\le2T+D+4$, and \eqref{eq:local-zero-count} gives
\[
 N_\chi(t,t+1)\ll\log(q(|t|+3))\ll\ell(1+D).
\]
Thus \eqref{eq:abstract-local-count} holds with $\mathfrak n=\ell$ and
$p=1$.  Theorem~\ref{thm:abstract-localization}, followed by the lower bound
$a\gg1$, gives
\[
 \|E_\chi\|_1\ll
 X^{1/2}L\ell(1+D_0)^M e^{-cD_0^{1/s}},
\]
and, after division by $aL^2$,
\[
 \|\widehat E_\chi\|_1
 \ll \ell^{M'}\exp\{(\lambda/2-cK)\ell\}.
\]
Since $qT=2\pi e^\ell$, choosing $K$ so that
$cK-\lambda/2>B+1$ proves the result.  The local count includes
multiplicity and signed ordinates.  A possible exceptional real zero has
ordinate zero and lies at distance at least $T$; the strip
$|\Im\gamma_\rho|<1/2$ covers every off-line depth; and trivial zeros belong
to the gamma side of the explicit formula rather than this nontrivial-zero
sum.
\end{proof}

\begin{remark}[Why Gevrey regularity is needed]\label{rem:gevrey-need}
A fixed $C^r$ taper would give only polynomial Fourier decay.  In the present
family the unnormalized contribution of a zero carries the factor
$X^{1/2}=\exp(\lambda\ell/2)$, so no fixed power of the
conductor-logarithmic buffer $D_0=(K\ell)^s$ can absorb it.  The Gevrey
estimate instead
gives
\[
 X^{1/2}\exp(-cD_0^{1/s})
 =\exp\{(\lambda/2-cK)\ell\},
\]
and the freely chosen constant $K$ produces any prescribed negative power
of $qT$.  This is the analytic point at which the short, one-modulus family
argument differs from a fixed-character height-aspect argument.
\end{remark}

The same local count gives, uniformly,
\[
 \sum_{\chi\in\F_q}N_\chi(I'\setminus I)\ll qD_0\ell=o(qT\ell).
\]
The denominator asymptotic used from this point onward is the family formula
\eqref{eq:Nfam}, obtained above from the symmetric zero count and character
conjugation.

\section{Character averaging and end effects}\label{sec:average}
We shall use the following weighted Hilbert inequality.
Let $\lambda_1,\ldots,\lambda_N$ be distinct real numbers and put
$\delta_n=\min_{m\ne n}|\lambda_n-\lambda_m|$.  Corollary~2 of
\cite{CLHilbert} proves
\[
 \left|\sum_{m\ne n}\frac{x_m\overline{x_n}}
 {\lambda_m-\lambda_n}\right|
 \le2\pi\sum_{n=1}^N\frac{|x_n|^2}{\delta_n};
\]
the weighted form originates in \cite{MV}.  The bilinear version used below
follows with the same constant.  Indeed, if
$H_{mn}=(\lambda_m-\lambda_n)^{-1}$ off the diagonal and $H_{nn}=0$, then
$iH$ is Hermitian.  With $D=\operatorname{diag}(\delta_n)$, the displayed
quadratic estimate gives
$\|D^{1/2}(iH)D^{1/2}\|_{2\to2}\le2\pi$.  Consequently
\[
 \left|\sum_{m\ne n}\frac{a_m\overline{b_n}}
 {\lambda_m-\lambda_n}\right|
 \le2\pi
 \left(\sum_n\frac{|a_n|^2}{\delta_n}\right)^{1/2}
 \left(\sum_n\frac{|b_n|^2}{\delta_n}\right)^{1/2}.
\]
This is the only form of Hilbert's inequality used in the proof.

For $X<q-1$, character orthogonality gives
\[
 \sum_{\chi\bmod q}\chi(n)\overline{\chi(m)}=(q-1)\mathbf1_{n\equiv m(q)},
 \quad
 \sum_{\chi\bmod q}\chi(n)\chi(m)=(q-1)\mathbf1_{nm\equiv1(q)}.
\]
Thus the full-family conjugate-frequency off-diagonal vanishes, and deleting $\chi_0$ leaves
one individual correction of size $O(L^2X)$.  For the same-sign
term, inversion is an involution on
$S=\{2\le n<X:n^{-1}\bmod q<X\}$, whence
\[
 \sum_{n\in S}|a_na_{\iota(n)}|\le\sum_{n<X}|a_n|^2\ll L^2.
\]
To record the kernel estimates, write
\[
 P_\chi(t)=-\frac1{2\pi}\sum_{n<X}a_n
 (\chi(n)n^{-it}+\overline{\chi(n)}n^{it}),\qquad a_n=\Lambda(n)n^{-1/2}.
\]
In the conjugate-frequency term, summing over the full family imposes $n\equiv m\pmod q$;
as $n,m<X<q$, the off-diagonal is zero.  After deleting $\chi_0$, the
weighted Hilbert input above bounds the remaining individual off-diagonal by
$O(L^2X)$.  More explicitly, the frequency spacing at $\log n$ is
$\delta_n\asymp1/n$, so the weighted Hilbert bound is
\[
 \sum_{n<X}\frac{|a_n|^2}{\delta_n}
 \ll\sum_{n<X}\Lambda(n)^2\ll XL.
\]
The outside kernel mass contributes one further factor
$\int\Phi^2=2\pi bL$, proving the asserted $O(XL^2)$ bound. In the
same-sign term, full-family summation
imposes $nm\equiv1\pmod q$.  The inner time integral equals a difference of two unit complex
numbers divided by $i\log(nm)$, so its modulus is at most $2/\log(nm)\le2/\log4$.
Together with $\int\Phi^2=2\pi bL$ and the involution estimate above this gives $O(qL^3)$.
Deleting $\chi_0$ costs the individual bound $O(LX)$, or harmlessly $O(L^2X)$.  Hence the
total is $O(qL^3+L^2X)=o(qTL^3)$.

Splitting the characters by parity,
\[
 \sum_{\chi(-1)=\pm1}\chi(n)=\frac{q-1}{2}
 (\mathbf1_{n\equiv1(q)}\pm\mathbf1_{n\equiv-1(q)}).
\]
Both full parity sums vanish for $2\le n<X<q-1$; deleting the even principal character leaves
only an individual correction.

\begin{lemma}[Prime off-diagonal terms]\label{lem:prime-offdiag}
In the family sum of the prime--prime bilinear form, the total
conjugate-frequency off-diagonal is $O(XL^2)$ and the total same-frequency
contribution is $O(qL^3+XL^2)$.
\end{lemma}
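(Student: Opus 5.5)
We first make the prime--prime bilinear form explicit and then read off the two families of off-diagonal terms. In the second trace one meets $\sum_{\chi\in\F_q}\iint\Phi(t-t')^2P_\chi(t)P_\chi(t')\,dt\,dt'$, possibly with an extra outer weight, and each $P_\chi$ is the $a_n$-expansion displayed before the lemma. Expanding the product gives four kinds of term. Two are conjugate-frequency terms, with characters $\chi(n)\overline{\chi(m)}$ and oscillation $n^{-it}m^{it'}$, together with their complex conjugates. Two are same-sign terms, with $\chi(n)\chi(m)$ and $n^{-it}m^{-it'}$, and their conjugates. We split the conjugate-frequency terms into the diagonal $n=m$, which is kept for the main term, and the off-diagonal $n\ne m$. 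For each term we evaluate the character sum over all $q-1$ characters and then subtract the principal character $\chi_0$.

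\textbf{Conjugate-frequency off-diagonal.} Over the full group, orthogonality gives $(q-1)\mathbf1_{n\equiv m(q)}$. Since $2\le n,m<X<q-1$ by Lemma~\ref{lem:height-margins}, this forces $n=m$, so the full-family off-diagonal vanishes identically. What remains is minus the $\chi_0$ contribution, namely a single-character bilinear form $\sum_{n\ne m}a_na_m(\cdot)$ against the kernel.

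To bound it we write $\Phi(t-t')^2$ through its Fourier representation; it is a compactly supported convolution in frequency. The $t$-integration then produces denominators $(\log n-\log m)$, or equivalently a frequency-localized pairing. We apply the bilinear weighted Hilbert inequality with $\lambda_n=\log n$ and $\delta_n\asymp1/n$, which gives $\sum_n|a_n|^2/\delta_n\ll\sum_{n<X}\Lambda(n)^2\ll XL$ by Lemma~\ref{lem:prime-sums}. The remaining kernel mass is $\int\Phi^2=2\pi bL$, and together these give $O(XL^2)$.

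\textbf{Same-sign terms.} Over the full group the character sum is $(q-1)\mathbf1_{nm\equiv1(q)}$. The $t$-integral of $(nm)^{-it}$ over a unit or finite window equals a difference of two unimodular numbers divided by $i\log(nm)$. This is bounded by $2/\log4$, since $nm\ge4$. The pairs with $nm\equiv1$ are exactly $m=\iota(n)$ with $n\in S$, and the involution estimate gives $\sum_{n\in S}|a_na_{\iota(n)}|\le\sum|a_n|^2\ll L^2$. Multiplying by $q$ from orthogonality and by the kernel mass $2\pi bL$ gives $O(qL^3)$. The subtracted $\chi_0$ term is a single-character same-sign form. We bound it crudely by $(\sum a_n)^2$ times the kernel mass, giving $O(LX)$, or we reuse the Hilbert bound, giving $O(L^2X)$. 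Either way the total is $O(qL^3+XL^2)$.

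\textbf{Main obstacle.} The delicate step is the $\chi_0$ conjugate-frequency off-diagonal. One must check that the kernel in the actual trace genuinely has the Hilbert form $1/(\log n-\log m)$ up to a bounded smooth factor, and is not merely an oscillatory integral. The plan is to integrate $t$ first over $I$ and keep $t-t'$ as a convolution variable. If the endpoints of $I$ produce the $1/(\log n-\log m)$ structure only after subtracting a smooth part, that smooth part would be bounded trivially by $(\sum_{n<X}a_n)^2\ll X$ times the kernel mass, which is still within $O(XL^2)$.
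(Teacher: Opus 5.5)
Your proposal follows the paper's proof essentially step for step. Full-family orthogonality removes the conjugate-frequency off-diagonal, and the deleted $\chi_0$ term is bounded by the weighted Hilbert inequality with $\delta_n\gg n^{-1}$ and kernel mass $\int\Phi^2=2\pi bL$. The same-sign terms are handled by $nm\equiv1\pmod q$, the involution on $S$, the bound $2/\log4$, and the crude $O(LX)$ principal correction.

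The step you flag as the obstacle is settled by the plan in your last paragraph, which is what the paper does: fixing $u=t-t'$ and integrating over $I\cap(I-u)=[a(u),b(u)]$ gives exactly $e^{-iu\alpha}\bigl(e^{-ia(u)(\alpha-\beta)}-e^{-ib(u)(\alpha-\beta)}\bigr)/\bigl(i(\alpha-\beta)\bigr)$. So for each $u$ both endpoint terms are genuine Hilbert forms with unimodular twists, and no smooth remainder needs to be split off. By contrast, the Fourier-representation route you mention first would not give $1/(\alpha-\beta)$ directly.
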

\begin{proof}
Write $c_n=\Lambda(n)/\sqrt n$ and
\[
 \mathcal K_I(\alpha,\beta)=
 \iint_{I^2}\Phi(t-t')^2e^{-it\alpha+it'\beta}\,dt\,dt'.
\]
Changing variables $u=t-t'$ and integrating over $I\cap(I-u)$ gives,
when $\alpha\ne\beta$,
\begin{equation}\label{eq:kernel-difference}
 |\mathcal K_I(\alpha,\beta)|
 \le\frac2{|\alpha-\beta|}\int_{\mathbb R}|\Phi(u)|^2du
 \ll\frac L{|\alpha-\beta|},
\end{equation}
because Plancherel gives $\int|\Phi|^2=2\pi bL$.

For conjugate frequencies, the full family imposes $n=m$, so its
off-diagonal is zero.  It remains to bound the conjugate-frequency off-diagonal of the
deleted principal character.  For $u\in[-T,T]$, write
$J_u=I\cap(I-u)=[a(u),b(u)]$.  When $\alpha\ne\beta$,
\[
 \mathcal K_I(\alpha,\beta)
 =\int_{-T}^{T}\Phi(u)^2e^{-iu\alpha}
 \frac{e^{-ia(u)(\alpha-\beta)}-e^{-ib(u)(\alpha-\beta)}}
 {i(\alpha-\beta)}\,du.
\]
For each fixed $u$, each endpoint term is a generalized Hilbert form in the
frequencies $\alpha_n=\log n$ with coefficient sequences of moduli $|c_n|$.
The weighted Hilbert input above therefore gives
\[
 O_1\ll \left(\int_{\mathbb R}|\Phi(u)|^2\,du\right)
 \sum_{n<X}\frac{|c_n|^2}{\delta_n},\qquad
 \delta_n=\min_{m\ne n}|\log n-\log m|\gg n^{-1}.
\]
Since $\int_{\mathbb R}|\Phi(u)|^2\,du=2\pi bL$, Lemma~\ref{lem:prime-sums} yields
$O_1\ll L\sum_{n<X}\Lambda(n)^2\ll XL^2$.

For same frequencies, the full family imposes $nm\equiv1\pmod q$.
On the set $S$ above, inversion is an involution, so
\[
 \sum_{n\in S}|c_nc_{\iota(n)}|
 \le\frac12\sum_{n\in S}(|c_n|^2+|c_{\iota(n)}|^2)
 \le\sum_{n<X}|c_n|^2\ll L^2
\]
by Lemma~\ref{lem:prime-sums}.  The remaining translation integral has frequency $\log(nm)$ and has
modulus at most $2/\log(nm)$. This is at most $2/\log4$. Integration
in $u$ supplies $2\pi bL$, and character orthogonality then gives
$O(qL^3)$.
For the deleted principal character, the same-sign kernel is $O(L)$ because
$\log(nm)\ge\log4$, and Lemma~\ref{lem:prime-sums} gives
\[
 L\left(\sum_{n<X}\frac{\Lambda(n)}{\sqrt n}\right)^2\ll LX.
\]
This is smaller than the asserted $O(XL^2)$ correction and completes the proof.
\end{proof}

\begin{lemma}[Family $L^2$ density]\label{lem:F1}
If $A_q(t)^2=\sum_{\chi\in\F_q}|\nu_\chi(t)|^2$, then
\[
 A_q(t)\ll\sqrt q\left(\ell+\log^+\frac{|t|}{4T}+1\right).
\]
\end{lemma}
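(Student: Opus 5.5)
We split $\nu_\chi=\mu_\chi+P_\chi$ and use $A_q(t)\le\bigl(\sum_\chi|\mu_\chi(t)|^2\bigr)^{1/2}+\bigl(\sum_\chi|P_\chi(t)|^2\bigr)^{1/2}$ (Minkowski in $\ell^2(\F_q)$). For the archimedean part, Lemma~\ref{lem:gamma-density} bounds each $|\mu_\chi(t)|$ by $O(\ell+\log^+(|t|/4T)+1)$, uniformly in the parity. Summing the square over the $q-2$ characters and taking the root gives the claimed bound with the factor $\sqrt q$.

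For the prime part we expand
\[
 |P_\chi(t)|^2=\frac1{4\pi^2}\Bigl|\sum_{n<X}a_n\bigl(\chi(n)n^{-it}+\overline{\chi(n)}n^{it}\bigr)\Bigr|^2,
 \qquad a_n=\Lambda(n)n^{-1/2}.
\]
We then enlarge the sum from $\F_q$ to all characters modulo $q$; every term is nonnegative, so this only increases it. Orthogonality then applies, as recorded before Lemma~\ref{lem:prime-offdiag}, and we work pointwise in $t$ with no time integration.

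\emph{Conjugate-frequency terms.} These are the products $\chi(n)\overline{\chi(m)}$ and their conjugates. Their full-family sum is $(q-1)\mathbf 1_{n\equiv m\ (q)}$. Since $n,m<X<q-1$ by Lemma~\ref{lem:height-margins}, only $n=m$ survives, with total $2(q-1)\sum_{n<X}a_n^2$.

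\emph{Same-sign terms.} These are $\chi(n)\chi(m)$ and their conjugates. The full-family sum imposes $nm\equiv1\pmod q$. By the involution estimate on $S$, their contribution is at most $2(q-1)\sum_{n\in S}|a_na_{\iota(n)}|\le 2(q-1)\sum_{n<X}a_n^2$.

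By Lemma~\ref{lem:prime-sums}, $\sum_{n<X}a_n^2=\tfrac12L^2+O(L)\ll\ell^2$. Hence $\sum_{\chi}|P_\chi(t)|^2\ll q\ell^2$ uniformly in real $t$, and so $(\sum_\chi|P_\chi|^2)^{1/2}\ll\sqrt q\,\ell$.

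Adding the two contributions proves the lemma. The only point requiring care is the same-sign term. One has to check that the congruence $nm\equiv1$ does not create more than a diagonal-sized mass, and the involution bound on $S$ does exactly this. The bandwidth condition $X<q-1$ removes all other off-diagonal terms. Since the sum was enlarged rather than restricted, no principal-character correction is needed.
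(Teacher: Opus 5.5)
Your proof is correct and follows the paper's argument: Minkowski in $\ell^2(\F_q)$, the uniform gamma bound of Lemma~\ref{lem:gamma-density} for $\mu_\chi$, and positivity plus full-family orthogonality for the prime polynomial. The only difference is that the paper never meets the same-sign terms. Since $P_\chi(t)=-\pi^{-1}\Re S_\chi(t)$ with $S_\chi(t)=\sum_{n<X}\Lambda(n)\chi(n)n^{-1/2-it}$, one has $|P_\chi|\le\pi^{-1}|S_\chi|$, so only the diagonal mean value $\sum_{\chi\bmod q}|S_\chi(t)|^2=(q-1)\sum_{n<X}\Lambda(n)^2/n$ is needed, and your involution step on $S$, while valid, can be avoided.
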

\begin{proof}
For $S_\chi(t)=\sum_{n<X}\Lambda(n)\chi(n)n^{-1/2-it}$,
orthogonality gives
\[
 \sum_{\chi\bmod q}|S_\chi(t)|^2=(q-1)\sum_{n<X}\frac{\Lambda(n)^2}{n}\ll qL^2
\]
by Lemma~\ref{lem:prime-sums}.  Lemma~\ref{lem:gamma-density} supplies the uniform
gamma bound, and Minkowski's inequality completes the proof.
\end{proof}

\begin{lemma}[Family end effects]\label{lem:F2}
Let
\[
 M_\chi=\iint_{I^2}\Phi(t-t')^2\nu_\chi(t)\nu_\chi(t')\,dt\,dt'.
\]
Then
\[
 \sum_{\chi\in\F_q}|\tr\widetilde G_\chi^2-M_\chi|
 \ll qL\ell^2\log L\log(2T).
\]
\end{lemma}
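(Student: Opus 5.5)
Expand $\tr\widetilde G_\chi^2$ on the prime side of \eqref{eq:G-two-sides}. Writing $G_{\chi,kl}=\int\widehat\phi(t-\tau_k)\widehat\phi(t-\tau_l)\nu_\chi(t)\,dt$ gives
\[
 \tr G_\chi^2=\iint\nu_\chi(t)\nu_\chi(t')\,K_d(t,t')^2\,dt\,dt',\qquad
 K_d(t,t')=\sum_{0\le k<d}\widehat\phi(t-\tau_k)\widehat\phi(t'-\tau_k).
\]
Lemma~\ref{lem:Poisson} shows that the full lattice sum equals $L\Phi(t-t')$. Hence $K_d$ is this kernel minus the contributions of $k<0$ and $k\ge d$. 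Dividing by $L^2$, $\tr\widetilde G_\chi^2-M_\chi$ splits into two pieces. The first is the full-kernel integral $\iint\Phi(t-t')^2\nu\nu$ restricted to $(t,t')\notin I^2$. The second consists of the cross and square terms involving the truncated lattice tails $E(t,t')=L^{-1}\sum_{k\notin[0,d)}\widehat\phi(t-\tau_k)\widehat\phi(t'-\tau_k)$. Throughout, $\nu_\chi\nu_\chi$ will be controlled after summing over $\chi$ by Cauchy--Schwarz, $\sum_\chi|\nu_\chi(t)\nu_\chi(t')|\le A_q(t)A_q(t')$, together with Lemma~\ref{lem:F1}. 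This gives $A_q(t)A_q(t')\ll q\ell^2$ for $t,t'$ within $O(T)$ of $I$, with only logarithmic growth beyond. That growth is harmless against the decay \eqref{eq:poly-decay}.

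For the tail terms, I will bound $|\widehat\phi(t-\tau_k)|\le\psi(t-\tau_k)$. Summing over $k<0$ (spacing $h=2\pi/L$) gives
\[
 \sum_{k<0}\psi(t-\tau_k)\psi(t'-\tau_k)\ll L\int_{-\infty}^{T}\psi(t-x)\psi(t'-x)\,dx .
\]
Integrating in $t,t'$ produces a weight concentrated near the endpoint $T$: after substitution, the mass is governed by $\int_0^\infty r\psi(r)^2\,dr\ll\log L$ from \eqref{eq:psi-integrals}, together with $\int\psi\ll\log L$ for the one-variable spread. The cross term $\Phi\cdot E$ is handled the same way using $|\Phi|\ll\psi$-type decay. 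The symmetric treatment at $2T$ uses $d=\lfloor T/h\rfloor$, so $\tau_d\in[2T-h,2T]$ and the offset costs $O(1)$ sampling points. The region $(t,t')\notin I^2$ for the full kernel is controlled in the same manner. Since $\Phi(t-t')^2$ decays in $|t-t'|$, the part with one variable inside $I$ and one outside contributes $\int_0^\infty r\,\Phi(r)^2$-type endpoint mass near $T$ and $2T$, which is again $\ll L\log L$ after normalization.

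The claimed factor $\log(2T)$ enters when $t$ and $t'$ are both far outside $I$. There the logarithmic growth of $A_q$ in Lemma~\ref{lem:F1} integrates against the tails over dyadic ranges up to scale $T$, and in particular through the negative ordinates near $t\approx0$ for the lattice tail $k\ge d$ reflected. Collecting terms, each endpoint contributes $\ll q\ell^2\cdot L\log L$, times at most $\log(2T)$ from this dyadic range, giving the stated bound $qL\ell^2\log L\log(2T)$.

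The main obstacle I expect is the bookkeeping of the two-variable truncated lattice sum. One has to ensure that squaring $K_d$ does not lose a factor $L$: the kernel is $O(L^2)$ pointwise, and its square must be integrated against the endpoint region of width only $O(\log L/L)$ effectively. I would first try to bound $|K_d-L\Phi|$ pointwise by $L\int_{\text{tail}}\psi\psi$ and integrate using the third estimate of \eqref{eq:psi-integrals}. If that loses a power of $L$, I would instead use Cauchy--Schwarz in $k$, keeping one factor in $L^2$ via $\int\psi^2\ll L$.
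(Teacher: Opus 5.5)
There is a genuine gap in your decomposition. You write $K_d=L\Phi-LE$ on all of $\mathbb R^2$. You then split $\tr\widetilde G_\chi^2-M_\chi$ into $\iint_{\mathbb R^2\setminus I^2}\Phi(t-t')^2\nu_\chi(t)\nu_\chi(t')\,dt\,dt'$ plus the cross and square terms in $E$. Each of these pieces diverges.

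Since $\int_{\mathbb R}\Phi(u)^2\,du=2\pi bL>0$ and $\mu_\chi(t)\asymp\log|t|$ as $|t|\to\infty$, the near-diagonal part of $\mathbb R^2\setminus I^2$ far from $I$ already makes the first piece diverge. When $t$ and $t'$ both lie well beyond $2T$ (or well below $T$), the omitted lattice tail covers them. Hence $E(t,t')\approx\Phi(t-t')$ there, and the terms $-2\Phi E$ and $E^2$ diverge as well. Only the recombined square $(\Phi-E)^2=L^{-2}K_d^2$ is integrable in that region.

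Your paragraph on the region where both variables are far from $I$ cannot repair this. No dyadic bookkeeping of the logarithmic growth of $A_q$ makes a non-integrable piece finite. That paragraph also misplaces the source of $\log(2T)$, which has nothing to do with negative ordinates near $t\approx0$.

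The missing idea is to compare with the translation-invariant kernel only on $I^2$. On $\mathbb R^2\setminus I^2$ you should instead bound the finite lattice kernel $K_d$ directly, exploiting that $K_d$ is localized near $I$.

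On $I^2$ your endpoint treatment is essentially fine. There $|K_d^2-L^2\Phi^2|\le2aL^2\sum_{k\notin[0,d)}\psi(t-\tau_k)\psi(t'-\tau_k)$. With $A_q\ll\sqrt q\,\ell$ on $I$, this gives $qL^3\ell^2\log L$ before division by $L^2$.

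Off $I^2$, use $|K_d|\le aL^2$ to get $K_d^2\le aL^2\sum_{0\le k<d}\psi(t-\tau_k)\psi(t'-\tau_k)$. After summing over $\chi$ and using symmetry in $t,t'$, this is at most a constant times $L^2\sum_{0\le k<d}\bigl(\int_{\mathbb R\setminus I}\psi(t-\tau_k)A_q(t)\,dt\bigr)\bigl(\int_{\mathbb R}\psi(t'-\tau_k)A_q(t')\,dt'\bigr)$.

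\begin{itemize}
\item The second factor is $\ll\sqrt q\,\ell\log L$ uniformly in $k$.
\item The first factors sum to $\int_{\mathbb R\setminus I}A_q\sigma$, where $\sigma(t)=\sum_{0\le k<d}\psi(t-\tau_k)$.
\item Writing $\Delta=\dist(t,I)$, one has $\sigma(t)\ll\psi(\Delta)+L\min\{\log L,C/\Delta\}$ for $\Delta\le2T$, and $\sigma(t)\ll TL\Delta^{-2}$ beyond.
\end{itemize}

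The $L/\Delta$ decay of this one-variable marginal, integrated over $1\le\Delta\le2T$, is the true source of the factor $\log(2T)$. It gives $\int_{\mathbb R\setminus I}A_q\sigma\ll\sqrt q\,L\ell\log(2T)$. Multiplying by the second factor and by $L^2$, then dividing by $L^2$, yields the stated bound $qL\ell^2\log L\log(2T)$.
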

\begin{proof}
\mbox{}\par\smallskip\noindent\textit{Step 1: exact kernel identity.}\par
Put
\[
 K(t,t')=\sum_{0\le k<d}\widehat\phi(t-\tau_k)\widehat\phi(t'-\tau_k),
 \qquad K_\infty(t,t')=L\Phi(t-t'),
\]
and $K_{\rm out}=K_\infty-K$.  The majorant $\psi$ from
\eqref{eq:psi-integrals} satisfies $|\widehat\phi(r)|\le\psi(r)$ for real $r$.
By the sampling identity and absolute convergence,
\begin{equation}\label{eq:F2-kernel-trace}
 L^2\tr\widetilde G_\chi^{\,2}
 =\iint_{\mathbb R^2}K(t,t')^2\nu_\chi(t)\nu_\chi(t')\,dt\,dt'.
\end{equation}
Cauchy--Schwarz and Lemma~\ref{lem:Poisson} give
$|K|,|K_\infty|\le aL^2$, while
\begin{equation}\label{eq:F2-kernel-out}
 |K_{\rm out}(t,t')|
 \le\sum_{k\notin[0,d)}\psi(t-\tau_k)\psi(t'-\tau_k).
\end{equation}
\medskip\noindent\textit{Step 2: truncation inside the target interval.}\par
Subtract the $I^2$ integral with kernel $K_\infty^2$ from
\eqref{eq:F2-kernel-trace}.  Denote the contribution on $I^2$ by $E_{1,\chi}$
and the contribution on $\mathbb R^2\setminus I^2$ by $E_{2,\chi}$.  In both estimates we use
\[
 \sum_{\chi\in\F_q}|\nu_\chi(t)\nu_\chi(t')|\le A_q(t)A_q(t').
\]

On $I$, Lemma~\ref{lem:F1} gives $A_q(t)\ll\sqrt q\,\ell$.  From
$|K^2-K_\infty^2|\le(|K|+|K_\infty|)|K_{\rm out}|$ and
\eqref{eq:F2-kernel-out},
\[
 \sum_{\chi\in\F_q}|E_{1,\chi}|
 \ll L^2\sum_{k\notin[0,d)}
 \left(\int_I\psi(t-\tau_k)A_q(t)\,dt\right)^2.
\]
There are only $O(1)$ omitted lattice points within distance $2h$ of the two endpoints.
This includes $k=d$, which can lie just inside $I$ because $d=\lfloor T/h\rfloor$.
Every other omitted point can be indexed by $j\ge1$ and satisfies
\[
 \int_I\psi(t-\tau_k)\,dt
 \ll\min\{\log L,(jh)^{-1}\}.
\]
Splitting the square sum where $(jh)^{-1}$ is comparable with $\log L$ yields
\[
 \sum_{k\notin[0,d)}
 \left(\int_I\psi(t-\tau_k)\,dt\right)^2
 \ll (\log L)^2+h^{-1}\log L\ll L\log L.
\]
Consequently
\begin{equation}\label{eq:F2-E1}
 \sum_{\chi\in\F_q}|E_{1,\chi}|\ll qL^3\ell^2\log L.
\end{equation}

\medskip\noindent\textit{Step 3: kernel mass outside the target interval.}\par
For the complementary contribution, set
$\sigma(t)=\sum_{0\le k<d}\psi(t-\tau_k)$.  Since
$|K|\le aL^2$ and $|K|\le\sum_{0\le k<d}\psi(t-\tau_k)\psi(t'-\tau_k)$,
\[
 |K(t,t')|^2\le aL^2\sum_{0\le k<d}
 \psi(t-\tau_k)\psi(t'-\tau_k).
\]
By symmetry,
\begin{align*}
 \sum_{\chi\in\F_q}|E_{2,\chi}|
 &\ll L^2\sum_{0\le k<d}
 \left(\int_{\mathbb R\setminus I}\psi(t-\tau_k)A_q(t)\,dt\right)
 \left(\int_{\mathbb R}\psi(t'-\tau_k)A_q(t')\,dt'\right).
\end{align*}
Uniformly in $k$, Lemma~\ref{lem:F1} and \eqref{eq:psi-integrals} give
\begin{equation}\label{eq:F2-full-factor}
 \int_{\mathbb R}\psi(t-\tau_k)A_q(t)\,dt\ll\sqrt q\,\ell\log L.
\end{equation}
Indeed, use $A_q\ll\sqrt q\,\ell$ when $|t-\tau_k|\le2T$, and use
$\psi(r)\ll r^{-2}$ together with the logarithmic growth in Lemma~\ref{lem:F1}
outside that range.

Let $\Delta=\dist(t,I)$.  Lattice comparison gives, for $\Delta\le2T$,
\[
 \sigma(t)\ll\psi(\Delta)+h^{-1}\int_\Delta^\infty\psi(r)\,dr
 \ll\psi(\Delta)+L\min\{\log L,C/\Delta\},
\]
with the last term interpreted as $L\log L$ at $\Delta=0$.  If $\Delta>2T$,
then every sample has distance comparable with $\Delta$, and $d\ll TL$ gives
$\sigma(t)\ll TL\Delta^{-2}$.  Lemma~\ref{lem:F1} and the two sides of $I$
therefore give
\begin{align*}
 \int_{\mathbb R\setminus I}A_q(t)\sigma(t)\,dt
 &\ll \sqrt q\,\ell\int_0^{2T}
 \left\{\psi(\Delta)+L\min(\log L,C/\Delta)\right\}\,d\Delta\\
 &\quad+\sqrt q\,TL\int_{2T}^{\infty}
 \frac{\ell+\log(\Delta/T)+1}{\Delta^2}\,d\Delta.
\end{align*}
Here
\[
 \int_0^{2T}\psi(\Delta)\,d\Delta\ll\log L,
\]
and, on splitting at $\Delta\asymp1/\log L$,
\[
 \int_0^{2T}L\min(\log L,C/\Delta)\,d\Delta
 \ll L\{1+\log(2T\log L)\}
 \ll L\log(2T).
\]
Indeed,
\[
 TL\int_{2T}^{\infty}
 \frac{\ell+\log(\Delta/T)+1}{\Delta^2}\,d\Delta
 \ll L\ell.
\]
Consequently
\begin{equation}\label{eq:F2-outside-factor}
 \int_{\mathbb R\setminus I}A_q(t)\sigma(t)\,dt
 \ll\sqrt q\,L\ell\log(2T).
\end{equation}
Equations \eqref{eq:F2-full-factor} and \eqref{eq:F2-outside-factor} imply
\begin{equation}\label{eq:F2-E2}
 \sum_{\chi\in\F_q}|E_{2,\chi}|
 \ll qL^3\ell^2\log L\log(2T).
\end{equation}
\medskip\noindent\textit{Step 4: return to the matrix normalization.}\par
Dividing \eqref{eq:F2-E1} and \eqref{eq:F2-E2} by $L^2$ in
\eqref{eq:F2-kernel-trace} proves the lemma.
On the second-trace scale $qTL\ell^2$, the resulting family error has
relative size
\[
 O\!\left(\frac{\log L\log(2T)}{T}\right)=o(1).
\]

\end{proof}

\Needspace{0.24\textheight}
\section{The two family traces}\label{sec:traces}
\begin{proposition}[First trace]\label{prop:T1}
\[
 \sum_{\chi\in\F_q}\tr\widehat G_\chi=(1+o(1))\Nfam(q,T).
\]
\end{proposition}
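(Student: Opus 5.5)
The plan is to compute $\tr G_\chi$ from the zero-side representation in \eqref{eq:G-two-sides}, localize it with Proposition~\ref{prop:G3}, and then count the surviving zeros using the sampling identity. The trace is
\[
 \tr G_\chi=\sum_{\rho}m_\rho\sum_{0\le k<d}\widehat\phi(\gamma_\rho-\tau_k)^2 ,
\]
which is an exact identity.

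Split the zeros according to whether the real ordinate lies in $I'$ or outside it. The outside part is $\tr\widehat E_\chi$. By \eqref{eq:abstract-trace-perturbation} and Proposition~\ref{prop:G3}, it is at most $(qT)^{-B}$ for each $\chi$. Summed over $\mathcal F_q$ this gives $O(q(qT)^{-B})=o(\mathcal N)$.

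For zeros with $\Re\gamma_\rho\in I'$, define the weight $w(z)=(aL^2)^{-1}\sum_{0\le k<d}\widehat\phi(z-\tau_k)^2$ and compare it with $1$. The comparison uses the second identity of Lemma~\ref{lem:Poisson}. That identity is stated for real $t$. I will extend it to $z=t-iy$ with $|y|<1/2$ by analytic continuation: both $\sum_{k\in\mathbb Z}\widehat\phi(z-\tau_k)^2$ and the constant $aL^2$ are entire in $z$. The series converges locally uniformly by Lemma~\ref{lem:G2}, and the Poisson argument only uses the support of $\phi*\phi$, which is $|\xi|\le L$. Hence the full-lattice sum equals $aL^2$ identically, including at complex $\gamma_\rho$.

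It follows that $1-w(z)=(aL^2)^{-1}\sum_{k\notin[0,d)}\widehat\phi(z-\tau_k)^2$. For real $t$ this deficit is nonnegative and at most $1$. For complex points the strip bound alone gives only a crude $X^{1/2}$-sized bound. To avoid relying on that, I will bound the deficit by distance to the endpoints.

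\textbf{Zeros on the critical line in $I$.} Here
\[
 1-w(t)\ll L^{-2}\sum_{j\ge1}\psi(\dist(t,\{T,2T\})+jh)^2,
\]
which is $\ll\min\{1,(L\Delta)^{-1}\}$ up to logarithms, where $\Delta$ is the distance of $t$ to $\partial I$. Integrating this against the family zero density $O(q\ell)$ per unit length (from \eqref{eq:local-zero-count} summed over $\chi$) gives $O(q\ell\log L\log T/L)$. This is $o(qT\ell)$ by Lemma~\ref{lem:height-margins}.

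\textbf{Zeros in $I'\setminus I$.} These have $0\le w\le1$ on the real line. Their total count is $O(qD_0\ell)=o(\mathcal N)$, so they contribute negligibly.

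\textbf{Off-line zeros.} This is the main obstacle. The point $\gamma_\rho$ has imaginary part $-(\beta-1/2)$, and $\widehat\phi(\gamma_\rho-\tau_k)^2$ is complex and can carry the factor $X^{1/2}$ coming from $e^{L|y|}$. The zero-symmetry \eqref{eq:zero-symmetry} pairs $\gamma_\rho$ with $\overline{\gamma_\rho}$, so the contributions are real in pairs. The real issue is a bound uniform in $\beta$.

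For $\Re\gamma_\rho$ in the bulk of $I$, the identity $w(z)=1-(\text{tail})$ holds exactly. I will bound the tail $\sum_{k\notin[0,d)}\widehat\phi(z-\tau_k)^2$ with Lemma~\ref{lem:G2}'s Gevrey decay. For $\dist(\Re z,\partial I)\ge D_0$ the tail is $\ll X^{1/2}L\,e^{-c D_0^{1/s}}$, which is negligible once $K$ is chosen as in Proposition~\ref{prop:G3}.

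The off-line zeros within distance $D_0$ of $\partial I$ each contribute at most $O(X^{1/2}L^2)$. This is not absorbed by the count $qD_0\ell$. If the direct bound fails, I plan to fall back on the following. Since $G_\chi$ is positive semidefinite on the prime side? No — $G_\chi$ is not positive semidefinite in general.

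Instead I would evaluate the first trace from the prime side, where everything is real. Using $\sum_{k<d}\widehat\phi(t-\tau_k)^2$ integrated against $\nu_\chi$, the $\mu_\chi$ part gives $aL^2\int_I\mu_\chi+O(L\log L\cdot\ell)$ by the same endpoint analysis, which equals $aL^2T\ell/2\pi\,(1+o(1))$. The $P_\chi$ part vanishes upon summing over $\mathcal F_q$, because $\sum_{\chi}\chi(n)=-1$ for $2\le n<X<q-1$. The remaining correction is $O(\int|P_{\chi_0}|\,\sigma)\ll L\sqrt X\cdot T$, which is $o(qTL^2\ell)$ by the last entry of \eqref{eq:error-ledger}.

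Comparing the result with \eqref{eq:Nfam} then yields $\sum_\chi\tr\widehat G_\chi=(1+o(1))\mathcal N(q,T)$. I therefore expect the final proof to take this prime-side route, with the endpoint sampling deficit controlled via \eqref{eq:psi-integrals} as the only delicate estimate.
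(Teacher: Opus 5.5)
Your final prime-side argument is essentially the paper's proof: the Poisson identity plus endpoint-tail control for the $\mu_\chi$ part (including the mass of $S_L$ outside $I$ against the logarithmic growth of $\mu_\chi$), and orthogonality reducing the $P_\chi$ part to the deleted principal character; you are also right to abandon the zero-side route, since off-line zeros within $D_0$ of $\partial I$ can carry an $X^{1/2}$-type factor that no available count absorbs. The one slip is in the principal-character term: the trivial bound is $\|P_{\chi_0}\|_\infty\int_{\mathbb R}S_L\ll\sqrt X\,TL^2$ (not $L\sqrt X\,T$), whose relative size is $\sqrt X/(q\ell)$ rather than the ledger entry. This still tends to zero because $X<q$, whereas the paper exploits the oscillation of $\sum_{k<d}e^{-ikh\log n}$ against $A_\phi(\log n)$ to get $O(L^2\sqrt X)$ with no factor $T$.
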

\begin{proof}
\mbox{}\par\smallskip\noindent\textit{Step 1: archimedean contribution.}\par

Write
\[
 S_L(t)=\sum_{0\le k<d}\widehat\phi(t-\tau_k)^2.
\]
Lemma~\ref{lem:Poisson} gives
$\sum_{k\in\mathbb Z}\widehat\phi(t-\tau_k)^2=aL^2$.  If
$F(x)=\int_x^\infty\psi(r)^2\,dr$, lattice comparison and
\eqref{eq:psi-integrals} give
\[
 \sum_{j\ge0}F(jh)
 \ll F(0)+h^{-1}\int_0^\infty F(x)\,dx
 =F(0)+h^{-1}\int_0^\infty r\psi(r)^2\,dr
 \ll L\log L.
\]
For $t\in I$,
$aL^2-S_L(t)=\sum_{k\notin[0,d)}\widehat\phi(t-\tau_k)^2$.
If $r_k=\dist(\tau_k,I)$, then
$\int_I\psi(t-\tau_k)^2\,dt\le F(r_k)$; the omitted points form two lattice tails
(the first upper-tail point $k=d$ may lie just inside $I$),
so the preceding square-tail estimate gives a total $O(L\log L)$.
For the outside integral with $\dist(t,I)\le2T$, an included sample at
lattice distance $jh+O(h)$ from the nearer endpoint contributes at most $F(jh)$.
Summing over the two endpoint tails therefore gives $O(L\log L)$ again.  In this
range Lemma~\ref{lem:gamma-density} gives $|\mu_\chi(t)|\ll\ell$.
For $\Delta=\dist(t,I)>2T$, all $d\ll TL$ samples are at distance comparable with
$\Delta$, whence $S_L(t)\ll TL\Delta^{-4}$; more explicitly,
\[
 \int_{\Delta>2T}S_L(t)|\mu_\chi(t)|\,dt
 \ll TL\int_{2T}^{\infty}
 \frac{\ell+\log(\Delta/T)+1}{\Delta^4}\,d\Delta
 \ll \frac{L\ell}{T^2}.
\]
Consequently
\begin{equation}\label{eq:first-end-effect}
 \int_I(aL^2-S_L(t))|\mu_\chi(t)|\,dt
 +\int_{\mathbb R\setminus I}S_L(t)|\mu_\chi(t)|\,dt
 \ll\ell L\log L
\end{equation}
uniformly in $\chi$.
Consequently the gamma contribution to $\tr G_\chi$ is
\[
 aL^2\int_T^{2T}\mu_\chi(t)\,dt+O(\ell L\log L).
\]
We now sum this identity over the family before comparing it with the zero
count.  Lemma~\ref{lem:gamma-density} gives
\[
 \sum_{\chi\in\F_q}\int_T^{2T}\mu_\chi(t)\,dt
 =\frac{(q-2)T\ell}{2\pi}+O(qT).
\]
The summed endpoint error from \eqref{eq:first-end-effect} is
$O(q\ell L\log L)$, which is absorbed by $O(qL^2T)$ because
$\log L=o(T)$ by Lemma~\ref{lem:height-margins}.  Together with
\eqref{eq:Nfam} and $L\asymp\ell$, this yields
\begin{equation}\label{eq:first-gamma}
 \sum_{\chi\in\F_q}\{\text{gamma contribution to }\tr G_\chi\}
 =aL^2\Nfam(q,T)+O(qL^2T).
\end{equation}

\medskip\noindent\textit{Step 2: the linear prime contribution.}\par
For the prime contribution, put
$A_\phi(y)=\int_{\mathbb R}\phi(u)\phi(u+y)\,du$.  Fourier inversion gives
\[
 \int_{\mathbb R}\widehat\phi(t-\tau_k)^2e^{-ity}\,dt
 =2\pi A_\phi(y)e^{-i\tau_ky}.
\]
Thus the prime part of $\tr G_\chi$ equals
\[
 -2\Re\sum_{n<X}\frac{\Lambda(n)\chi(n)}{\sqrt n}
 A_\phi(\log n)e^{-iT\log n}
 \sum_{0\le k<d}e^{-ikh\log n}.
\]
For $0<y<L$,
\[
 \left|\sum_{0\le k<d}e^{-ikhy}\right|
 \ll\frac{L}{\min\{y,L-y\}},\qquad |A_\phi(y)|\le L-y,
\]
so their product is $O(L^2)$ for $y=\log n\ge\log2$.  When this expression is summed
over all characters modulo $q$, character orthogonality makes it zero because
$2\le n<X<q$.  Removing the principal character and applying
Lemma~\ref{lem:prime-sums} therefore leaves a total family contribution
$O(L^2\sqrt X)$.

\medskip\noindent\textit{Step 3: normalize and compare errors with the zero count.}\par
Combining \eqref{eq:first-gamma} with the family prime estimate and dividing
by $aL^2$ gives
\[
 \sum_{\chi\in\F_q}\tr\widehat G_\chi
 =\Nfam(q,T)+O(qT+\sqrt X).
\]
Relative to \eqref{eq:Nfam}, these errors are $O(\ell^{-1})$ and
$O(\sqrt X/(qT\ell))$, both $o(1)$.  This proves the proposition.
\end{proof}

\begin{proposition}[Second trace]\label{prop:T2}
\[
 \sum_{\chi\in\F_q}\|\widehat G_\chi\|_F^2
 =\left(\frac1{c_\lambda^*}+o(1)\right)\Nfam(q,T).
\]
\end{proposition}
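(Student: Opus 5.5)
Since $G_\chi$ is real symmetric, $\|\widehat G_\chi\|_F^2=\tr\widehat G_\chi^2=\tr\widetilde G_\chi^2/(aL)^2$. The plan is to start from Lemma~\ref{lem:F2}, which replaces $\sum_\chi\tr\widetilde G_\chi^2$ by $\sum_\chi M_\chi$ with a family error $O(qL\ell^2\log L\log(2T))$. After division by $a^2L^2$, this error has relative size $O(\log L\log(2T)/T)=o(1)$ against $\Nfam\asymp qT\ell$ (by \eqref{eq:Nfam} and Lemma~\ref{lem:height-margins}). It then remains to evaluate
\[
 M_\chi=\iint_{I^2}\Phi(t-t')^2\nu_\chi(t)\nu_\chi(t')\,dt\,dt',
\]
expanding $\nu_\chi=\mu_\chi+P_\chi$ into gamma--gamma, gamma--prime and prime--prime pieces.

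\emph{Gamma--gamma term.} On $I$, Lemma~\ref{lem:gamma-density} gives $\mu_\chi=\ell/(2\pi)+O(1)$ with $\mu_\chi'\ll T^{-1}$. Since $\int_{\mathbb R}\Phi^2=2\pi bL$ and $\Phi$ decays like $\psi$, this yields $(\ell/2\pi)^2\cdot2\pi bL\cdot T(1+O(\ell^{-1}))$, the $I$-endpoint loss being $O(\ell^2\int|u|\Phi(u)^2)$, which is lower order. Each character contributes $bLT\ell^2/(2\pi)$.

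\emph{Cross terms.} Summing over $\F_q$ kills the linear prime pieces by orthogonality, since $n<X<q$, up to the principal-character correction. As in Step~2 of Proposition~\ref{prop:T1}, that correction is $O(\ell L^2\sqrt X)$, which is negligible by \eqref{eq:error-ledger}.

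\emph{Prime--prime term.} Expand $P_\chi(t)P_\chi(t')$ into conjugate-frequency terms $\chi(n)\overline{\chi(m)}$ and same-sign terms $\chi(n)\chi(m)$. Lemma~\ref{lem:prime-offdiag} disposes of the off-diagonal conjugate part ($O(XL^2)$) and the entire same-sign part ($O(qL^3+XL^2)$); both are $o(qTL\ell^2)$. The diagonal $n=m$ carries $(q-2)(2\pi)^{-2}\cdot2\sum_{n<X}\Lambda(n)^2n^{-1}\,\mathcal K_I(\log n,\log n)$. Changing variables gives
\[
 \mathcal K_I(y,y)=\int\Phi(u)^2e^{-iuy}|I\cap(I-u)|\,du=2\pi T g(y)+O\!\Big(\int|u|\Phi(u)^2\,du\Big),
\]
and the error term is $O(\log L)$ by \eqref{eq:psi-integrals}. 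Lemma~\ref{lem:prime-sums} with partial summation then gives
\[
 \sum_{n<X}\frac{\Lambda(n)^2}{n}g(\log n)=\int_0^L y\,g(y)\,dy+O\Big(L\max g\Big)=\tfrac12 JL^3+O(L^2).
\]
So the diagonal contributes $(q-2)\frac{T}{2\pi}JL^3(1+o(1))$.

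Adding the pieces, the family total is $\frac{(q-2)T}{2\pi}(b\ell^2L+JL^3)(1+o(1))$. Dividing by $a^2L^2$ and using $L=\lambda\ell$ gives
\[
 \frac{(q-2)T\ell}{2\pi}\cdot\frac{b+\lambda^2J}{\lambda a^2}=\frac{\Nfam}{c_\lambda(\phi)}(1+o(1)).
\]
Lemma~\ref{lem:G4} replaces $c_\lambda(\phi)$ by $c_\lambda^*+O(L^{-1})$, which proves the proposition.

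The main obstacle is the prime diagonal: the normalizations ($2\pi$ factors, the factor $2$ from both conjugate orderings, and $\int\phi^2\phi^2(\cdot+y)$ versus $A_\phi$) must be tracked carefully so that exactly $J$ emerges. The finite-$I$ truncation error $\sum_n\Lambda(n)^2n^{-1}\int|u|\Phi^2\ll L^2\log L$ per character also has to be checked against $TL\ell^2$; this uses $\log L=o(T)$.
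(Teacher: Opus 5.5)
Your proposal is correct and follows the paper's proof essentially step for step: the reduction via Lemma~\ref{lem:F2}, the split of $M_\chi$ into gamma--gamma, mixed and prime--prime parts, Lemma~\ref{lem:prime-offdiag} for the off-diagonal and same-sign terms, the diagonal via $\mathcal K_I(\alpha,\alpha)=2\pi Tg(\alpha)+O(\log L)$ with partial summation giving $\tfrac12L^3J+O(L^2)$, and finally Lemma~\ref{lem:G4}. Two small points to tighten are that the mixed-term cancellation must be done separately within the even and odd classes, because $\mu_\chi$ depends on $\chi(-1)$, and that the paper bounds the principal-character correction by integrating by parts ($O(\ell L\sqrt X)$) rather than ``as in Step~2 of Proposition~\ref{prop:T1}''; any of these bounds, even the trivial $O(\ell LT\sqrt X)$, is negligible since $X<q$.
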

\begin{proof}
Put $H(u)=\Phi(u)^2$ and, for functions $U,V$ on $I$, write
\[
 M[U,V]=\iint_{I^2}H(t-t')U(t)V(t')\,dt\,dt'.
\]
Then
\begin{equation}\label{eq:M-decomposition}
 M_\chi=M[\mu_\chi,\mu_\chi]+2M[\mu_\chi,P_\chi]+M[P_\chi,P_\chi].
\end{equation}

\emph{The gamma--gamma term.}
Plancherel and the definition of $b$ give
\[
 \int_{\mathbb R}H(u)\,du=2\pi bL.
\]
The same two integrations by parts apply uniformly to $\phi^2$.
Indeed Lemma~\ref{lem:G1}, $\|\phi\|_\infty\le1$, and
$\|\phi'\|_\infty\le\|\phi''\|_1$ give
\[
 \|(\phi^2)'\|_1\ll1,\qquad
 \|(\phi^2)''\|_1
 \le2\|\phi'\|_\infty\|\phi'\|_1+2\|\phi''\|_1\ll1.
\]
Consequently $|\Phi(u)|\ll\min\{L,|u|^{-1},|u|^{-2}\}$ and
\[
 \int_{\mathbb R}|u|H(u)\,du\ll\log L.
\]
Hence
\[
 \iint_{I^2}H(t-t')\,dt\,dt'
 =\int_{-T}^{T}(T-|u|)H(u)\,du=2\pi TbL+O(\log L).
\]
Using $\mu_\chi(t)=\ell/(2\pi)+O(1)$ from Lemma~\ref{lem:gamma-density}, uniformly on $I$
and in parity, we obtain
\begin{equation}\label{eq:gamma-gamma}
 \sum_{\chi\in\F_q}M[\mu_\chi,\mu_\chi]
 =\frac{q-2}{2\pi}TbL\ell^2+O(qTL\ell+q\ell^2\log L).
\end{equation}
Relative to the second-trace scale $qTL\ell^2$, the two displayed errors are
\[
 O(\ell^{-1})\qquad\text{and}\qquad
 O\!\left(\frac{\log L}{TL}\right),
\]
respectively.

\emph{The mixed term.}
Within either full parity family, $\mu_\chi$ is fixed and
$\sum_\chi P_\chi=0$ because $2\le n<X<q-1$.  Removing the even principal character gives
\begin{equation}\label{eq:mixed-principal}
 \sum_{\chi\in\F_q}M[\mu_\chi,P_\chi]
 =-M[\mu_{\chi_0},P_{\chi_0}].
\end{equation}
Here $\mu_{\chi_0}$ denotes the formal even-parity archimedean density
obtained by setting $\kappa=0$ in the definition of $\mu_\chi$, and
$P_{\chi_0}$ is the corresponding principal-character prime polynomial.
No explicit formula for the principal $L$-function, and hence no assertion
about its pole term, is being used.
Let $F(t)=\mathbf1_I(t)\mu_{\chi_0}(t)$ and
$m(t')=\int_IH(t-t')\mu_{\chi_0}(t)\,dt$.  Lemma~\ref{lem:gamma-density} gives
$\|F\|_\infty\ll\ell$ and $\operatorname{Var}(F)\ll\ell$: the endpoint jumps are
$O(\ell)$ and $\int_I|\mu_{\chi_0}'(t)|\,dt\ll1$.  Since $\|H\|_1=2\pi bL$,
convolution with the signed measure $dF$ yields
\[
 \|m\|_\infty\ll\ell L,\qquad \|m'\|_1\ll\ell L.
\]
Integration by parts on $I$ therefore gives, for $n\ge2$,
\[
 \left|\int_I m(t')e^{\pm it'\log n}\,dt'\right|
 \ll\frac{\ell L}{\log n}.
\]
Because
\[
 \sum_{n<X}\frac{\Lambda(n)}{\sqrt n\log n}
 =\sum_{p^k<X}\frac1{k p^{k/2}}\ll\sqrt X,
\]
we obtain
\begin{equation}\label{eq:mixed-bound}
 \sum_{\chi\in\F_q}M[\mu_\chi,P_\chi]\ll\ell L\sqrt X.
\end{equation}
After division by $qTL\ell^2$, this is
$O(\sqrt X/(qT\ell))$.  The factor $L$ is present in both the numerator and
the reference scale; it must not be deleted from the unnormalized bound.

Notice that the displayed bounds for $m$ retain a factor $L$; this factor remains negligible on the
second-trace scale.

\emph{The prime--prime term.}
Set $c_n=\Lambda(n)/\sqrt n$, $\alpha_n=\log n$, and
\[
 \mathcal K_I(\alpha,\beta)=
 \iint_{I^2}H(t-t')e^{-it\alpha+it'\beta}\,dt\,dt'.
\]
Expansion of $P_\chi(t)P_\chi(t')$ gives the four frequency classes
\[
 \begin{split}
 &\chi(n)\overline{\chi(m)}\mathcal K_I(\alpha_n,\alpha_m),\qquad
 \overline{\chi(n)}\chi(m)\overline{\mathcal K_I(\alpha_n,\alpha_m)},\\
 &\chi(n)\chi(m)\mathcal K_I(\alpha_n,-\alpha_m),\qquad
 \overline{\chi(n)\chi(m)\mathcal K_I(\alpha_n,-\alpha_m)}.
 \end{split}
\]
For the conjugate-frequency classes, full-family orthogonality imposes $n=m$ because
$n,m<X<q$.  On the diagonal,
\[
 \mathcal K_I(\alpha,\alpha)
 =\int_{-T}^{T}(T-|u|)H(u)e^{-iu\alpha}\,du
 =2\pi Tg(\alpha)+O(\log L),
\]
where Fourier inversion gives
$\int_{\mathbb R}H(u)e^{-iu\alpha}\,du=2\pi g(\alpha)$.
The two diagonal classes therefore contribute
\begin{equation}\label{eq:prime-diagonal}
 \sum_{\chi\in\F_q}D_\chi
 =\frac{q-2}{\pi}T\sum_{n<X}\frac{\Lambda(n)^2}{n}g(\log n)
 +O(qL^2\log L).
\end{equation}
There is no principal-character correction here because $|\chi(n)|=1$ for every
$\chi\in\F_q$ and $n<X<q$.

The conjugate-frequency off-diagonal is $O(XL^2)$ after deleting the principal character.
For the same-sign classes, full-family orthogonality imposes $nm\equiv1\pmod q$; inversion
is an involution on the relevant set and the remaining translation frequency is
$\log(nm)\ge\log4$.  Lemma~\ref{lem:prime-offdiag} therefore bounds all these terms by
\begin{equation}\label{eq:prime-offdiagonal-total}
 O(qL^3+XL^2).
\end{equation}
Finally, write
\[
 B(y)=\sum_{\log n\le y}\frac{\Lambda(n)^2}{n}
 =\frac12y^2+E(y),\qquad E(y)=O(y),
\]
by Lemma~\ref{lem:prime-sums}.  Stieltjes integration gives
\begin{align*}
 \sum_{n<X}\frac{\Lambda(n)^2}{n}g(\log n)
 &=\int_{0^-}^{L}g(y)\,dB(y)\\
 &=\int_0^L yg(y)\,dy+[g(y)E(y)]_{0^-}^{L}
   -\int_0^L E(y)g'(y)\,dy.
\end{align*}
Here $g(L)=0$, $E(0)=0$, and
$\|g'\|_\infty\le\|(\phi^2)'\|_1\|\phi^2\|_\infty=O(1)$.
The final two terms are therefore $O(L^2)$, while the definition of $J$
gives $\int_0^L yg(y)\,dy=L^3J/2$.  Hence
\[
 \sum_{n<X}\frac{\Lambda(n)^2}{n}g(\log n)
 =\frac{L^3J}{2}+O(L^2).
\]
Combining this with \eqref{eq:prime-diagonal} and
\eqref{eq:prime-offdiagonal-total},
\begin{equation}\label{eq:prime-prime-total}
 \sum_{\chi\in\F_q}M[P_\chi,P_\chi]
 =\frac{q-2}{2\pi}TL^3J+O(qTL^2+qL^3+XL^2).
\end{equation}
The three errors here have relative sizes
$O(\ell^{-1})$, $O(T^{-1})$, and $O(X/(qT\ell))$.  The first is the
weighted-prime-sum remainder, the second comes from same-sign frequencies,
and the third is the deleted-principal correction.

Equations \eqref{eq:M-decomposition}, \eqref{eq:gamma-gamma},
\eqref{eq:mixed-bound}, \eqref{eq:prime-prime-total}, and
Lemma~\ref{lem:F2} give
\begin{equation}\label{eq:second-trace-main}
 \sum_{\chi\in\F_q}\tr\widetilde G_\chi^{\,2}
 =\frac{q-2}{2\pi}TL(b\ell^2+L^2J)+R_2,
\end{equation}
where
\begin{equation}\label{eq:R2}
 R_2\ll qTL^2+q\ell^2\log L+qL\ell^2\log L\log(2T)
 +qL^3+L^2X+\ell L\sqrt X.
\end{equation}
Relative to $qTL\ell^2$, the six terms are
\[
 O(\ell^{-1}),\quad O\!\left(\frac{\log L}{TL}\right),\quad
 O\!\left(\frac{\log L\log(2T)}T\right),\quad O(T^{-1}),\quad
 O\!\left(\frac{X}{qT\ell}\right),\quad
 O\!\left(\frac{\sqrt X}{qT\ell}\right),
\]
and all tend to zero.  Since $\widetilde G_\chi$ is real symmetric,
$\tr\widetilde G_\chi^{\,2}=\|\widetilde G_\chi\|_F^2$.  Dividing
\eqref{eq:second-trace-main} by $a^2L^2$, then using \eqref{eq:Nfam},
$L=\lambda\ell$, and Lemma~\ref{lem:G4}, proves the proposition.
\end{proof}

\section{Inertia and the zero-side counting inequality}\label{sec:zeroside}
For a Hermitian form $Q$, write $n_+(Q)$ for its positive index.

\begin{lemma}[Pull-back of inertia]\label{lem:pullback}
If $Q$ is Hermitian on $\mathbb C^m$ and $A:U\to\mathbb C^m$ is linear,
then $n_+(Q\circ A)\le n_+(Q)$.
\end{lemma}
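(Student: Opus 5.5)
The plan is to use the variational (Courant--Fischer) description of the positive index. For a Hermitian form $P$ on a finite-dimensional space, $n_+(P)$ is the maximal dimension of a subspace on which $P$ is positive definite. Here $U$ should be finite-dimensional, as in the application; if it is not, the same argument applies to every finite-dimensional subspace of $U$.

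The first step is to choose a subspace $V\subset U$ with $\dim V=n_+(Q\circ A)$ on which $Q\circ A$ is positive definite. This means $Q(Av,Av)>0$ for every nonzero $v\in V$.

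The second step is to show that $A$ is injective on $V$. If $v\in V$ and $Av=0$, then $Q(Av,Av)=0$, and positivity forces $v=0$. Hence $\dim A(V)=\dim V$.

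The third step is to check that $Q$ is positive definite on $W=A(V)$. Any nonzero $w\in W$ has the form $w=Av$ with $v\in V$ nonzero, so $Q(w,w)=Q(Av,Av)>0$. By maximality,
\[
 n_+(Q)\ge\dim W=n_+(Q\circ A).
\]

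The only point needing care is the characterization of $n_+$ itself. Diagonalize $Q$ in a unitary eigenbasis. The positive eigenspace is a positive-definite subspace of dimension $n_+(Q)$. Any positive-definite subspace meets the nonpositive eigenspace only in $0$, so its dimension is at most $n_+(Q)$. There is no genuine obstacle beyond this standard fact.
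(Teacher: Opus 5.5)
Your proof is correct and is the same argument as the paper's: you take a subspace on which $Q\circ A$ is positive definite, observe that $A$ is injective there, and note that $Q$ is positive definite on its image. Your extra paragraph justifying the maximal-positive-subspace characterization of $n_+$ is a welcome detail that the paper leaves implicit.
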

\begin{proof}
If $Q\circ A$ is positive definite on $U_0$, then $A|_{U_0}$ is injective
and $Q$ is positive definite on $A(U_0)$.  Thus $\dim U_0\le n_+(Q)$.
\end{proof}

\begin{lemma}[Rank--trace inequality]\label{lem:ranktrace}
Let $P,Q$ be Hermitian, $P\succeq0$, $\operatorname{rank}P\le r$, and
$n_+(Q)\le b$.  Then
\begin{equation}\label{eq:ranktrace}
 \|P+Q\|_F^2\ge2\operatorname{tr}P-r+4\operatorname{tr}Q-4b.
\end{equation}
\end{lemma}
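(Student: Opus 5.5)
The plan is to reduce \eqref{eq:ranktrace} to a variational lower bound for the Frobenius norm. For Hermitian $M$ and any Hermitian $Y$ of the same size, $\|M-Y\|_F^2\ge0$ expands to
\[
 \|M\|_F^2\ge 2\operatorname{tr}(MY)-\|Y\|_F^2 .
\]
With $M=P+Q$, it suffices to find $Y$ such that $\|Y\|_F^2\le r+4b$ and $2\operatorname{tr}(MY)\ge2\operatorname{tr}P+4\operatorname{tr}Q$. The construction of $Y$ should use only two structural facts. First, $P$ is supported on $R=\operatorname{range}P$, with $\dim R\le r$. Second, if $E$ is the span of the eigenvectors of $Q$ with positive eigenvalues, then $\dim E=n_+(Q)\le b$ and $Q\preceq0$ on $E^\perp$.

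Put $V=R+E$, let $\Pi_R$ and $\Pi_V$ be the orthogonal projections onto $R$ and $V$, and let $\Pi_2=\Pi_V-\Pi_R$, which has rank at most $b$. Any $Y$ built from mutually orthogonal projections has Frobenius norm that is easy to compute. I would first try $Y=2\Pi_V-\Pi_R=\Pi_R+2\Pi_2$, for which $\|Y\|_F^2=r+4b$. On the trace side, $P$ is invariant under compression to $R$, so $\operatorname{tr}(P\Pi_R)=\operatorname{tr}(P\Pi_V)=\operatorname{tr}P$. Moreover $V^\perp\subset E^\perp$ gives $\operatorname{tr}((I-\Pi_V)Q)\le0$, that is, $\operatorname{tr}(\Pi_VQ)\ge\operatorname{tr}Q$. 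This is exactly where the pull-back principle (Lemma~\ref{lem:pullback}) enters: the compression of $Q$ to $V^\perp$ has no positive index.

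Expanding with this choice gives
\[
 2\operatorname{tr}(MY)=2\operatorname{tr}P+4\operatorname{tr}(\Pi_VQ)-2\operatorname{tr}(\Pi_RQ).
\]
The first two terms already dominate the target. The only remaining obstruction is the cross term $\operatorname{tr}(\Pi_RQ)$, which can be positive. I expect this to be the main difficulty.

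To handle it, I would exploit the freedom to modify $Y$ on $R$. One option is to take $Y=\Pi_R(I+Q')\Pi_R+2\Pi_2$ with a correction $Q'$ chosen so that the $Q$-contribution on $R$ is counted with weight $2$ rather than $-1$. The extra cost in $\|Y\|_F^2$ would then be paid for by completing a square in the compressed block $\Pi_RQ\Pi_R$. Concretely, on $R$ one has
\[
 \|\Pi_R M\Pi_R\|_F^2\ge 2\operatorname{tr}P+4\operatorname{tr}(\Pi_RQ)-\|\Pi_R(I+2\Pi_RQ\Pi_R)\|_F^2+\dots,
\]
so the cleaner route is probably to split instead. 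Write
\[
 \|M\|_F^2\ge\|\Pi_RM\Pi_R\|_F^2+\|(I-\Pi_R)M(I-\Pi_R)\|_F^2 .
\]
On $R^\perp$ the matrix $M$ reduces to a compression of $Q$, whose positive index is at most $b$ by Lemma~\ref{lem:pullback}. There the scalar inequality $x^2\ge4x-4$, summed over the at most $b$ positive eigenvalues, with the nonpositive eigenvalues discarded using $x^2\ge0\ge4x$, yields $4\operatorname{tr}$ of that block minus $4b$. On $R$ the block is $P+\Pi_RQ\Pi_R$ of size at most $r$, and one needs $x^2\ge2x-1$ together with a treatment of the $Q$-part. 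Making the weights on $\operatorname{tr}(\Pi_RQ\Pi_R)$ come out as $4$ rather than $2$ without violating the count $r+4b$ is the delicate bookkeeping step. If it fails as stated, I would first check whether the intended inequality implicitly uses a weaker split, for instance applying $x^2\ge4x-4$ to the positive eigenvalues of $M$ itself via $n_+(M)\le r+b$, and then adjust the choice of $Y$ accordingly.
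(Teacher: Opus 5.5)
Your proposal stops exactly where the content of the lemma lies: the term $-2\operatorname{tr}(\Pi_RQ)$ is never controlled, and neither of the two remedies you suggest can control it.

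\emph{The block split.} This is not a bookkeeping issue but a loss of information. The inequality $\|M\|_F^2\ge\|\Pi_RM\Pi_R\|_F^2+\|(I-\Pi_R)M(I-\Pi_R)\|_F^2$ discards $2\|\Pi_RQ(I-\Pi_R)\|_F^2$. The block-diagonal part by itself can lie below the right side of \eqref{eq:ranktrace}. Take $P=p\,e_1e_1^T$ with $0<p<1$, $Q=\bigl(\begin{smallmatrix}2&2\\2&2\end{smallmatrix}\bigr)$, and $r=b=1$. The two diagonal blocks give $(p+2)^2+4$, while the target is $2p+11$. Their difference is $(p-1)(p+3)<0$, whereas the full norm $(p+2)^2+12$ is fine. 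Your first choice $Y=\Pi_R+2\Pi_2$ also fails on this example, by exactly $2\operatorname{tr}(\Pi_RQ)=4$, since here $\Pi_V=I$ and nothing compensates.

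\emph{The fallback.} Using $n_+(M)\le r+b$ and $x^2\ge4x-4$ proves only $\|M\|_F^2\ge4\operatorname{tr}P-4r+4\operatorname{tr}Q-4b$. This implies \eqref{eq:ranktrace} only when $\operatorname{tr}P\ge3r/2$. In the application $\operatorname{tr}P_1\le s_1$, so it would replace $3s_1$ by $4s_1$ in \eqref{eq:masterzero} and halve the resulting simple-zero constant $2-1/c_\lambda^*$.

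\emph{The missing idea.} Split $Q$ along its own spectral decomposition instead of compressing it along $R$. Write $Q=Q_+-Q_-$ with $Q_\pm\succeq0$, $Q_+Q_-=0$ and $\operatorname{rank}Q_+\le b$. Then exactly
\[
 \|P+Q\|_F^2=\|P-Q_-\|_F^2+\|Q_+\|_F^2+2\operatorname{tr}(PQ_+),
\]
and $\operatorname{tr}(PQ_+)\ge0$. The overlap of the positive part of $Q$ with $R$, which was your obstruction, now enters with a favourable sign. The bound $\|Q_+\|_F^2\ge4\operatorname{tr}Q_+-4b$ follows from $x^2\ge4x-4$ applied to its at most $b$ nonzero eigenvalues. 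Only $Q_-$ interacts with $P$.

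The paper bounds $\|P-Q_-\|_F^2\ge\sum_i(p_i-n_i)^2\ge2\operatorname{tr}P-r-4\operatorname{tr}Q_-$ using the doubly stochastic (von Neumann) trace inequality. In your variational language, taking $Y=\Pi_R$ for $P-Q_-$ alone already gives the stronger bound $2\operatorname{tr}P-r-2\operatorname{tr}(\Pi_RQ_-)\ge2\operatorname{tr}P-r-2\operatorname{tr}Q_-$. Adding the three pieces yields \eqref{eq:ranktrace}.
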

\begin{proof}
Write $Q=Q_+-Q_-$ with $Q_\pm\succeq0$, $Q_+Q_-=0$, and
$\operatorname{rank}Q_+\le b$.  Pad all eigenvalue lists with zeros.  If
$p_i,n_i$ are the decreasing eigenvalues of $P,Q_-$, then
$\operatorname{tr}(PQ_-)\le\sum_i p_in_i$.  For completeness, choose
orthonormal eigenbases $(x_i)$ and $(y_j)$; then
$\operatorname{tr}(PQ_-)=\sum_{i,j}p_in_j|\langle x_i,y_j\rangle|^2$.
The matrix $(|\langle x_i,y_j\rangle|^2)$ is doubly stochastic, and the
rearrangement inequality gives the displayed bound.
Since
$p_i=0$ for $i>r$,
\begin{align*}
 \|P-Q_-\|_F^2
 &\ge\sum_i(p_i-n_i)^2\\
 &\ge2\sum_{i\le r}(p_i-n_i)-r-4\sum_{i>r}n_i\\
 &\ge2\operatorname{tr}P-r-4\operatorname{tr}Q_-.
\end{align*}
The second line uses $x^2\ge2x-1$ for $x=p_i-n_i$ when $i\le r$ and
$n_i^2\ge-4n_i$ when $i>r$; the last line also uses
$-2n_i\ge-4n_i$ for $n_i\ge0$.  Similarly, applying
$x^2\ge4x-4$ to the at most $b$ nonzero eigenvalues of $Q_+$ gives
$\|Q_+\|_F^2\ge4\operatorname{tr}Q_+-4b$.  Finally,
$Q_+Q_-=0$ and $\operatorname{tr}(PQ_+)\ge0$, so
\[
 \|P+Q\|_F^2
 =\|P-Q_-\|_F^2+\|Q_+\|_F^2+2\operatorname{tr}(PQ_+)
 \ge2\operatorname{tr}P-r+4\operatorname{tr}Q-4b.
\]
\end{proof}

Let $S_1$ be the simple critical-line zeros in $I'$, $S_2$ the distinct
multiple critical-line zeros there, and $\mathcal P$ the unordered pairs
$\{\rho,1-\bar\rho\}$ off the line.  Put
$s_j=|S_j|$ and $p=|\mathcal P|$.  Then
\begin{equation}\label{eq:countbasic}
 N_\chi(I')\ge s_1+2s_2+2p.
\end{equation}
Let $A_\chi$ be the part of $G_\chi$ contributed by the zeros in $I'$ and
$\widehat A_\chi=A_\chi/(aL^2)$.

\begin{proposition}[Hyperbolic blocks and extraction]\label{prop:zeroextract}
One has
\begin{equation}\label{eq:masterzero}
 3s_1+4s_2+4p\ge4\operatorname{tr}\widehat A_\chi-
 \|\widehat A_\chi\|_F^2.
\end{equation}
Consequently
\begin{align}
 s_1&\ge4\operatorname{tr}\widehat A_\chi-
 \|\widehat A_\chi\|_F^2-2N_\chi(I'),\label{eq:simpleextract}\\
 s_1+s_2+p&\ge\frac12\{4\operatorname{tr}\widehat A_\chi-
 \|\widehat A_\chi\|_F^2-N_\chi(I')\}.\label{eq:distinctextract}
\end{align}
\end{proposition}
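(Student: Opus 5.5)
We decompose $\widehat A_\chi=P+Q$ as in Lemma~\ref{lem:ranktrace}, apply that lemma, and then use \eqref{eq:countbasic} to pass to the two consequences. By \eqref{eq:G-two-sides}, $A_\chi=\sum_{\rho:\gamma\in I'}m_\rho u_\rho u_\rho^T$ with $u_\rho=(\widehat\phi(\gamma_\rho-\tau_k))_{0\le k<d}$. We group the terms by zero type. For a simple critical-line zero $\rho\in S_1$, $\gamma_\rho$ is real and $\widehat\phi$ is real on $\mathbb R$, so $u_\rho u_\rho^T$ is real positive semidefinite of rank one. Put $P=(aL^2)^{-1}\sum_{\rho\in S_1}u_\rho u_\rho^T$; then $P\succeq0$ and $\operatorname{rank}P\le s_1$. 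Let $Q$ be the normalized remainder. It splits into two kinds of pieces.

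\emph{Multiple critical-line zeros.} For $\rho\in S_2$ the piece is $m_\rho u_\rho u_\rho^T/(aL^2)$, positive semidefinite of rank one. It contributes at most $1$ to $n_+(Q)$.

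\emph{Off-line pairs.} For a pair $\{\rho,1-\bar\rho\}\in\mathcal P$, \eqref{eq:zero-symmetry} gives $\gamma_{1-\bar\rho}=\overline{\gamma_\rho}$, with equal multiplicities $m$. Since $\overline{\widehat\phi(\bar z)}=\widehat\phi(z)$, we get $u_{1-\bar\rho}=\overline{u_\rho}$. Writing $u=x+iy$ with $x,y$ real, the block is
\[
 m\bigl(uu^T+\bar u\bar u^T\bigr)=2m\bigl(xx^T-yy^T\bigr).
\]
This is a real symmetric form of rank at most two with at most one positive eigenvalue: the hyperbolic block.

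Subadditivity of the positive index, $n_+(Q_1+Q_2)\le n_+(Q_1)+n_+(Q_2)$, follows from the Weyl inequalities or from Lemma~\ref{lem:pullback} applied to the map $v\mapsto(v,v)$ into the direct sum. It gives $n_+(Q)\le s_2+p$. Lemma~\ref{lem:ranktrace} with $r=s_1$ and $b=s_2+p$ then yields
\[
 \|\widehat A_\chi\|_F^2\ge 2\tr P-s_1+4\tr Q-4s_2-4p .
\]

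The main obstacle is that this carries $2\tr P$, whereas \eqref{eq:masterzero} needs $4\tr\widehat A_\chi=4\tr P+4\tr Q$. To close the gap we need $2\tr P\le 2s_1$. Since $\tr P=\sum_{\rho\in S_1}\|u_\rho\|_2^2/(aL^2)$, it suffices to show $\|u_\rho\|_2^2\le aL^2$ for real $\gamma_\rho$. This follows from Lemma~\ref{lem:Poisson}: the full lattice sum $\sum_{k\in\mathbb Z}\widehat\phi(t-\tau_k)^2$ equals $aL^2$, and restricting to $0\le k<d$ only removes nonnegative terms. Hence $2\tr P\ge 4\tr P-2s_1$, and substituting this into the displayed inequality gives \eqref{eq:masterzero}. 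If this route had failed, we would instead apply $x^2\ge4x-4$ also to the eigenvalues of $P$, at a loss in $s_1$. The clean approach is to use the bound $\tr P\le s_1$ as above.

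For \eqref{eq:simpleextract}, rewrite the left side of \eqref{eq:masterzero} as $3s_1+4s_2+4p=s_1+2(s_1+2s_2+2p)$, and use \eqref{eq:countbasic}, namely $s_1+2s_2+2p\le N_\chi(I')$. For \eqref{eq:distinctextract}, rewrite it as $2(s_1+s_2+p)+(s_1+2s_2+2p)$ and bound the second bracket by $N_\chi(I')$ in the same way. Dividing by two gives the claim.
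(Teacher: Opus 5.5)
Your proposal is correct and follows the paper's proof essentially step for step. You split off the simple-zero part $P$ and bound $n_+$ of the remainder by $s_2+p$ using rank-one positive pieces and hyperbolic pair blocks. You then apply the rank--trace lemma, close the $2\tr P$ versus $4\tr P$ gap with $\tr P\le s_1$ from the Poisson identity, and finish with \eqref{eq:countbasic}. The only difference is cosmetic: you write each off-line pair block as $2m(xx^T-yy^T)$ rather than as the pull-back of $\bigl(\begin{smallmatrix}0&m\\m&0\end{smallmatrix}\bigr)$ in the evaluation coordinates $\ell_z,\ell_{\bar z}$, and both express the same signature-$(1,1)$ observation.
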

\begin{proof}
\mbox{}\par\smallskip\noindent\textit{Step 1: identify the block decomposition.}\par

For $z\in\mathbb C$ put
$\ell_z(c)=\sum_{0\le k<d}c_k\widehat\phi(z-\tau_k)$.
A line zero contributes $m_\rho|\ell_{\gamma_\rho}(c)|^2$.  By
\eqref{eq:zero-symmetry}, an off-line pair contributes
\[
 2m_\rho\Re\{\ell_z(c)\overline{\ell_{\bar z}(c)}\}.
\]
In the two evaluation coordinates its matrix is
$\bigl(\begin{smallmatrix}0&m_\rho\\m_\rho&0\end{smallmatrix}\bigr)$ and
has signature $(1,1)$.

\medskip\noindent\textit{Step 2: apply the finite-dimensional inequality.}\par
Write $\widehat A_\chi=P_1+Q'$, with $P_1$ the sum from $S_1$.
Then $P_1\succeq0$, $\operatorname{rank}P_1\le s_1$, and
Lemma~\ref{lem:Poisson} gives $\operatorname{tr}P_1\le s_1$.
The form $Q'$ is the pull-back of the orthogonal sum of $s_2$ positive
one-dimensional forms and $p$ hyperbolic blocks, so
$n_+(Q')\le s_2+p$.  Lemma~\ref{lem:ranktrace} first gives
\begin{align*}
 \|\widehat A_\chi\|_F^2
 &\ge 2\operatorname{tr}P_1-s_1+4\operatorname{tr}Q'-4(s_2+p)\\
 &=4\operatorname{tr}\widehat A_\chi
   -2\operatorname{tr}P_1-s_1-4s_2-4p\\
 &\ge4\operatorname{tr}\widehat A_\chi-3s_1-4s_2-4p,
\end{align*}
where the last line uses $\operatorname{tr}P_1\le s_1$.  This is
\eqref{eq:masterzero}.
\par\medskip\noindent\textit{Step 3: convert block counts into zero counts.}\par
$4s_2+4p\le2N_\chi(I')-2s_1$ proves \eqref{eq:simpleextract};
$3s_1+4s_2+4p\le2(s_1+s_2+p)+N_\chi(I')$ proves
\eqref{eq:distinctextract}.  Since every off-line pair contains two distinct
zeros, $N_{d,\chi}(I')\ge s_1+s_2+2p\ge s_1+s_2+p$, so the latter is also
a valid lower bound for the all-distinct count.
\end{proof}

Put $B_\chi=N_\chi(I'\setminus I)$.  Removing the boundary zeros from the
two displayed bounds gives
\begin{align}
 N^s_{0,\chi}(I)&\ge4\tr\widehat A_\chi-\|\widehat A_\chi\|_F^2
 -2N_\chi(I)-3B_\chi,\label{eq:local-simple}\\
 N_{d,\chi}(I)&\ge\frac12\{4\tr\widehat A_\chi-\|\widehat A_\chi\|_F^2
 -N_\chi(I)\}-\frac32B_\chi.\label{eq:local-distinct}
\end{align}
Since every simple critical-line zero is distinct, the right-hand side of
the first inequality is also a lower bound for the number of distinct
critical-line zeros in $I$.

\section{Completion of the proof of Theorem~\ref{thm:main}}
We first transfer the localized matrix back to the trace matrix, then discard
the boundary layer, and only afterward optimize $\lambda$; no estimate proved
for fixed $\lambda<1$ is used at the endpoint.

By construction $\widehat A_\chi=\widehat G_\chi-\widehat E_\chi$.
The nuclear perturbation satisfies
\[
 |\tr\widehat A_\chi-\tr\widehat G_\chi|\le\|\widehat E_\chi\|_1,
 \qquad
 |\|\widehat A_\chi\|_F^2-\|\widehat G_\chi\|_F^2|
 \le\epsilon(2\|\widehat G_\chi\|_F+\epsilon),\quad \epsilon=(qT)^{-B}.
\]
By Cauchy--Schwarz and Proposition~\ref{prop:T2},
$\sum_\chi\|\widehat G_\chi\|_F\ll q\sqrt{T\ell}$, so the summed perturbation is
$o(\Nfam)$ for large fixed $B$.  The boundary loss is $O(qD_0\ell)=o(\Nfam)$.
Summing \eqref{eq:local-simple} and \eqref{eq:local-distinct}, then applying
Propositions~\ref{prop:T1} and~\ref{prop:T2}, yields
\[
 \frac{\Nfam_0^s}{\Nfam}\ge2-\frac1{c_\lambda^*}-o(1),
 \qquad
 \frac{\sum_{\chi\in\F_q} N_{d,\chi}}{\Nfam}
 \ge\frac12\left(3-\frac1{c_\lambda^*}\right)-o(1).
\]
Since $N^*_{0,\chi}(I)\ge N^s_{0,\chi}(I)$, the first inequality also gives the
stated bound for distinct critical-line zeros.  First let $q\to\infty$ for fixed
$0<\lambda<\lambda_{\rm bw}(T)$, then let
$\lambda\uparrow\lambda_{\rm bw}(T)$.  Continuity of
\eqref{eq:clambda-star-main} proves Theorem~\ref{thm:main}.  In the
full-bandwidth case, direct evaluation gives
\[
 2-\frac1{c_1^*}=\frac32-\frac1{\sqrt2}\cot\frac1{\sqrt2}
 =0.6725007036794116457\ldots,
\]
and the distinct constant is $0.8362503518397058229\ldots$, proving
Corollary~\ref{cor:polylog}.

\section{Conclusion}
Theorem~\ref{thm:main} gives the three family lower bounds for every
Gevrey-admissible growing height, including each fixed
$T=(\log q)^A$ with $A>1$.  Its regime-specific analytic content is the Gevrey
endpoint taper, complex-strip decay, abstract nuclear localization,
growing-height admissibility, and control of finite-sampling end effects.

The public shrinking-height paper \cite{HuaYangShrinking2026} uses the
different package of Hiary--Zhao low-height inputs, exceptional-character
deletion, shrinking-scale Gabor compression, and a complex zero-density tail.
The papers' shared core is explicit-formula finite compression, off-line
hyperbolic blocks, first and second family traces, rank--trace extraction, and
Montgomery--Taylor optimization.  The present article is therefore a
differentiated companion centered on reusable Gevrey/nuclear-localization
technology; it makes no originality claim for the shared finite-inertia
mechanism or the classical Montgomery--Taylor constant.

\section*{Declarations}
\textit{Data availability.} Data sharing is not applicable to this article.

\end{document}